\def\normo#1{\left\|#1\right\|}
\def\abs#1{|#1|}
\def\aabs#1{\left|#1\right|}
\def\half#1{\frac{#1}{2}}
\def\norm#1{\|#1\|}
\newcommand{\N}{{\mathbb N}}
\newcommand{\R}{{\mathbb R}}
\newcommand{\C}{{\mathbb C}}
\newcommand{\Z}{{\mathbb Z}}
\newcommand{\les}{{\lesssim}}
\newcommand{\ges}{{\gtrsim}}
\theoremstyle{plain}
  \newtheorem{theorem}[subsection]{Theorem}
  \newtheorem{proposition}[subsection]{Proposition}
  \newtheorem{lemma}[subsection]{Lemma}
  \newtheorem{corollary}[subsection]{Corollary}
  \newtheorem{conjecture}[subsection]{Conjecture}
\theoremstyle{remark}
  \newtheorem{remark}[subsection]{Remark}
\theoremstyle{definition}
  \newtheorem{definition}[subsection]{Definition}
\begin{document}
\title{Improved Strichartz estimates for a class of dispersive equations in the radial case and their applications to nonlinear \\Schr\"odinger and wave equations}

\author{Zihua Guo$^1$, Yuzhao Wang$^2$
\\{\small $^1$LMAM, School of Mathematical Sciences, Peking University,
Beijing 100871, China}\\{\small \& School of Mathematics, Institute
for Advanced Study, NJ 08540, USA}\\{\small $^2$Department of
Mathematics and Physics, North China Electric Power
University}\\{\small Beijing 102206, China}\\{\small
E-mail:\,zihuaguo@math.pku.edu.cn,\, wangyuzhao2008@gmail.com}}

\date{}

\maketitle


\begin{abstract}
We prove some new Strichartz estimates for a class of dispersive
equations with radial initial data. In particular, we obtain up to
some endpoints the full radial Strichartz estimates for the
Schr\"odinger equation. The ideas of proof are based on Shao's ideas
\cite{Shao} and some ideas in \cite{GPW} to treat the
non-homogeneous case, while at the endpoint we need to use subtle
tools to overcome some logarithmic divergence. We also apply the
improved Strichartz estimates to the nonlinear problems. First, we
prove the small data scattering and large data LWP for the nonlinear
Schr\"odinger equation with radial critical $\dot{H}^s$ initial data
below $L^2$; Second, for radial data we improve the results of the
$\dot{H}^s\times \dot{H}^{s-1}$ well-posedness for the nonlinear
wave equation in \cite{SmithSogge}; Finally, we obtain the
well-posedness theory for the fractional order Schr\"odinger
equation in the radial case.

{\bf Keywords:} Strichartz estimates, radial data, nonlinear
Schr\"odinger equation, nonlinear wave equation
\end{abstract}


\section{Introduction}

In this paper, we study the Cauchy problems for a class of
dispersive equations which are of the following type:
\begin{align}\label{eq:abslinear}
i\partial_tu=-\phi(\sqrt{-\Delta})u+f,\quad u(0,x)=u_0(x),
\end{align}
where $\phi: \mathbb{R}^+\rightarrow \mathbb{R}$ is smooth away from
origin, $u(t,x):\R\times\R^n\rightarrow\C,\ n\geq 2$ is the unknown
function, $f(t,x)$ is the given function (e.g. $f=|u|^pu$ in the
nonlinear setting) and
$\phi(\sqrt{-\Delta})u=\mathscr{F}^{-1}\phi(|\xi|)\mathscr{F}u$.
Here $\mathscr{F}$ denotes the spatial Fourier transform, and
$\phi(|\xi|)$ is usually referred as the dispersion relation of
equation \eqref{eq:abslinear}. Many dispersive equations reduce to
this type, for instance, the Schr\"odinger equation ($\phi(r)=r^2$),
the wave equation ($\phi(r)=r$), the Klein-Gordon equation
($\phi(r)=\sqrt{1+r^2}$), the beam equation
($\phi(r)=\sqrt{1+r^4}$), and the fourth-order Schr\"odinger
equation ($\phi(r)=r^2+r^4$).

In the pioneered work \cite{STR}, Strichartz derived the priori
estimates of the solution to \eqref{eq:abslinear} in space-time norm
$L_t^qL_x^r$ by proving some Fourier restriction inequality. Later,
his results was improved via a dispersive estimate and duality
argument (cf. \cite{KT} and references therein). The dispersive
estimate
\begin{align}\label{eq:decayes}
\norm{e^{it\phi(\sqrt{-\Delta})}u_0}_{X} \lesssim
|t|^{-\theta}\norm{u_0}_{X'}
\end{align}
plays a crucial role, where $X'$ is the dual space of $X$. Applying
\eqref{eq:decayes}, together with a standard argument (cf.
\cite{KT}), we can immediately get the Strichartz estimates. For
instance, one can see from the explicit formula of the free
Schr\"odinger solution that
\[\norm{e^{it\Delta}u_0}_{L_x^\infty}\les
|t|^{-n/2}\norm{u_0}_{L_x^1}.\] In \cite{GPW}, the authors
systematically studied the dispersive estimates for
\eqref{eq:abslinear} by imposing some asymptotic conditions on
$\phi$.

As was explained in \cite{KT}, the full range of the non-retarded
Strichartz estimates for the Schr\"odinger equation were completely
known, while that of the retarded estimates remain open.
Surprisingly, if the initial data $u_0$ is radial, Shao \cite{Shao}
showed that the frequency localized non-retarded Strichartz
estimates for the Schr\"odinger equation allow a wider range. For
example, it was proved that
\begin{align}\label{eq:imschintro}
\norm{e^{it\Delta}P_k u_0}_{L^{q}_{t,x}(\R^{n+1})} \leq C
2^{(\frac{n}{2}-\frac{n+2}{q})k}\norm{u_0}_2
\end{align}
hold if $q>\frac{4n+2}{2n-1}$ and $u_0$ is radial. The proof relies
deeply on the radial assumption which eliminates the bad-type
evolution in the non-radial case (e.g. the Knapp counter-example).
Similar results hold for the wave equation, see \cite{Shao2}. It is
easy to see that equation \eqref{eq:abslinear} is
rotational-invariant, thus it is natural to ask whether one can get
better Strichartz estimates for the radial initial data than that
derived from the dispersive estimate.

The purposes of this paper are: first, to obtain the sharp range of
the type \eqref{eq:imschintro} for the improved Strichartz estimates
for equation \eqref{eq:abslinear} by using Shao's ideas \cite{Shao}
and the ideas in \cite{GPW}. Indeed, we will simplify some proofs
and overcome the difficulty caused by the lack of scaling invariance
by adapting some ideas in \cite{GPW}, moreover, we will prove that
\eqref{eq:imschintro} actually holds for $q=\frac{4n+2}{2n-1}$ by
dealing carefully with some logarithmic divergence; second, to apply
the improved Strichartz estimates to the nonlinear equations,
including nonlinear Schr\"odinger equation, nonlinear wave equation,
and nonlinear fractional-order Schr\"odinger equation. In order to
apply to the nonlinear problems, we will use the Christ-Kiselev
lemma to derive the retarded estimates from the non-retarded
estimates. For example, consider the nonlinear Schr\"odinger
equation
\[iu_t+\Delta u=\mu|u|^pu, \quad u(0,x)=u_0(x),\]
the well-posedness theory of which were deeply studied during the
past decades. We remark that the threshold of the regularity in
$\dot{H}^s$ for the strong well-posedness is $s\geq \max(0,s_c)$,
where $s_c$ is the scaling critical regularity, even in the case
that $L^2$ is subcritical in the sense of scaling. This can be seen
from the Galilean invariance (see \cite{BKPSV,CCT})
\[u(t,x)\rightarrow e^{-i|y|^2t+iy\cdot x}u(t,x-2ty), \quad y\in \R^d.\]
However, it is easy to see that the radial assumption breaks down
the Galilean invariance. Thus it is natural to expect that one may
go below $L^2$ in the radial case. This is indeed the case, which
will be discussed in details in Section 4.

In this paper, we consider the same class of $\phi$ as in
\cite{GPW}. In order to study the non-homogeneous case (e.g.
Klein-Gordon equation), we treat the high frequency and the low
frequency in different scales. As in \cite{GPW}, we will assume
$\phi: \mathbb{R}^+\rightarrow \mathbb{R}$ is smooth and satisfies
some of the following conditions:

(H1) There exists $m_1>0$, such that for any $\alpha \geq 2$ and
$\alpha \in \N$,
\begin{align*}
|\phi'(r)|\sim r^{m_1-1}\ and\ |\phi^{(\alpha)}(r)|\lesssim
r^{m_1-\alpha},\ \  \ r\geq 1.
\end{align*}

(H2)  There exists  $m_2>0$, such that for any $\alpha \geq 2$ and
$\alpha \in \N$,
\begin{align*}
|\phi'(r)|\sim r^{m_2-1}\ and\ |\phi^{(\alpha)}(r)|\lesssim
r^{m_2-\alpha},\ \ 0<r<1.
\end{align*}

(H3) There exists  $ \alpha_1$, such that
\begin{align*}
|\phi''(r)|\sim r^{\alpha_1-2}\ \ r\geq 1.
\end{align*}

(H4) there exists  $\alpha_2$, such that
\begin{align*}
|\phi''(r)|\sim r^{\alpha_2-2}\ \ 0<r<1.
\end{align*}

\begin{remark}\label{rem:malpha}
Heuristically, (H1) and (H3) reflect the dispersive effect in high
frequency. If $\phi$ satisfies (H1) and (H3), then $\alpha_1\leq
m_1$. Similarly, dispersive effect in low frequency is described by
(H2) and (H4). If $\phi$ satisfies (H2) and (H4), then $\alpha_2\geq
m_2$. The special case $\alpha_2= m_2$ happens in the most of time.
\end{remark}

For convenience, given $m_1,m_2,\alpha_1,\alpha_2\in \R$ as in
(H1)-(H4), we denote
\begin{align}\label{eq:malpha}
m(k)=
\begin{cases} m_1, &\text{for } k\ge 0,\\
 m_2, &\text{for } k< 0;
\end{cases}
\quad \text{and} \quad \alpha(k)=
 \begin{cases}  \alpha_1, &\text{for } k\ge 0,\\
  \alpha_2, &\text{for } k< 0.
 \end{cases}
\end{align}
Now we are ready to state our first result:

\begin{theorem}\label{thm:main}
Suppose $n\geq 2$, $k\in \Z$, $\phi: \mathbb{R}^+\rightarrow
\mathbb{R}$ is smooth away from origin, and $u_0$ is spherically
symmetric. If $\phi$ satisfies (H1) and (H2), then for
$\frac{2n}{n-1}<q\leq \infty$ we have
\begin{align}\label{eq:thm1}
\norm{S_\phi(t)P_k u_0}_{L^{q}_{t,x}(\R^{n+1})} \lesssim
2^{(\frac{n}{2}-\frac{n+m(k)}{q})k}\norm{u_0}_2,
\end{align}
Furthermore, if $\phi$ also satisfies (H3) and (H4), then for
$\frac{4n+2}{2n-1}\leq q\leq 6$ we have
\begin{align}\label{eq:thm2}
\norm{S_\phi(t)P_k u_0}_{L^{q}_{t,x}(\R^{n+1})} \les
2^{(\frac{n}{2}-\frac{n+m(k)}{q})k+
(\frac{1}{4}-\frac{1}{2q})(m(k)-\alpha(k))k}\norm{u_0}_2,
\end{align}
where $m(k),\alpha(k)$ are given by \eqref{eq:malpha}, and $P_k$ is
the Littlewood-Paley projector,
$S_\phi(t)=e^{it\phi(\sqrt{-\Delta})}$ is the dispersive group,
which will be defined later. Moreover, the range of $q$ is optimal
in the sense that \eqref{eq:thm1} fails if $q\leq \frac{2n}{n-1}$
and \eqref{eq:thm2} fails if $q<\frac{4n+2}{2n-1}$.
\end{theorem}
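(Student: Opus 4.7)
The approach is the one initiated by Shao \cite{Shao}, refined with the two-scale techniques of \cite{GPW} to accommodate $m_1 \ne m_2$ and $\phi$ not a monomial. Since $u_0$ is radial, $S_\phi(t) P_k u_0$ remains radial and admits the Hankel representation
\[
S_\phi(t) P_k u_0(x) = c_n \int_0^\infty e^{it\phi(\rho)} \wt\chi_k(\rho) \wh u_0(\rho) \, \wt J_\nu(\rho|x|) \, \rho^{n-1} \, d\rho,
\]
where $\wt J_\nu(y):=y^{-\nu} J_\nu(y)$ and $\nu = (n-2)/2$. The first move is the rescaling $\rho = 2^k \sigma$, which concentrates the $\sigma$-integral on $\sigma \sim 1$; assumptions (H1)/(H2) give $|\phi'(2^k\sigma)| \sim 2^{(m(k)-1)k}$ and (H3)/(H4) give $|\phi''(2^k\sigma)| \sim 2^{(\alpha(k)-2)k}$ on this window, so the problem reduces to a model at unit frequency with quantified first and second derivatives, uniformly in $k$ and in whether $k\geq 0$ or $k<0$.

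The next step is the key pointwise kernel bound. The inequality $|\wt J_\nu(y)| \les \min(1, y^{-(n-1)/2})$ produces a trivial $L^\infty_x$ estimate by Cauchy--Schwarz, while in the far region $\rho|x| \ges 1$ one uses the oscillatory asymptotic $\wt J_\nu(y) = c_+ y^{-(n-1)/2} e^{iy} + c_- y^{-(n-1)/2} e^{-iy} + O(y^{-(n+1)/2})$ to rewrite $S_\phi(t) P_k u_0(x)$ as a 1D oscillatory integral in $\sigma$ with phase $t\phi(2^k\sigma) \pm 2^k \sigma |x|$. Van der Corput (using only $|\phi'|$ from (H1)/(H2) for \eqref{eq:thm1}, and the sharper $|\phi''|$ from (H3)/(H4) for \eqref{eq:thm2}) then gives a pointwise decay estimate whose key feature is the angular gain $|x|^{-(n-1)/2}$. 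Combining this with the trivial bound, integrating through the radial identity $\|f\|_{L^q(\R^n)} \approx \|r^{(n-1)/q} f(r)\|_{L^q(dr)}$, and closing with a $TT^*$ step yields \eqref{eq:thm1} for $q > \tfrac{2n}{n-1}$ and the strict-inequality version of \eqref{eq:thm2} on $\tfrac{4n+2}{2n-1} < q \leq 6$.

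The subtle point is the endpoint $q = \tfrac{4n+2}{2n-1}$, where the above dyadic scheme loses a logarithm because the dispersive decay is just critical. To close it I would pass to Lorentz norms: establish a restricted weak-type version of the pointwise dispersive bound on each dyadic shell $\{|x|\sim 2^j\}$ and $\{|t|\sim 2^\ell\}$, then sum the shells through real interpolation rather than directly summing strong-type estimates, exploiting the embedding $L^{q,\tilde q} \hookrightarrow L^q$ for appropriate $\tilde q \le q$ to convert the borderline logarithmic gain into a finite Lorentz-space bound. This is where I expect the hardest technical work to lie, since one must keep track of the precise dyadic contribution of the stationary phase estimate rather than settling for the uniform bound. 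Finally, the optimality of the ranges is established by explicit radial Knapp-type counter-examples: choosing $\wh u_0$ supported in a thin radial annulus of width $\sim 2^{\beta k}$ for a $\beta = \beta(n, m(k), \alpha(k))$ tuned to the suspected critical exponent, $S_\phi(t) P_k u_0$ concentrates on a spacetime slab where its $L^q_{t,x}$ norm saturates \eqref{eq:thm1} or \eqref{eq:thm2}; sending $k\to \infty$ forces the displayed lower bounds on $q$.
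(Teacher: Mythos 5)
Your overall architecture matches the paper: pass to the Hankel/Bessel representation for the radial free solution, rescale to unit frequency so that (H1)--(H4) quantify $\phi'$ and $\phi''$ uniformly in $k$, split the Bessel kernel into the leading oscillatory piece and an error term via the large-argument asymptotics, and exploit the radial norm identity $\|f\|_{L^q(\R^n)}\approx \|r^{(n-1)/q}f(r)\|_{L^q(dr)}$. Your sharpness discussion (thin radial annuli concentrating on a spacetime slab) is exactly the paper's Step 3. For the non-endpoint range the paper is slightly more structured than your sketch: rather than a single pointwise decay plus $TT^*$, it proves a one-dimensional ``local smoothing'' estimate (change of variables in the phase followed by Hausdorff--Young, giving $L^2_t$ and $L^\infty_t$ bounds on each dyadic shell) and a separate one-dimensional $L^6_{t,r}$ Strichartz estimate (via the decay bound from (H3)/(H4) and the standard Keel--Tao machinery), then interpolates these three on each shell $A_j$ and sums; the summation converges precisely when $q>\tfrac{4n+2}{2n-1}$. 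Your Van der Corput $+\,TT^*$ route would plausibly reproduce this range, so this part is a stylistic difference.

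The genuine gap is at the endpoint $q=\tfrac{4n+2}{2n-1}$. You propose to sum the dyadic shells through real interpolation in Lorentz spaces, but interpolating a family of estimates whose exponents straddle the critical one naturally produces a \emph{weak}-type $L^{q,\infty}$ bound, not the strong $L^q$ bound claimed in \eqref{eq:thm2}; $L^{q,\infty}\hookrightarrow L^q$ is false, and the direction of embedding you invoke, $L^{q,\tilde q}\hookrightarrow L^q$ with $\tilde q\leq q$, requires you to first establish the \emph{improved} Lorentz estimate $L^{q,\tilde q}$ with $\tilde q<q$, which is exactly what the dyadic scheme fails to give at the critical exponent. The paper sidesteps this by \emph{not} decomposing in $j$ at the endpoint; instead it runs a global $TT^*$ argument. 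After rescaling the phase and applying Van der Corput (now to the second derivative, with the $2^{k(m(k)-\alpha(k))/2}$ factor recording the loss when $\alpha(k)\ne m(k)$), the $TT^*$ kernel becomes a convolution with $|x-y|^{-(1/2-1/q)}$ sandwiched between the radial weights $|x|^{(1/q-1/2)(n-1)}$ and $|y|^{(1/q-1/2)(n-1)}$. The crucial closing tool is the \emph{double-weight Hardy--Littlewood--Sobolev (Stein--Weiss) inequality} in dimension one, which absorbs exactly the borderline radial weights and eliminates the logarithmic divergence in one stroke. Without this (or an equivalent weighted fractional integration device), the endpoint strong-type bound does not follow from your outline; you should replace the Lorentz-interpolation step with the weighted $TT^*$ argument and Stein--Weiss.
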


For the Schr\"odinger equation, $\phi(r)=r^2$ and satisfies
(H1)-(H4) with $m(k)=\alpha(k)=2$, then it follows immediately from
Theorem \ref{thm:main} that

\begin{corollary}\label{cor:sch}
Assume $n\geq 2$, $k\in \Z$, $\frac{4n+2}{2n-1}\leq q\leq \infty$.
Then there exists $C>0$ such that for $u_0\in L^2(\R^n)$ and $u_0$
is spherically symmetric, one has
\begin{align}\label{eq:cor}
\norm{e^{it\Delta}P_k u_0}_{L^{q}_{t,x}(\R^{n+1})} \leq C
2^{(\frac{n}{2}-\frac{n+2}{q})k}\norm{u_0}_2,
\end{align}
and the range of $q$ is optimal in the sense that \eqref{eq:cor}
fails if $q<\frac{4n+2}{2n-1}$.
\end{corollary}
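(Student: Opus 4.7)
The plan is to apply Theorem \ref{thm:main} directly, since $\phi(r)=r^2$ is about as well behaved a dispersion relation as one could hope for. First I would verify that all four hypotheses (H1)--(H4) are satisfied with the uniform exponents $m_1=m_2=\alpha_1=\alpha_2=2$. This is immediate: $\phi'(r)=2r$, $\phi''(r)=2$, and $\phi^{(\alpha)}(r)\equiv 0$ for $\alpha\ge 3$, so the asymptotic comparisons $|\phi'(r)|\sim r$ and $|\phi''(r)|\sim 1$ hold globally on $(0,\infty)$, while the higher-order decay bounds required by (H1)--(H2) are trivial. In the notation of \eqref{eq:malpha} this gives $m(k)=\alpha(k)=2$ for every $k\in\Z$.

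Next I would read off the estimate. Because $m(k)-\alpha(k)=0$, the extra factor $(\tfrac{1}{4}-\tfrac{1}{2q})(m(k)-\alpha(k))k$ in the exponent of \eqref{eq:thm2} collapses to zero, so both \eqref{eq:thm1} and \eqref{eq:thm2} reduce to exactly the bound in \eqref{eq:cor}. To obtain the full claimed range it then suffices to check that the two intervals $(\tfrac{2n}{n-1},\infty]$ and $[\tfrac{4n+2}{2n-1},6]$ furnished by Theorem \ref{thm:main} have union $[\tfrac{4n+2}{2n-1},\infty]$. For $n\ge 2$ a short arithmetic check gives
\[
\frac{4n+2}{2n-1}<\frac{2n}{n-1}<6,
\]
so the two ranges overlap and together cover everything asserted.

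Optimality is inherited verbatim from the optimality clause of Theorem \ref{thm:main}: the failure of \eqref{eq:cor} for $q<\tfrac{4n+2}{2n-1}$ is exactly the failure of \eqref{eq:thm2} with $\phi(r)=r^2$. There is no real obstacle here; the corollary is a bookkeeping specialization of the main theorem. The only subtlety worth flagging is that the endpoint $q=\tfrac{4n+2}{2n-1}$ is genuinely included rather than excluded, which is precisely what the closed endpoint in \eqref{eq:thm2} (itself the hardest part of Theorem \ref{thm:main}, handling the logarithmic divergence) buys us.
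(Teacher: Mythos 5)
Your proposal is correct and matches the paper's approach exactly: the paper also obtains Corollary~\ref{cor:sch} as an immediate consequence of Theorem~\ref{thm:main} after noting that $\phi(r)=r^2$ satisfies (H1)--(H4) with $m(k)=\alpha(k)=2$, so the correction term in \eqref{eq:thm2} vanishes and the ranges $[\tfrac{4n+2}{2n-1},6]$ and $(\tfrac{2n}{n-1},\infty]$ combine to cover $[\tfrac{4n+2}{2n-1},\infty]$. Your check that $\tfrac{2n}{n-1}<6$ (ensuring no gap between the two regimes) and the observation that the optimality counterexample in Step~3 of the paper's proof of Theorem~\ref{thm:main} is itself the Schr\"odinger one are both exactly what is needed.
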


\begin{remark}
Shao \cite{Shao} proved \eqref{eq:cor} for $q>\frac{4n+2}{2n-1}$.
For the wave equation, $\phi(r)=r$ and satisfies (H1)-(H2) with
$m(k)\equiv 1$, then \eqref{eq:thm1} reduces to the one given in
\cite{Shao2}. Interestingly, the range $q>\frac{2n}{n-1}$ is optimal
for the wave equation. It is worth noting that if
$q>\frac{2n}{n-1}$, \eqref{eq:thm1} gives better bound than
\eqref{eq:thm2} since $k[m(k)-\alpha(k)]\geq 0$ in view of Remark
\ref{rem:malpha}.
\end{remark}

We will apply Theorem \ref{thm:main} to some concrete equations.
Then using Christ-Kiselev lemma, we get the retarded Strichartz
estimates. In view of the classical Strichartz estimates, it is
natural to ask the sharp range of the mixed Strichartz estimates:
\[\norm{S_\phi(t)P_k u_0}_{L^{q}_tL^r_x(\R^{n+1})} \lesssim
C(k)\norm{u_0}_2.\] For this purpose, we restrict ourselves to the
simple case $\phi(r)=r^a$, $a>0$, namely, we consider the following
estimates
\begin{align}\label{eq:Stri}
\norm{e^{itD^a}P_kf}_{L_t^qL_x^r(\R\times \R^n)}\leq
C2^{k(\frac{n}{2}-\frac{a}{q}-\frac{n}{r})} \norm{f}_{L_x^2(\R^n)},
\end{align}
where $D=\sqrt{-\Delta}$, $a>0$. In this case, we have scaling
invariance, thus the proof is less complicated but still can be
adapted to the general case. We prove the following:
\begin{theorem}\label{thm:mainSch}
(a) Assume $a=1$ and $n\geq 3$. \eqref{eq:Stri} hold for all radial
functions $f\in L^2(\R^n)$ if and only if
\[(q,r)=(\infty,2)\quad or \quad 2\leq q \leq \infty, \frac{1}{q}+\frac{n-1}{r}<\frac{n-1}{2}.\]

(b) Assume $0<a\ne 1$ and $n\geq 2$. \eqref{eq:Stri} hold for all
radial functions $f\in L^2(\R^n)$ if
\[\frac{4n+2}{2n-1}\leq q\leq \infty, \frac{2}{q}+\frac{2n-1}{r}\leq n-\frac{1}{2} \quad or \quad 2\leq q<\frac{4n+2}{2n-1}, \frac{2}{q}+\frac{2n-1}{r}< n-\frac{1}{2}.\]
On the other hand, \eqref{eq:Stri} fail if $q>2$ or
$\frac{2}{q}+\frac{2n-1}{r}>n-\frac{1}{2}$.
\end{theorem}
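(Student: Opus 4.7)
The strategy is to bootstrap from the diagonal estimates of Theorem~\ref{thm:main} (applied to $\phi(r)=r^a$) via three standard devices: the spatial Bernstein inequality $\|P_kg\|_{L^r_x}\lesssim 2^{kn(1/q-1/r)}\|P_kg\|_{L^q_x}$ for $q\le r$, the trivial energy estimate $\|e^{itD^a}P_kf\|_{L^\infty_tL^2_x}\le\|f\|_2$, and convex interpolation in the $(1/q,1/r)$-plane. For $\phi(r)=r^a$ one has $m(k)=\alpha(k)=a$ when $a\ne 1$, so \eqref{eq:thm1} and \eqref{eq:thm2} collapse to the single diagonal bound $\|e^{itD^a}P_kf\|_{L^q_{t,x}}\lesssim 2^{k(n/2-(n+a)/q)}\|f\|_2$ valid on $q\ge\frac{4n+2}{2n-1}$; in the wave case $a=1$ only \eqref{eq:thm1} applies and yields the diagonal on the open range $q>\frac{2n}{n-1}$. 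The $2^k$ exponent appearing in \eqref{eq:Stri} is pinned down by the scaling $(e^{itD^a}[f(\lambda\cdot)])(x)=(e^{i\lambda^atD^a}f)(\lambda x)$, so only the admissible range of $(q,r)$ is at issue.

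Bernstein immediately upgrades each diagonal bound to the full $L^q_tL^r_x$ estimate for every $r\ge q$ with the scaling-correct factor $2^{k(n/2-a/q-n/r)}$. Interpolating each such Bernstein-extended pair with the energy endpoint $(1/q,1/r)=(0,1/2)$ sweeps out a convex region in the $(1/q,1/r)$-plane, and a direct computation shows that its outer boundary is precisely the line $\frac{2}{q}+\frac{2n-1}{r}=n-\frac{1}{2}$ in case (b), respectively $\frac{1}{q}+\frac{n-1}{r}=\frac{n-1}{2}$ in case (a). Complex interpolation with the trivial energy bound preserves the closed diagonal endpoint, yielding the closed inequality for $q\ge\frac{4n+2}{2n-1}$ in (b). To cover the remaining strips---$2\le q<\frac{4n+2}{2n-1}$ for (b) and $2\le q\le\frac{2n}{n-1}$ for (a)---I would further interpolate with the classical (non-radial) Strichartz estimates from \cite{GPW} in case (b), respectively with the Keel--Tao wave Strichartz on the line $\frac{2}{q}+\frac{n-1}{r}=\frac{n-1}{2}$ in case (a); since these classical estimates already sit strictly inside the claimed target triangles, the strict inequality stated in the theorem comes out automatically. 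The endpoint $(\infty,2)$ of (a) is simply the energy estimate.

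Sharpness follows from scaling together with the already-established diagonal sharpness in Theorem~\ref{thm:main}: testing \eqref{eq:Stri} against the same thin-annulus radial bumps that saturate Theorem~\ref{thm:main} rules out any $(q,r)$ outside the claimed region, while scaling fixes the $2^k$ power. The main obstacle is the low-dimensional corner of part (a), where the classical Keel--Tao wave endpoint $(q,r)=(2,\infty)$ is forbidden in $n=3$ and the convex-hull argument breaks down on the line $q=2$; this forces a small adaptation, either by interpolating only among pairs with $q>2$ and passing to a limit, or by invoking a radial-specific endpoint refinement. A secondary technical point is checking that complex interpolation with the trivial energy bound really preserves the closed endpoint $q=\frac{4n+2}{2n-1}$ in (b), which is routine since one of the two interpolation endpoints is the unweighted $L^\infty_tL^2_x$ estimate.
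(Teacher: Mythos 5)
Your interpolation scheme correctly recovers the closed region $\{q\ge\frac{4n+2}{2n-1},\ \frac{2}{q}+\frac{2n-1}{r}\le n-\frac12\}$ in part (b): the diagonal endpoint $D'=(\frac{2n-1}{4n+2},\frac{2n-1}{4n+2})$ from Theorem~\ref{thm:main}, pushed down by Bernstein and convexified with the energy point $(0,\frac12)$, does have the line $\frac{2}{q}+\frac{2n-1}{r}=n-\frac12$ as its outer boundary. The $2^k$ power is indeed fixed by scaling, and the sharpness argument via the thin-annulus bumps is in the spirit of the paper's.

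The gap is in the strip $2\le q<\frac{4n+2}{2n-1}$ (and $2\le q\le\frac{2n}{n-1}$ in part (a)). Your plan is to interpolate with the classical Strichartz estimates, but that cannot reach the claimed region — interpolation never exceeds the convex hull of the regions where you have estimates. At the extreme slice $q=2$ the only input you possess with $\frac{1}{q}=\frac12$ is the classical endpoint $r=\frac{2n}{n-2}$ (i.e.\ $\frac1r=\frac{n-2}{2n}$), whereas the theorem asserts the estimate for all $\frac1r<\frac{2n-3}{4n-2}$, and $\frac{n-2}{2n}<\frac{2n-3}{4n-2}$ strictly for every $n\ge2$. Every other point in your inventory has $\frac1q<\frac12$, so the convex hull at $\frac1q=\frac12$ collapses to the classical segment and you cannot move upward from it by convexity. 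The same phenomenon occurs throughout the strip: the segment $C'D'$ (or $CD$ for $a=1$) lies strictly outside the convex hull of the diagonal region, the Bernstein cone, the energy point, and the classical admissible line. Your sentence ``since these classical estimates already sit strictly inside the claimed target triangles, the strict inequality stated in the theorem comes out automatically'' inverts the logic: the classical estimates being strictly inside means they are weaker, not that interpolation reaches the larger region.

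What the paper actually does here is different in kind, not just in bookkeeping. It reduces to proving the $L^2_tL^r_x$ bound directly for $\frac{4n-2}{2n-3}<r<\frac{2n}{n-2}$ (the exact range missing from the convex hull), by dyadically decomposing in $|x|$, using the Bessel asymptotics to isolate the main term, and estimating that main term on each annulus by a \emph{maximal-function} bound: Lemma~\ref{lem:max} gives $\|\int e^{it|\xi|^a}e^{ix\xi}\eta_0(\xi/2^k)f\,d\xi\|_{L^2_xL^\infty_{|t|\lesssim1}}\lesssim 2^{ak/4}\|f\|_2$ for $a\ne1$ (and $2^{k/2}$ for $a=1$). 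This local-in-time $L^2_xL^\infty_t$ smoothing input, interpolated against the $L^2_{t,r}$ local smoothing on annuli, is what produces the extra decay in $j$ allowing summation over annuli when $q=2$. That ingredient has no counterpart in your proposal, and without it the strip near $q=2$ is out of reach.
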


\begin{remark}
The range of $(q,r)$ is indicated in Figure 1, where
$B=(\frac{n-3}{2n-2}, \frac{1}{2})$,
$C=(\frac{n-3}{2n-2},\frac{1}{2})$,
$D=(\frac{n-1}{2n},\frac{n-1}{2n})$, $B'=(\frac{n-2}{2n},
\frac{1}{2})$, $C'=(\frac{2n-3}{4n-2}, \frac{1}{2})$,
$D'=(\frac{2n-1}{4n+2},\frac{2n-1}{4n+2})$.
\begin{figure}
\begin{center}
\includegraphics{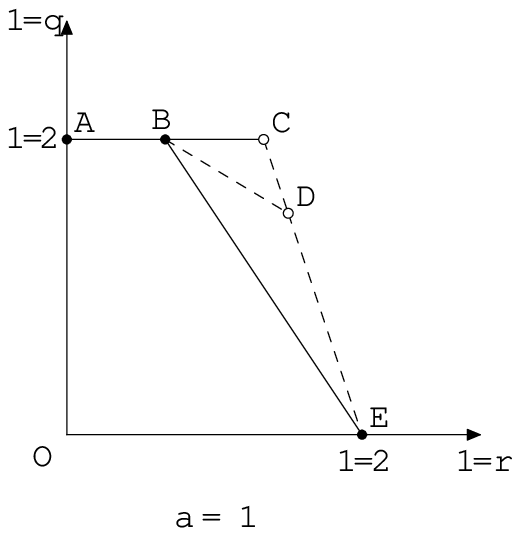}\quad\quad\quad \includegraphics{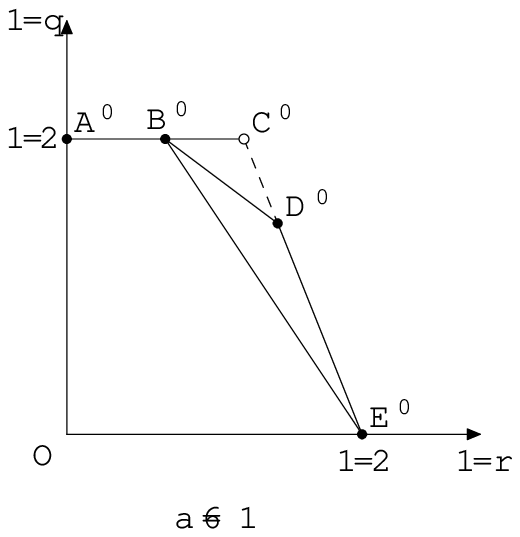}
\end{center}
\caption{Range of $(q,r)$ for \eqref{eq:Stri}}
\end{figure}
The results for the wave equation ($a=1$) are not new. The positive
results were due to \cite{KM, Sogge, Sterbenz, FangWang2}. The
counter-example was given in \cite{HiKu}.

On the other hand, for the Schr\"odinger equation, the results seem
to be new. We see that the picture is almost complete, except that
the segment $C'D'$ is unknown. In view of the positive results on
the segment $D'E'$, we conjecture that \eqref{eq:Stri} holds on the
segment $C'D'$, which is equivalent to the following
\begin{conjecture}\label{conj}
Assume $n\geq 2$ and $0<a\ne 1$. Then
\begin{align}\label{eq:conj}
\norm{e^{itD^a}P_0f}_{L_t^2L_x^{\frac{4n-2}{2n-3}}(\R\times
\R^n)}\leq C \norm{f}_{L_x^2(\R^n)},
\end{align}
holds for all radial function $f\in L_x^2(\R^n)$.
\end{conjecture}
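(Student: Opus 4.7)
The plan is to attack Conjecture \ref{conj} via a $TT^*$ reduction together with an endpoint bilinear interpolation adapted to the radial setting. By $TT^*$, the conjecture is equivalent to the bilinear estimate
\[B(F,G):=\iint \langle e^{i(t-s)D^a}P_0^2 F(s), G(t)\rangle\, ds\, dt \lesssim \|F\|_{L^2_t L^{r'}_x}\|G\|_{L^2_t L^{r'}_x}\]
for radial $F,G$, with $r=(4n-2)/(2n-3)$. Decomposing dyadically with $|t-s|\sim 2^j$ gives $B=\sum_j B_j$. The main input for controlling each $B_j$ is the sharp radial dispersive estimate
\[\|e^{i\tau D^a}P_0 f\|_{L^r_x} \lesssim (1+|\tau|)^{-n(1/2-1/r)}\|f\|_{L^{r'}_x},\]
which for $\phi(\xi)=|\xi|^a$ ($a\ne 1$) and radial $f$ I would derive from the Bessel-function representation of the kernel on radial inputs: one extracts the pointwise bound $|K_\tau(x)|\lesssim |\tau|^{-1/2}|x|^{-(n-1)/2}$ on the stationary-phase shell, together with fast decay away from it via integration by parts in the Hankel variable, and then integrates in $x$.

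The decay exponent above matches the critical threshold at which the naive Young/HLS summation of the $B_j$ in time fails exactly by a logarithm, which is the endpoint character of $q=2$. To close this, I would adapt the Keel-Tao bilinear interpolation: establish a family of off-diagonal bilinear bounds
\[|B_j(F,G)|\lesssim 2^{-j\beta(a_1,a_2)}\|F\|_{L^2_t L^{a_1'}_x}\|G\|_{L^2_t L^{a_2'}_x},\quad \beta(a_1,a_2)>0,\]
for exponent pairs $(a_1, a_2)$ on the critical scaling line $2/q+(2n-1)/r=n-1/2$ but slightly displaced from the diagonal $(r,r)$. These off-diagonal estimates should follow by interpolating the sharp radial diagonal estimate of Theorem \ref{thm:main} (already established at the Stein-Tomas exponent $q=r=(4n+2)/(2n-1)$) against the trivial energy bound at $(q,r)=(\infty,2)$; real interpolation in Lorentz spaces would then upgrade the summable off-diagonal pieces to the endpoint at $(q,r)=(2,(4n-2)/(2n-3))$.

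The principal obstacle is precisely this endpoint summation. In the classical Keel-Tao framework the endpoint $q=2$ is available only at the unique admissible pair $(2,2n/(n-2))$, and the conjecture asserts that the radial assumption extends admissibility along the entire critical line down to $q=2$. For the bilinear interpolation to succeed one needs a \emph{strictly} positive off-diagonal exponent $\beta(a_1, a_2)>0$ in a punctured neighborhood of the diagonal along the critical line; whether naive interpolation of the Shao-type estimate against the energy bound delivers this positivity is not clear, and a genuinely new ingredient --- for example a bilinear refinement of Theorem \ref{thm:main} exploiting angular orthogonality on the Hankel-transform side, or a wave-packet decomposition compatible with the spherical symmetry --- seems to be required. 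This is consistent with the statement being left open in the paper.
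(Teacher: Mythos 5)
This statement is labeled a \emph{conjecture} in the paper, and the authors do not prove it; there is therefore no paper proof to compare your attempt against. You correctly diagnose this: your proposal is an honest non-proof that identifies the endpoint obstruction and concludes that a new ingredient would be required, which is exactly the paper's position. Your route --- $TT^*$ reduction, dyadic decomposition in $|t-s|$, and a Keel--Tao-style bilinear interpolation along the critical line $\frac{2}{q}+\frac{2n-1}{r}=n-\frac{1}{2}$, seeking a strictly positive off-diagonal gain $\beta>0$ --- is the standard endpoint machinery, and the gap you flag (whether interpolating the Shao-type diagonal estimate against the $(\infty,2)$ energy bound actually yields $\beta>0$ near the diagonal) is a plausible and concrete reason why this approach stalls.

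The paper's own discussion of the obstruction is organized differently but arrives at the same logarithmic wall. Rather than decomposing dyadically in $|t-s|$, the authors decompose in the spatial variable $|x|\sim 2^j$, observe that the per-annulus bound
\[
\norm{e^{itD^a}P_0f}_{L_t^2L_x^{\frac{4n-2}{2n-3}}(\R\times\{|x|\sim 2^j\})}\leq C\norm{f}_{L^2}
\]
holds with constant uniform in $j$, and note that summation over $j$ fails by exactly a logarithm. In Remark \ref{rem:conj} (for $a=2$) they further reduce the conjecture, via the Bessel asymptotics and the split into main and error terms, to a one-dimensional weighted oscillatory-integral estimate
\[
\normo{r^{-1/(2n-1)}\int_\R \psi_0(s)g(s)e^{i(rs-ts^2)}\,ds}_{L_t^2L_{\{r\geq 1\}}^{\frac{4n-2}{2n-3}}}\les \norm{g}_2,
\]
which they cannot prove. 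So whereas you localize in time and appeal to dispersive decay plus bilinear interpolation, the paper localizes in space and reduces to a weighted one-dimensional restriction/extension estimate; both reformulations expose the same endpoint failure, and in both cases the missing ingredient is a genuinely new estimate rather than a reorganization of the existing ones. Your closing paragraph is the right reading of the situation.
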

This is very similar to the endpoint Strichart estimates in the
non-radial case that was studied in \cite{KT}. As is expected,
\eqref{eq:conj} is just ``logarithmically far" to be proved. Indeed,
we have for any $j\in \N$
\begin{align*}
\norm{e^{itD^a}P_0f}_{L_t^2L_x^{\frac{4n-2}{2n-3}}(\R\times
\{|x|\sim 2^j\})}\leq C \norm{f}_{L_x^2(\R^n)}.
\end{align*}
However, we can not adapt the method on $D'E'$ to overcome this
logarithmical divergence. See Remark \ref{rem:conj} below for more
discussions on \eqref{eq:conj}.
\end{remark}

Using these Strichartz estimates, we study the nonlinear problems
and prove some new results. For example, for the nonlinear
Schr\"odinger equation, we prove the following

\begin{theorem}\label{thm:iSch}
Assume $n\geq 2$, $0<p<4/n$, $s_{sch}=\frac{n}{2}-\frac{2}{p}$,
$\frac{1-n}{2n+1}\leq s_{sch}<0$, and $u_0$ is radial. If
$\norm{u_0}_{\dot{H}^{s_{sch}}}\leq \delta$ for some $\delta\ll 1$,
then there exists a unique global solution $u$ to
\[iu_t+\Delta u=\mu|u|^pu, \quad u(0,x)=u_0(x),\]
where $\mu=\pm 1$, such that $u\in C(\R:\dot{H}^{s_{sch}})\cap
L_{t,x}^{\frac{p(n+2)}{2}}(\R\times \R^n)$. Moreover, there exist
$u_{\pm}\in \dot{H}^{s_{sch}}$ such that
$\norm{u-S(t)u_{\pm}}_{\dot{H}^{s_{sch}}}\rightarrow 0$, as
$t\rightarrow \pm \infty$.
\end{theorem}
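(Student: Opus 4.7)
The plan is to close a contraction mapping for the Duhamel map
\[
\Phi(u)(t) = e^{it\Delta}u_0 - i\mu\int_0^t e^{i(t-s)\Delta}|u|^pu(s)\,ds
\]
in the radial subspace of $X := C_b(\R;\dot{H}^{s_{sch}})\cap L^{q}_{t,x}(\R\times\R^n)$, where $q := p(n+2)/2$. The identity $s_{sch}=n/2-2/p=n/2-(n+2)/q$ is exactly what makes Corollary \ref{cor:sch} read as a Strichartz estimate sending $\dot{H}^{s_{sch}}$-data to the scattering norm $L^q_{t,x}$, and the hypothesis $s_{sch}\geq(1-n)/(2n+1)$ is precisely the condition $q\geq(4n+2)/(2n-1)$ under which that corollary applies. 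Writing $q_1:=q/(p+1)$ for the Hölder exponent forced by the nonlinearity, one computes $q_1'=p(n+2)/(pn-2)$ and checks that $n/2-(n+2)/q_1'=-s_{sch}$, so that $q_1'$ also lies in the admissible range for Corollary \ref{cor:sch} (automatic since $s_{sch}<0$). Thus both the target exponent $q$ and the ``conjugate'' exponent $q_1'$ fall under the improved radial Strichartz estimate.

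The linear estimate is immediate: since $q\geq 2$, the Littlewood--Paley square function together with Minkowski's inequality gives
\[
\|e^{it\Delta}u_0\|_{L^q_{t,x}}^2 \les \sum_{k\in\Z}\|e^{it\Delta}P_k u_0\|_{L^q_{t,x}}^2 \les \sum_{k\in\Z}2^{2s_{sch}k}\|P_k u_0\|_2^2 \sim \|u_0\|_{\dot{H}^{s_{sch}}}^2,
\]
where the middle inequality is Corollary \ref{cor:sch} applied frequency-by-frequency. The heart of the argument is the non-retarded estimate
\[
\Big\|\int_\R e^{i(t-s)\Delta}F(s)\,ds\Big\|_{L^q_{t,x}} \les \|F\|_{L^{q_1}_{t,x}}\qquad(F\text{ radial}),
\]
which I would prove by factoring the left-hand side as $e^{it\Delta}\psi$ with $\psi:=\int e^{-is\Delta}F(s)\,ds$ (itself radial), and applying Corollary \ref{cor:sch} at exponent $q$ to get $\|e^{it\Delta}\psi\|_{L^q_{t,x}}\les\|\psi\|_{\dot{H}^{s_{sch}}}$. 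To control $\|\psi\|_{\dot{H}^{s_{sch}}}$ I dualize: for any radial $g$ with $\|g\|_{\dot{H}^{-s_{sch}}}=1$,
\[
|\langle\psi,g\rangle| = \Big|\int\langle F(s),e^{is\Delta}g\rangle_x\,ds\Big| \leq \|F\|_{L^{q_1}_{t,x}}\|e^{it\Delta}g\|_{L^{q_1'}_{t,x}} \les \|F\|_{L^{q_1}_{t,x}},
\]
where the last step is Corollary \ref{cor:sch} at exponent $q_1'$ with regularity $-s_{sch}$. Since $q>q_1$, the Christ--Kiselev lemma upgrades this to the retarded Duhamel operator $\int_0^t e^{i(t-s)\Delta}F\,ds$.

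Combined with Hölder's identity $\||u|^pu\|_{L^{q_1}_{t,x}}=\|u\|_{L^q_{t,x}}^{p+1}$, this yields $\|\Phi(u)\|_{L^q_{t,x}}\les\|u_0\|_{\dot{H}^{s_{sch}}}+\|u\|_{L^q_{t,x}}^{p+1}$ together with the matching Lipschitz bound on differences, so $\Phi$ contracts on a ball of radius $O(\delta)$ in $L^q_{t,x}$ among radial functions as soon as $\delta$ is small. Continuity in $\dot{H}^{s_{sch}}$ follows for the linear part by unitarity and for the Duhamel part via the identity $\|\int_0^t e^{i(t-s)\Delta}F\,ds\|_{\dot{H}^{s_{sch}}}=\|\int_0^t e^{-is\Delta}F\,ds\|_{\dot{H}^{s_{sch}}}$ combined with the dual Strichartz bound and Christ--Kiselev. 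Scattering follows by setting $u_\pm:=u_0-i\mu\int_0^{\pm\infty}e^{-is\Delta}|u|^pu\,ds$, which lies in $\dot{H}^{s_{sch}}$ by the same dual Strichartz argument, and noting $\|u(t)-e^{it\Delta}u_\pm\|_{\dot{H}^{s_{sch}}}\to 0$ since the tail $\|\mathbf{1}_{|s|>|t|}|u|^pu\|_{L^{q_1}_{t,x}}\to 0$ by dominated convergence. The main delicate point is the simultaneous admissibility of Corollary \ref{cor:sch} at the two exponents $q$ and $q_1'$: both land in $[(4n+2)/(2n-1),\infty]$ precisely under the stated threshold $s_{sch}\geq(1-n)/(2n+1)$, and it is this coincidence that allows the dual Strichartz step, and hence the whole fixed-point scheme, to close below $L^2$.
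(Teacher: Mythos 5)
Your proposal is correct and reproduces the paper's argument in essence: both use the frequency-localized radial Strichartz bound (Corollary \ref{cor:sch}) summed via Littlewood--Paley and Minkowski, a $TT^*$/duality step to obtain the inhomogeneous estimate, Christ--Kiselev to pass to the retarded Duhamel operator, and a small-data fixed point in $L^q_{t,x}$ with $q=p(n+2)/2$; your dual exponent $q_1'=p(n+2)/(pn-2)$ coincides with the paper's $\tilde q=\tilde r=2(n+2)/(n+2s_{sch})$. The only difference is one of packaging: you unfold the retarded Strichartz estimate inside the proof, whereas the paper proves it once as Proposition \ref{prop:StriSch} (via Lemmas \ref{lem:radialdual}--\ref{lem:duallinear} and Christ--Kiselev) and then applies it.
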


The index $\frac{1-n}{2n-1}$ is sharp for the critical GWP by our
methods. We actually obtain more results, see Theorem
\ref{thm:Schwp} below. For the nonlinear wave equation, we prove the
following
\begin{theorem}\label{thm:iWavewp}
Assume $n\geq 2$, $0<p<\frac{4}{n-1}$,
$s_{w}=\frac{n}{2}-\frac{2}{p}$, $\frac{1}{2n}<s_{w}<1/2$, and $u_0$
is radial. If
$\norm{u_0}_{\dot{H}^{s_{w}}}+\norm{u_1}_{\dot{H}^{s_w-1}}\leq
\delta$ for some $\delta\ll 1$, then there exists a unique global
solution $u$ to
\begin{align*}
&\partial_{tt}u-\Delta u=\mu|u|^pu,\quad (t,x)\in \R^{n+1},\\
&u(0)=u_0(x),\ u_t(0)=u_1(x),
\end{align*}
where $\mu=\pm 1$, such that $u\in C(\R:\dot{H}^{s_{w}})\cap
C^1(\R:\dot{H}^{s_w-1})\cap L_{t,x}^{\frac{2n+2}{n-2s_w}}(\R\times
\R^n)$, and there exist $(u_{\pm},v_\pm)\in \dot{H}^{s_{w}}\times
\dot{H}^{s_w-1}$ such that
$\norm{u-W'(t)u_{\pm}}_{\dot{H}^{s_w}}+\norm{u_t-W(t)v_{\pm}}_{\dot{H}^{s_w-1}}\rightarrow
0, \mbox{ as } t\rightarrow \pm \infty$.
\end{theorem}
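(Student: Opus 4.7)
For the wave equation $\phi(r)=r$ one has $m(k)\equiv 1$, so Theorem~\ref{thm:main} applied at the target exponent $q := (2n+2)/(n-2s_w) = p(n+1)/2$ gives the frequency-localized radial estimate
\[\norm{e^{\pm it|\nabla|} P_k u_0}_{L^q_{t,x}(\R^{n+1})} \les 2^{s_w k}\norm{P_k u_0}_2.\]
The hypothesis $s_w > 1/(2n)$ is exactly equivalent to $q > 2n/(n-1)$, which is the admissibility threshold in Theorem~\ref{thm:main}; and $s_w < 1/2$ gives in particular $q > 2$, so the Littlewood-Paley square function together with Minkowski's inequality ($\ell^2 \hookrightarrow L^q$ for $q \geq 2$) promotes this to the full homogeneous Strichartz estimate
\[\norm{W'(t) u_0 + W(t) u_1}_{L^q_{t,x}} \les \norm{u_0}_{\dot H^{s_w}} + \norm{u_1}_{\dot H^{s_w - 1}}.\]

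To close a fixed-point argument one also needs an inhomogeneous (Duhamel) bound. Introduce the ``dual'' exponent $\tilde q := 2(n+1)/(n-2+2s_w)$; a direct check shows that the pair $(\tilde q, \tilde q)$ is $(1-s_w)$-wave admissible and that $\tilde q > 2n/(n-1)$ holds whenever $s_w < 1 - 1/(2n)$, which is satisfied a fortiori under $s_w < 1/2$. Combining the $L^q$ and $L^{\tilde q}$ homogeneous bounds via a $TT^*$ argument and then applying the Christ-Kiselev lemma---whose hypothesis $1/q + 1/\tilde q = (n-1)/(n+1) < 1$ is immediate for $n \geq 2$---yields
\[\normo{\int_0^t W(t-s)F(s)\,ds}_{L^q_{t,x}} \les \norm{F}_{L^{\tilde q'}_{t,x}}.\]
Crucially, the exponents have been calibrated so that a plain Hölder estimate gives $\||u|^p u\|_{L^{\tilde q'}_{t,x}} = \|u\|_{L^q_{t,x}}^{p+1}$, i.e., $(p+1)\tilde q' = q$.

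The Duhamel map $u \mapsto W'(t)u_0 + W(t)u_1 - \mu\int_0^t W(t-s)(|u|^p u)(s)\,ds$ is then contractive on the ball $\{u \in L^q_{t,x}: \norm{u}_{L^q_{t,x}} \le 2C\delta\}$ for $\delta$ small enough, producing the unique global solution. Persistence in $C(\R;\dot H^{s_w})$ and $C^1(\R;\dot H^{s_w-1})$ follows by feeding this $L^q_{t,x}$ control back into the trivial $(\infty,2)$ admissible Strichartz estimate applied to the same Duhamel formula. Scattering to free waves as $t\to\pm\infty$ is then the standard argument: the Cauchy property of the pair $(\cos(-t|\nabla|) u(t) - \cdots,\ \cdots)$ in $\dot H^{s_w} \times \dot H^{s_w - 1}$ follows from the integrability of the Duhamel tail, which is bounded by $\|u\|_{L^q_{t,x}}^{p+1} \lesssim \delta^{p+1}$ via the same retarded Strichartz estimate applied on $(t,\pm\infty)$.

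The main obstacle is ensuring that the dual Strichartz pair $(\tilde q,\tilde q)$ remains inside the radial admissibility region of Theorem~\ref{thm:main} throughout the entire parameter window $s_w \in (1/(2n), 1/2)$, and simultaneously that Christ-Kiselev is applicable; both conditions reduce to elementary inequalities in $n$ and $s_w$ that bottom out on exactly this window. One can see the necessity of the asymmetric two-pair approach by noting that using the diagonal pair $(q,q)$ on both sides of Strichartz would force $q = p+2$ in Hölder, which selects precisely the endpoint $s_w = 1/2$ and is therefore incompatible with $s_w < 1/2$.
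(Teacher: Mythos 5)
Your proposal is correct and takes essentially the same approach as the paper: it reproduces Case 1 ($\frac{1}{2n}<s_w\le\frac12$) of the proof of Theorem \ref{thm:Wavewp}, with the exact same diagonal admissible pairs $(q,q)=(\tfrac{2n+2}{n-2s_w},\tfrac{2n+2}{n-2s_w})$ and $(\tilde q,\tilde q)=(\tfrac{2n+2}{n+2s_w-2},\tfrac{2n+2}{n+2s_w-2})$ that the authors substitute into Proposition \ref{prop:Striwave}, whose proof is precisely the Littlewood--Paley summation, radial duality, and Christ--Kiselev chain you describe.
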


Our results also hold for general nonlinearity, e.g. $F(u)$ with $F$
satisfying some conditions such as \eqref{eq:nonlinearF}. In
\cite{LindbladSogge}, Lindblad and Sogge studied the semilinear wave
equation with the same nonlinearity but with general non-radial
initial data. For example, for the nonlinearity $|u|^p$ they proved
small data scattering in $\dot{H}^s\times \dot{H}^{s-1}$ with
$s=\frac{n}{2}-\frac{2}{p-1}$ if $p\geq \frac{n+3}{n-1}$, and local
well-posedness if $s\geq s(p,n)$ for some $s(p,n)$. Thus we see that
their results covered the case $s_w\geq 1/2$ in Theorem
\ref{thm:iWavewp}, which is the main reason why we restrict
ourselves to the case $s_w<1/2$. In the same paper
\cite{LindbladSogge}, the authors actually showed that their results
are sharp by constructing some counter-examples. However, the
counter-examples for $s_w<1/2$ don't work for the radial case. Our
results in Theorem \ref{thm:iWavewp} showed that in the radial case
one can improve their results. Actually, we find a critical
regularity in the radial case $s_0(n)<\frac{1}{2n}$, which we will
discuss in details in Theorem \ref{thm:Wavewp}. In Section
\ref{sec:nonlinear}, we also study nonlinear fractional order
Schr\"odinger equation, and establish the well-posedness theory in
the radial case. We do not repeat the theorem here, but refer to
Theorem \ref{thm:FSchwp} below.

The fact that better well-posedness results hold in the radial case
was observed before, see \cite{Sogge, FangWang}, \cite{Hidano1,
Hidano2}. Our results generalize these results. In the non-radial
case, with additional angular regularity, one can also go below
$L^2$, see \cite{FangWang, JWY} and the references therein.
Actually, the results in \cite{FangWang} for the Schr\"odinger
equation are more generalized than ours but with different
resolution space. Our results for local well-posedness hold without
change for the inhomogeneous data $u_0\in H^s$ (see Remark
\ref{rem:inho}). It is then natural to ask whether \eqref{eq:cor}
and \eqref{eq:Stri} hold for non-radial functions with certain
angular regularity.

Throughout this paper, $C>1$ and $c<1$ will denote positive
universal constants, which can be different at different places.
$A\lesssim B$ means that $A\leq CB$,  and $A \sim B$ stands for
$A\lesssim B$ and $B\lesssim A$. We use $\hat{f}(\xi)$ and
$\mathscr{F}(f)$ to denote the spatial Fourier transform of $f$ on
$\R^{n}$ defined by
\[\hat{f}(\xi)=\int_{\R^{n}}f(x)e^{-ix\cdot\xi}dx.\]
We denote by $p'$ the dual number of $p \in [1,\infty]$, i.e.,
$1/p+1/p'=1$. Let $\Phi(x): \mathbb{R}\to [0,1]$ be a non-negative,
smooth even function such that \mbox{supp}$\Phi \subseteq
\{x:\abs{x}\leq 2\}$, and $\Phi(x)=1$, if $\abs{x}\leq 1$. Let
$\psi(x)=\Phi(x)-\Phi(2x)$, and $P_k$ be the Littlewood-Paley
projector for $k\in \Z$, namely
\[P_k
f=\mathscr{F}^{-1}\psi(2^{-k} |\xi|)\mathscr{F}f,\quad P_{\leq 0}
f=\mathscr{F}^{-1}\Phi(|\xi|)\mathscr{F} f.\] We denote by
$S_\phi(t)$ the evolution group related to \eqref{eq:abslinear},
defined as
\[
S_\phi(t)u_0(x) = e^{it\phi(\sqrt{-\Delta})} u_0(x) = c_n
\int_{\R^n} e^{ix\cdot \xi} e^{it\phi(|\xi|)} \hat u_0 (\xi)\, d\xi.
\]
We will use Lebesgue spaces $L^p:=L^p(\mathbb{R}^n)$, $\|\cdot\|_p
:=\|\cdot\|_{L^p}$. and the space-time norm $L^q_tL^r_x$ of $f$ on
$\R\times \Omega$ by
$$
\|f(t,x)\|_{L^q_tL^r_x(\R\times \Omega)}=\Big\| \| f(t,x
)\|_{L^r_x(\Omega)} \Big\|_{L^q_t(\R)},
$$
where $\Omega \subset \R^n$. When $q=r$, we abbreviate it by
$L^{q}_{t, x}(\R\times \Omega)$.

The rest of this paper is organized as follows. In Section 2, we
prove Theorem \ref{thm:main}. In Section 3 we present the
applications of Theorem \ref{thm:main} to some concrete equations.
In Section 4, we apply the improved Strichartz estimates to the
nonlinear problems.

\section{Proof of Theorem \ref{thm:main} and Theroem
\ref{thm:mainSch}}

First we prove Theorem \ref{thm:main}. We will adapt some ideas in
\cite{GPW} and Shao's ideas \cite{Shao}. However, there is a new
difficulty for the endpoint case $q=\frac{4n+2}{2n-1}$ in
\eqref{eq:thm2} due to some logarithmic divergence. Fortunately
enough, this logarithmic divergence can be overcome by using a
subtle tool: double weight Hardy-Littlewood-Sobolev inequality. On
the other hand, the logarithmic divergence for the endpoint
$q=\frac{2n}{n-1}$ in \eqref{eq:thm1} is essential. We present the
proof by the following three steps:

\noindent{\bf Step 1.} Non-endpoint: $q>\frac{2n}{n-1}$ in
\eqref{eq:thm1}, $q>\frac{4n+2}{2n-1}$ in \eqref{eq:thm2}.

For $j\in \Z$, denote
\[A_j:=\{x\in \R^{n}: 2^{j-1}\le |x|< 2^{j}\}, \quad I_j=[2^{j-1},2^{j}).\]
Fixing $k\in \Z$,
we decompose $\|S_\phi(t)\Delta_k
u_0(x)\|_{L^q_{t,x}(\R\times\R^n)}$ and get
\begin{align}\label{deco}
\|S_\phi(t)P_k u_0\|_{L^q_{t,x}(\R^{n+1})}
\le&\sum_{j\in\Z}\|S_\phi(t)P_k u_0\|_{L^q_{t,x}(\R\times A_j)}\nonumber\\
=&\sum_{j+k\le 1}\|S_\phi(t)P_k u_0\|_{L^q_{t,x}(\R\times A_j)}
+\sum_{j+k\ge 2}\|S_\phi(t)P_k u_0\|_{L^q_{t,x}(\R\times A_j)}.
\end{align}
The main tasks reduce to estimate $\|S_\phi(t)P_k
u_0\|_{L^q_{t,x}(\R\times A_j)}$. It is easy to see that
$S_\phi(t)P_k u_0$ is spherically symmetric in space if $u_0$ is
radial. Thus we can rewrite it in an integral form related to the
Bessel function. The two parts $j+k\leq 1$ and $j+k\geq 2$ exploit
different properties of the Bessel function. We give the estimates
of the two parts in the following two propositions.

\begin{proposition}\label{p1}
Assume $u_0\in L^2(\R^n)$, $u_0$ is radial, and $\phi$ satisfies
(H1) and (H2). Then if $k,j \in \Z$ with $j+k\le 1$ and $2\leq q\leq
\infty$, we have
\begin{align}\label{kj<0}
\|S_{\phi}(t)P_k u_0(x)\|_{L^q_{t,x}(\R\times A_j)}\lesssim
2^{\frac{nj}{q}}2^{(\frac{n}{2}-\frac{m(k)}{q})k}\|P_k u_0\|_{L^2},
\end{align}
where $m(k)$ is given by \eqref{eq:malpha}.
\end{proposition}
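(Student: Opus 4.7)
\textbf{Proof proposal for Proposition \ref{p1}.} Since $u_0$ is radial, $S_\phi(t)P_k u_0$ is a radial function of $x$, so it admits the Hankel-type representation
\[
S_\phi(t)P_k u_0(x) \;=\; c_n\,|x|^{-(n/2-1)}\int_0^\infty e^{it\phi(\rho)}\,\psi(2^{-k}\rho)\,\widehat{u_0}(\rho)\,J_{n/2-1}(\rho|x|)\,\rho^{n/2}\,d\rho,
\]
where $\widehat{u_0}(\rho)$ denotes the radial profile of the Fourier transform. The plan is to prove the two endpoint estimates $q=2$ and $q=\infty$ directly from this formula and then interpolate to obtain the full range $2\le q\le\infty$. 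The hypothesis $j+k\le 1$ enters only through the smallness of the Bessel argument: for $x\in A_j$ and $\rho\sim 2^k$ we have $\rho|x|\lesssim 2^{j+k}\lesssim 1$, so one may use the small-argument asymptotic $|J_{n/2-1}(z)|\lesssim z^{n/2-1}$ uniformly in that range. This is where the radial structure replaces any oscillatory integral / stationary-phase argument that would otherwise be needed.

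\emph{The $q=\infty$ endpoint.} Bounding $|e^{it\phi(\rho)}|=1$ and inserting $|J_{n/2-1}(\rho|x|)|\lesssim(\rho|x|)^{n/2-1}$, the factor $|x|^{-(n/2-1)}$ cancels and one is left with
\[
\abs{S_\phi(t)P_k u_0(x)}\lesssim \int_{\rho\sim 2^k} |\widehat{u_0}(\rho)|\,\rho^{n-1}\,d\rho \lesssim 2^{nk/2}\,\|P_k u_0\|_2
\]
by Cauchy--Schwarz in the measure $\rho^{n-1}d\rho$, giving the claim with the expected exponent $\tfrac{n}{2}k$ and no $j$-dependence.

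\emph{The $q=2$ endpoint.} Write $\|S_\phi(t)P_k u_0\|_{L^2_{t,x}(\R\times A_j)}^2$ as $\int_{2^{j-1}}^{2^j} r^{n-1}\int_\R|\cdot|^2\,dt\,dr$, then perform Plancherel in $t$ after the change of variables $s=\phi(\rho)$; hypotheses (H1)--(H2) give $|\phi'(\rho)|\sim\rho^{m(k)-1}$ on $\rho\sim 2^k$, which is the crucial input. This yields
\[
\int_\R\abs{S_\phi(t)P_k u_0(x)}^2\,dt \lesssim |x|^{-(n-2)}\int_{\rho\sim 2^k}|\widehat{u_0}(\rho)|^2\,J_{n/2-1}(\rho|x|)^2\,\rho^{n-m(k)+1}\,d\rho.
\]
Using again $J_{n/2-1}(\rho|x|)^2\lesssim(\rho|x|)^{n-2}$ (valid because $j+k\le 1$) the weight $|x|^{-(n-2)}$ cancels, then integrating the resulting $r\,dr$ factor over $r\in[2^{j-1},2^j)$ produces $2^{nj}$, and the remaining $\rho$-integral is bounded by $2^{(n-m(k))k}\|P_k u_0\|_2^2$ after factoring out the size $\rho^{n-m(k)}$ from the Plancherel weight $\rho^{n-1}\,d\rho$. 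Taking square roots gives exactly $2^{nj/2}2^{(n/2-m(k)/2)k}\|P_k u_0\|_2$.

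\emph{Interpolation.} Both endpoint estimates are of the form $\|\cdot\|_{L^q_{t,x}(\R\times A_j)}\lesssim C(q,j,k)\|P_k u_0\|_2$, and the right-hand side of \eqref{kj<0} is precisely the geometric interpolant between the $q=2$ and $q=\infty$ bounds in the exponents of $2^j$ and $2^k$. A standard complex interpolation (or simply a direct Riesz--Thorin application to the linear map $P_k u_0 \mapsto \mathbf{1}_{A_j} S_\phi(t)P_k u_0$) then yields the intermediate range. I do not expect a real obstacle here: the only subtle step is the Plancherel/change-of-variable argument at $q=2$, which hinges on the lower bound for $|\phi'|$ from (H1)--(H2); everything else is trivial smallness of the Bessel argument ensured by the constraint $j+k\le 1$.
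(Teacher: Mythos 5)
Your proof is correct and follows essentially the paper's approach: the Fourier--Bessel representation plus the small-argument Bessel bound $|J_{n/2-1}(z)|\lesssim z^{n/2-1}$ (valid since $j+k\le 1$), a change of variable $s=\phi(\rho)$ together with (H1)--(H2) to control the $t$-integral, and interpolation between $q=2$ and $q=\infty$. The paper packages the $L^q_t$ control into a standalone lemma (Lemma \ref{lemma:smoothing effect}) proved for all $2\le q\le\infty$ at once via Hausdorff--Young (which is itself Riesz--Thorin between the same two endpoints), so your interpolation step is the same one, merely placed at a different level; one small slip: the radial Lebesgue weight in the $q=2$ computation is $r^{n-1}\,dr$, not $r\,dr$, though the stated factor $2^{nj}$ is correct.
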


\begin{proposition}\label{p2}
Assume $u_0\in L^2(\R^n)$, $u_0$ is radial, and $\phi$ satisfies
(H1) and (H2). Then if $k,j \in \Z$ with $j+k\ge 2$ and $2\leq q\leq
\infty$, we have
\begin{align}\label{kj>0}
\|S_{\phi}(t)P_k u_0(x)\|_{L^q_{t,x}(\R\times A_j)}\lesssim
2^{(\frac{n}{q}-\frac{n-1}{2})j}2^{(\frac{1}{2}
-\frac{m(k)}{q})k}\|P_k u_0\|_{L^2}.
\end{align}
Furthermore, if $\phi$ also satisfies (H3) and (H4), then for $2\leq
q\leq 6$
\begin{align}\label{kj>0}
\|S_{\phi}(t)P_k u_0(x)\|_{L^q_{t,x}(\R\times A_j)}\lesssim
2^{(\frac{2n+1}{2q}-\frac{2n-1}{4})j}2^{(\frac{-3m(k)+\alpha(k)+1}{2q}+\frac{m(k)-\alpha(k)+1}{4})k}\|P_k
u_0\|_{L^2},
\end{align}
where $m(k),\alpha(k)$ is given by \eqref{eq:malpha}.
\end{proposition}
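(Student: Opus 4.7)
The plan is to combine the Bessel-function representation of a radial function with the large-argument asymptotic of the Bessel function, and then apply one-dimensional $L^q_t$ estimates on the resulting oscillatory integral. Since $u_0$ is spherically symmetric, one has
\[
S_\phi(t) P_k u_0(x) = c_n |x|^{-(n-2)/2} \int_0^\infty e^{it\phi(r)}\,\psi(2^{-k}r)\,\widetilde{u_0}(r)\,J_{(n-2)/2}(r|x|)\,r^{n/2}\,dr.
\]
In the regime $j+k\ge 2$ the Bessel argument satisfies $r|x|\sim 2^{j+k}\ge 4$ on the support, so I would insert the asymptotic
\[
J_{(n-2)/2}(\rho)=\rho^{-1/2}\bigl(e^{i\rho}w_+(\rho)+e^{-i\rho}w_-(\rho)\bigr),\qquad |w_\pm^{(l)}(\rho)|\les\rho^{-l}\ \ (\rho\ge 1),
\]
reducing everything to the two oscillatory integrals
\[
I_\pm(t,x):=|x|^{-(n-1)/2}\int e^{i(t\phi(r)\pm r|x|)}\,F(r,|x|)\,dr,
\]
whose amplitude $F(\cdot,|x|)$ is supported in $\{r\sim 2^k\}$ with $L^2_r$-norm $\les \|P_k u_0\|_{L^2}$ uniformly in $|x|$.

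For the first estimate I would prove endpoint bounds in $q=\infty$ and $q=2$ in the time variable and interpolate. The $L^\infty_t$ bound follows from Cauchy--Schwarz on an interval of length $\sim 2^k$, giving $\|I_\pm(\cdot,x)\|_{L^\infty_t}\les |x|^{-(n-1)/2}\,2^{k/2}\,\|P_k u_0\|_{L^2}$. The $L^2_t$ bound comes from Plancherel in $t$ after the change of variable $s=\phi(r)$, whose Jacobian $1/\phi'(r)\sim 2^{-(m(k)-1)k}$ is controlled by (H1)/(H2); this yields $\|I_\pm(\cdot,x)\|_{L^2_t}\les |x|^{-(n-1)/2}\,2^{(1-m(k))k/2}\,\|P_k u_0\|_{L^2}$. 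Riesz--Thorin interpolation produces the factor $2^{(\frac12-\frac{m(k)}{q})k}$, and integrating the remaining weight $|x|^{-(n-1)q/2}$ over $A_j$ (whose volume is $\sim 2^{nj}$) contributes the $j$-factor $2^{(\frac{n}{q}-\frac{n-1}{2})j}$, which gives \eqref{kj>0}.

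Under the additional hypotheses (H3)/(H4) I would upgrade the $L^\infty_t$ endpoint to an $L^6_{t,x}(\R\times A_j)$ estimate and interpolate with the $L^2_{t,x}$ bound just obtained. The point is that (H3)/(H4) give $|\phi''(r)|\sim 2^{(\alpha(k)-2)k}$ on $r\sim 2^k$, making the curve $r\mapsto(r,\phi(r))$ non-degenerate at dyadic scale. Rescaling $r=2^k\rho$, $|x|=2^{-k}\sigma$, $t=2^{-\alpha(k)k}\tau$ normalizes the phase to $\tau\tilde\phi(\rho)+\sigma\rho$ with $\tilde\phi(\rho):=2^{-\alpha(k)k}\phi(2^k\rho)$ satisfying $\tilde\phi''\sim 1$ on $\rho\sim 1$; the potentially large linear part $\tilde\phi'(\rho)\sim 2^{(m(k)-\alpha(k))k}$ is absorbed by a translation of $\sigma$. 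The one-dimensional Fourier extension estimate for a $C^2$ curve with non-degenerate second derivative,
\[
\Bigl\|\int e^{i\tau\tilde\phi(\rho)+i\sigma\rho}\,g(\rho)\,d\rho\Bigr\|_{L^6_{\tau,\sigma}(\R^2)}\les \|g\|_{L^2_\rho},
\]
then yields the desired $L^6$ bound on the rescaled integral. Undoing the rescaling together with the Jacobians coming from $dx=|x|^{n-1}d|x|\,d\omega$ and the weight $|x|^{-(n-1)/2}$ produces precisely the exponent $-\tfrac{n-1}{3}j+\tfrac{2-\alpha(k)}{6}k$ at $q=6$, and interpolation with the $q=2$ bound covers $2\le q\le 6$ with the exponents claimed in the statement.

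The main obstacle is the $q=6$ step: one must invoke a uniform-in-$k$ one-dimensional extension inequality for the family of phases $\tilde\phi$, using only the qualitative information $\tilde\phi''\sim 1$ together with the higher-derivative bounds $|\tilde\phi^{(l)}|\les 1$ for $l\ge 3$ inherited from (H1)--(H4). A secondary nuisance is that the Bessel amplitudes $w_\pm(r|x|)$ depend on $|x|$, so some care is needed to ensure that this dependence does not spoil the $L^2_\rho$ normalization required by the extension estimate; the decay $|w_\pm^{(l)}|\les\rho^{-l}$ for $\rho\gtrsim 1$ should allow this via a routine expansion of $w_\pm(r|x|)$ into an absolutely summable series whose terms carry fixed-sign powers of $|x|$.
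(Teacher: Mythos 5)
Your proposal follows essentially the same route as the paper: pass to the Bessel representation, use the large-argument asymptotic of $J_{(n-2)/2}$, estimate the resulting one-dimensional oscillatory integrals at $q=2$ (Plancherel after $s=\phi(r)$), $q=\infty$ (Cauchy--Schwarz), and $q=6$ under (H3)/(H4), and interpolate. The exponents you compute also match the paper's (for instance, $\tfrac{2-\alpha(k)}{6}=\tfrac13-\tfrac{\alpha(k)}6$ at $q=6$). The place where the two arguments genuinely diverge is in how the $|x|$-dependence of the Bessel amplitude is removed before invoking the $L^6$ extension/Strichartz bound, and this is where you have a real gap.

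You propose to write $J_\nu(\rho)=\rho^{-1/2}(e^{i\rho}w_+(\rho)+e^{-i\rho}w_-(\rho))$ with symbol bounds $|w_\pm^{(l)}|\les\rho^{-l}$, and then to remove the $\sigma$-dependence from the amplitude in the 1D extension estimate by a ``routine expansion of $w_\pm(r|x|)$ into an absolutely summable series whose terms carry fixed-sign powers of $|x|$.'' Such a series does not exist in general: the asymptotic expansion of $w_\pm$ in inverse powers of $\rho$ is only an asymptotic series, divergent when $\nu=(n-2)/2$ is an integer (i.e.\ $n$ even), so the step is not routine and as written the $L^6$ estimate cannot be applied. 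The correct repair — which is exactly what the paper does — is a \emph{two-term} decomposition rather than a full expansion: write $J_\nu(r)=\tfrac{e^{i(r-(n-1)\pi/4)}+e^{-i(r-(n-1)\pi/4)}}{2r^{1/2}}+r^{\frac{n-2}{2}}\bigl(d_n e^{-ir}E_+(r)-e_n e^{ir}E_-(r)\bigr)$ with $|E_\pm(r)|\les r^{-(n+1)/2}$. The leading term has a constant (hence $|x|$-independent) amplitude, so the 1D extension estimate applies cleanly; the remainder has an extra factor $O\bigl((r|x|)^{-3/2}\bigr)\sim 2^{-3(j+k)/2}$ and is handled crudely via the $L^2_t$/$L^\infty_t$ interpolation alone, without any $L^6$ input. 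One then also needs to check (as the paper does, verifying $C_1(k,j)\le C_2(k,j)$ for $j+k\ge 2$) that the remainder's contribution is actually dominated by the main term; your write-up does not address this comparison at all, and since the two bounds carry different exponents in both $j$ and $k$ it is not automatic.

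Two smaller points. First, for the first estimate of the proposition (no (H3)/(H4)) the $|x|$-dependence of $w_\pm$ is harmless because the $L^2_t$ and $L^\infty_t$ bounds only use the size of the amplitude, not an $L^2_\rho$ extension estimate; your argument is fine there. Second, your $L^6$ step via rescaling and the non-degenerate extension inequality is a legitimate alternative to the paper's route (which instead cites the decay estimate from \cite{GPW} and the Keel--Tao machinery to obtain Lemma~\ref{lemma:str} directly); both yield the same bound, and each requires uniformity in $k$ of the constant — your rescaling normalizes $\tilde\phi''\sim1$, which is exactly what one needs, so that part is sound once the amplitude issue above is fixed.
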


We postpone the proofs of Proposition \ref{p1} and Proposition
\ref{p2}, and first use them to complete the proof of Theorem
\ref{thm:main} in the non-endpoint case.

\begin{proof}[\bf Proof of Theorem \ref{thm:main} (non endpoint)]
We may assume $q<\infty$. Assume first that $\phi$ satisfies (H1)
and (H2). From \eqref{deco}, Proposition \ref{p1} and Proposition
\ref{p2}, we get
\begin{align*}
\|S_\phi(t)P_k u_0(x)\|_{L^q_{t,x}(\R^{n+1})}\les &\sum_{j+k\le
1}2^{\frac{nj}{q}}2^{(\frac{n}{2}-\frac{m(k)}{q})k}\|P_k
u_0\|_{L^2}\\
&+\sum_{j+k\ge 2}2^{(\frac{n}{q}-\frac{n-1}{2})j}2^{(\frac{1}{2}
-\frac{m(k)}{q})k}\|P_k u_0\|_{L^2}\\
\les&2^{(\frac{n}{2}-\frac{m(k)}{q}-\frac{n}{q})k}\|P_k u_0\|_{L^2},
\end{align*}
since $q>\frac{2n}{n-1}$ then $\frac{n}{q}-\frac{n-1}{2}<0$. Thus
\eqref{eq:thm1} is proved. Now we assume $\phi$ also satisfies (H3)
and (H4), then
\begin{align*}
\|S_\phi(t)P_k u_0(x)\|_{L^q_{t,x}(\R^{n+1})}\les &\sum_{j+k\le 1}2^{\frac{nj}{q}}2^{(\frac{n}{2}-\frac{m(k)}{q})k}\|P_k u_0\|_{L^2}\\
&+\sum_{j+k\ge
2}2^{(\frac{2n+1}{2q}-\frac{2n-1}{4})j}2^{(\frac{-3m(k)+\alpha(k)+1}{2q}-\frac{m(k)-\alpha(k)+1}{4})k}\|P_k
u_0\|_{L^2}.
\end{align*}
Note that if $q>\frac{4n+2}{2n-1}$, then
$\frac{2n+1}{2q}-\frac{2n-1}{4}<0$. Thus we can sum over $j$ and
bound the quantity above by
\[
C\Big[2^{(\frac{n}{2}-\frac{n+m(k)}{q})k+ (\frac{1}{4}-\frac{1}{2q})[m(k)-\alpha(k)]k}+
 2^{(\frac{n}{2}-\frac{n+m(k)}{q}))k}\Big]\|P_k u_0\|_{L^2}.
\]
Which is sufficient for \eqref{eq:thm2} since
$(\frac{1}{4}-\frac{1}{2q})[m(k)-\alpha(k)]k \ge 0$ in view of
Remark \ref{rem:malpha}.
\end{proof}

It remains to prove Proposition \ref{p1} and Proposition \ref{p2}.
The proof relies heavily on the radial properties. In particular, we
will use the Fourier-Bessel formula. We denote by $J_m(r)$ the
Bessel function:
\begin{align*}
J_m(r)=\frac{(r/2)^m}{\Gamma(m+1/2)\pi^{1/2}}
\int_{-1}^1e^{irt}(1-t^2)^{m-1/2}dt, \ \ m>-1/2.
\end{align*}
We first list some properties of $J_m(r)$ that will be used in the
following lemma. For their proof we refer the readers to
\cite{Stein1993}.
\begin{lemma}[Properties of the Bessel function]\label{lem:Bessel}
We have for $0<r<\infty$ and $m>-\half 1$

(i) $J_m(r)\leq Cr^m$,

(ii) $J_m(r)\leq Cr^{-\half 1}$.
\end{lemma}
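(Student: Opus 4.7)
The plan is to derive both bounds directly from the integral representation
\[J_m(r)=\frac{(r/2)^m}{\Gamma(m+1/2)\pi^{1/2}}\int_{-1}^1 e^{irt}(1-t^2)^{m-1/2}dt\]
supplied in the lemma, using only elementary majorization for (i) and a stationary-phase/endpoint analysis for (ii).

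Bound (i) is essentially immediate. Since $m>-1/2$ the weight $(1-t^2)^{m-1/2}$ is integrable on $[-1,1]$, and the trivial estimate $|e^{irt}|\le 1$ pulls out the prefactor $r^m$, leaving a finite constant $C_m$ depending only on $m$.

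For bound (ii) I would split into the regimes $r\le 1$ and $r\ge 1$. In the small-$r$ regime, (i) already gives $|J_m(r)|\le C r^m = C r^{-1/2}\cdot r^{m+1/2}\le C r^{-1/2}$, because $m+1/2>0$ and $r^{m+1/2}\le 1$. In the large-$r$ regime the phase $t\mapsto rt$ has no interior critical point on $(-1,1)$, so the dominant contribution to the oscillatory integral $I(r):=\int_{-1}^1 e^{irt}(1-t^2)^{m-1/2}dt$ comes from the two endpoints $t=\pm 1$. After localizing with a smooth cutoff and changing variables $t=1-s$ near $t=1$ (and symmetrically near $t=-1$), a standard Erd\'elyi-type calculation identifies the leading behavior as $|I(r)|\les r^{-(m+1/2)}$. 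Multiplying by the outer $r^m$ factor then produces the desired $|J_m(r)|\les r^{-1/2}$ bound.

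The main obstacle will be the rigorous extraction of the endpoint contribution when $-1/2<m<1/2$, since the amplitude $(1-t^2)^{m-1/2}$ is integrably singular at $t=\pm 1$ and the standard integration-by-parts argument does not apply directly. This is a classical issue, handled either by the Erd\'elyi lemma for oscillatory integrals with algebraic endpoint singularities or by the explicit Hankel contour representation of $J_m$; both routes deliver the sharp $r^{-(m+1/2)}$ decay, while the smooth interior piece (cut away from $t=\pm 1$) decays faster than any power of $r$ by repeated integration by parts. Once these two contributions are summed one in fact recovers the standard asymptotic $J_m(r)\sim\sqrt{2/(\pi r)}\cos(r-m\pi/2-\pi/4)$, which is substantially sharper than (ii).
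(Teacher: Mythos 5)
The paper itself offers no proof of this lemma, deferring to \cite{Stein1993}; your write-up supplies a genuine argument, and it is correct. Part (i) is exactly the trivial majorization $|e^{irt}|\le 1$ together with integrability of $(1-t^2)^{m-1/2}$ for $m>-1/2$, and the reduction of (ii) for $r\le 1$ to (i) via $r^{m+1/2}\le 1$ is clean. For $r\ge 1$ your diagnosis is also right: the phase $t\mapsto rt$ is nonstationary, so the decay is governed entirely by the algebraic endpoint singularities of $(1-t^2)^{m-1/2}$ at $t=\pm 1$, and Erd\'elyi's lemma (or, equivalently, $\int_0^\infty e^{-irs}s^{m-1/2}\,ds=\Gamma(m+1/2)(ir)^{-(m+1/2)}$ after localizing and substituting $t=1\mp s$) gives $|I(r)|\lesssim r^{-(m+1/2)}$, which upon multiplying by the prefactor $r^m$ yields $r^{-1/2}$, in fact with the sharp asymptotic constant. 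You correctly flag that naive integration by parts fails when $-1/2<m<1/2$ because the amplitude is only integrably singular, and the Erd\'elyi route you name is the standard way around it; the alternative via the Hankel/Mehler--Sonine contour representations works as well. In short, where the paper merely cites Stein, you have reconstructed the classical proof, and there is no gap.
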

It is well known that if $f(x)=g(|x|)$ is radial, then the Fourier
transform of $f$ is also radial (cf. \cite{Stein1971}), and
\begin{align}\label{e8}
\hat{f} (\xi)= 2\pi  \int_{0}^{\infty} g(s) s^{n-1}(s
|\xi|)^{-\half{n-2}}J_{\frac{n-2}{2}}(s|\xi|)ds.
\end{align}
Thus if $\widehat{u_0}(\xi)=h(|\xi|)$ is radial, then $S_\phi(t)P_k
u_0=F(t,|x|)$, and
\begin{align}\label{eq:radialfree}
F(t,|x|)= 2\pi  \int_{0}^{\infty}
e^{it\phi(s)}\psi_k(s)h(s)s^{n-1}(s
|x|)^{-\half{n-2}}J_{\frac{n-2}{2}}(s|x|)ds.
\end{align}
The issues reduce to a one-dimensional problem involving Bessel
function. We will use the following local smoothing effect type
estimate.

\begin{lemma}\label{lemma:smoothing effect}
Suppose $k\in \Z$, $\varphi \in L^2(\R)$ and $\phi$ satisfies (H1)
and (H2). Then for $2\leq q\leq \infty$, we have
\[
\Big\|\int_{\R} \psi_k(s) \varphi(s) e^{-it\phi(s)} \, ds\Big\|_{L^q_t}\les 2^{(\frac{1}{2}-\frac{m(k)}{q})k}\|\psi_k \varphi\|_{L^2}
\]
where $m(k)$ is defined in \eqref{eq:malpha}.
\end{lemma}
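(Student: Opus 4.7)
The estimate is a local smoothing bound of the form $\|T_k\varphi\|_{L^q_t}\lesssim 2^{(1/2-m(k)/q)k}\|\psi_k\varphi\|_{L^2}$, where $T_k\varphi(t):=\int_\R \psi_k(s)\varphi(s)e^{-it\phi(s)}\,ds$. My plan is to prove the two endpoints $q=\infty$ and $q=2$ separately, then interpolate by Riesz--Thorin to cover $2\le q\le\infty$.

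\textbf{Endpoint $q=\infty$.} This is immediate from Cauchy--Schwarz: since $\psi_k$ is supported on $|s|\sim 2^k$,
\[
|T_k\varphi(t)|\le \|\psi_k\|_{L^2}\|\psi_k\varphi\|_{L^2}\lesssim 2^{k/2}\|\psi_k\varphi\|_{L^2},
\]
which matches the claimed exponent $\frac{1}{2}-\frac{m(k)}{q}$ at $q=\infty$.

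\textbf{Endpoint $q=2$.} Here I would change variables via $u=\phi(s)$. Hypotheses (H1)--(H2) guarantee $|\phi'(s)|\sim s^{m(k)-1}$ on the support of $\psi_k$, so $\phi$ is strictly monotone there and the inverse $\phi^{-1}$ is well-defined on the image. Writing
\[
T_k\varphi(t)=\int_\R g(u)\,e^{-itu}\,du,\qquad g(u):=\frac{(\psi_k\varphi)(\phi^{-1}(u))}{\phi'(\phi^{-1}(u))},
\]
Plancherel gives $\|T_k\varphi\|_{L^2_t}\sim \|g\|_{L^2(du)}$. Transporting the norm back to the $s$-variable via $du=\phi'(s)\,ds$,
\[
\|g\|_{L^2(du)}^2=\int \frac{|\psi_k\varphi|^2(s)}{\phi'(s)}\,ds \lesssim 2^{-(m(k)-1)k}\|\psi_k\varphi\|_{L^2}^2,
\]
since $\phi'(s)\sim 2^{(m(k)-1)k}$ on $\supp\psi_k$. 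This yields $\|T_k\varphi\|_{L^2_t}\lesssim 2^{(1-m(k))k/2}\|\psi_k\varphi\|_{L^2}$, which again matches $\frac{1}{2}-\frac{m(k)}{q}$ at $q=2$.

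\textbf{Interpolation.} Both endpoints are linear estimates on $T_k:L^2(\psi_k\,ds)\to L^\infty_t$ and $L^2(\psi_k\,ds)\to L^2_t$, with operator norms $2^{k/2}$ and $2^{(1-m(k))k/2}$ respectively. Riesz--Thorin interpolation then produces the full range $2\le q\le\infty$ with norm $2^{(1/2-m(k)/q)k}$, as claimed. I do not foresee a real obstacle: the only thing to be a bit careful about is that $\phi$ may fail to be smooth at the origin, but the support of $\psi_k$ is bounded away from $0$, so the change of variables is valid on that support and the constants are uniform in $k$ thanks to the sharp bounds $|\phi'(s)|\sim s^{m(k)-1}$ provided by (H1) and (H2).
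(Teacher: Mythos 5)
Your proof is correct and is essentially the paper's argument rearranged: the paper performs the same change of variable $a=\phi(s)$, applies Hausdorff--Young (which is exactly the Riesz--Thorin interpolation of the $L^1\to L^\infty$ and $L^2\to L^2$ Fourier bounds), changes back, and uses H\"older on the support of $\psi_k$; you instead verify the two endpoints $q=2$ (change of variable plus Plancherel) and $q=\infty$ (Cauchy--Schwarz on the support of size $\sim 2^k$) and interpolate $T_k$ directly. The only small imprecision is in the $q=\infty$ step, where the bound should come from Cauchy--Schwarz with the indicator of $\supp\psi_k$, i.e.\ $\|\psi_k\varphi\|_{L^1}\lesssim |\supp\psi_k|^{1/2}\|\psi_k\varphi\|_{L^2}\sim 2^{k/2}\|\psi_k\varphi\|_{L^2}$, rather than from pairing $\psi_k$ against $\psi_k\varphi$; the conclusion is identical.
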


\begin{proof}
It is easy to see that in the support of $\psi_k$, $\phi$ is
invertible and we denote $\phi^{-1}$ to be the inverse of $\phi$. By
the change of variable $a=\phi(s)$, we get
\[
\Big\|\int_{\R} \psi_k(s)\varphi(s) e^{-it\phi(s)} \,
ds\Big\|_{L^q_t} = \Big\|\int_{\R} \psi_k(\phi^{-1}(a)) e^{-ita}
\frac{\varphi(\phi^{-1}(a))
}{|\phi'(\phi^{-1}(a))|}da\Big\|_{L^q_t}.
\]
Then from the Hausdorff-Young inequality and change of variable
$s=\phi(a)$, we get the quantity above is bounded by
\[
C\Big\|\psi_k(\phi^{-1}(a))\frac{\varphi(\phi^{-1}(a))}{|\phi'(\phi^{-1}(a))|}\Big\|_{L^{q'}_a}
=C\Big\|\psi_k(s)\frac{\varphi(s)}{|\phi'(s)|^{\frac{1}{q}}}\Big\|_{L^{q'}_s},
\]
From the condition we have $\phi'(s) \sim 2^{(m(k)-1)k}$ in the
support of $\psi_k$, and then by H\"older inequality we can bound
the quantity above by
\[
C2^{\frac{-m(k)+1}{q}k}2^{(\frac{1}{q'}-\frac{1}{2})k}\|\psi_k(s) \varphi(s)\|_{L^2_s}=C2^{(\frac{1}{2}-\frac{m(k)}{q})k}\|\psi_k\varphi\|_{L^2}
\]
Thus we finish the proof.
\end{proof}

\begin{lemma}[Strichartz estimate]\label{lemma:str}
Suppose $\varphi\in L^2(\R)$ and $\phi$ satisfies one of H(3) and
H(4). Then for $k\in \Z$, we have
\[
\Big\|\int_{\R} \psi_k(s) \varphi(s) e^{irs-it\phi(s)} \,
ds\Big\|_{L^6_tL_r^6}\les
2^{(\frac{1}{3}-\frac{\alpha(k)}{6})k}\|\psi_k\varphi \|_{L^2},
\]
where $\alpha(k)$ is defined in \eqref{eq:malpha}.
\end{lemma}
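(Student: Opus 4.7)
The plan is to prove this by a one-dimensional $TT^*$ argument parallel to the Keel--Tao scheme for the 1D Schr\"odinger equation, with the free phase $s^2$ replaced by $\phi(s)$. Fix a bump $\wt\psi_k$ that equals $1$ on the support of $\psi_k$ and has comparable support, and set
\[
Tg(t,r)=\int_\R \wt\psi_k(s)\,g(s)\,e^{irs-it\phi(s)}\,ds.
\]
Since the statement concerns $g=\psi_k\varphi$, it suffices to establish $\norm{Tg}_{L^6_tL^6_r}\les 2^{(\frac13-\frac{\alpha(k)}{6})k}\norm{g}_{L^2}$. By duality this is equivalent to $\norm{TT^*}_{L^{6/5}_{t,r}\to L^6_{t,r}}\les 2^{(\frac23-\frac{\alpha(k)}{3})k}$, and a short computation identifies $TT^*$ as the space-time convolution operator with kernel $K(t-t',\,r-r')$, where
\[
K(t,r)=\int \wt\psi_k(s)^2\,e^{irs-it\phi(s)}\,ds.
\]

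The heart of the argument is a dispersive bound for $K$. The trivial estimate gives $\norm{K(t,\cdot)}_{L^\infty_r}\les 2^k$, while (H3) or (H4) (according to the sign of $k$) provides $|\phi''(s)|\sim 2^{(\alpha(k)-2)k}$ uniformly on the support of $\wt\psi_k$. The phase $s\mapsto rs-t\phi(s)$ thus has second derivative of size $|t|\,2^{(\alpha(k)-2)k}$, and Van der Corput's lemma of order two yields
\[
|K(t,r)|\les \bigl(|t|\,2^{(\alpha(k)-2)k}\bigr)^{-1/2}=|t|^{-1/2}A_k,\qquad A_k:=2^{(2-\alpha(k))k/2}.
\]
Moreover, convolution with $K(t,\cdot)$ is bounded on $L^2_r$ uniformly in $t$, because $K(t,\cdot)$ is (up to constant) the inverse Fourier transform of $\wt\psi_k(\cdot)^2\,e^{-it\phi(\cdot)}\in L^\infty$.

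Riesz--Thorin interpolation between the $L^1_r\to L^\infty_r$ bound $|t|^{-1/2}A_k$ and the $L^2_r\to L^2_r$ bound $1$ yields, for each fixed $t$,
\[
\norm{K(t,\cdot)\ast f}_{L^6_r}\les |t|^{-1/3}A_k^{2/3}\norm{f}_{L^{6/5}_r}.
\]
The one-dimensional Hardy--Littlewood--Sobolev inequality in $t$ (the kernel $|t|^{-1/3}$ maps $L^{6/5}_t\to L^6_t$) then gives $\norm{TT^*}_{L^{6/5}_{t,r}\to L^6_{t,r}}\les A_k^{2/3}$, and a square root delivers the constant $A_k^{1/3}=2^{(\frac13-\frac{\alpha(k)}{6})k}$ claimed in the lemma.

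The only delicate point is the Van der Corput step: one must check that the second-derivative bound $|\phi''(s)|\sim 2^{(\alpha(k)-2)k}$ holds uniformly on the dyadic shell $\{|s|\sim 2^k\}$ and that the amplitude $\wt\psi_k^2$ has bounded variation with the appropriate constant, both of which are encoded by (H3)/(H4) and the dyadic scaling of the Littlewood--Paley bump. The remainder is the standard Strichartz machine, and $(q,r)=(6,6)$ is the natural diagonal admissible pair for a 1D half-wave-type problem with $|t|^{-1/2}$ dispersion.
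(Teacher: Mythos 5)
Your proof is correct and takes essentially the same route as the paper: the paper cites Theorem~1 of \cite{GPW} for the $|t|^{-1/2}2^{(1-\alpha(k)/2)k}$ dispersive decay and then invokes Proposition~1 of \cite{GPW} (the Keel--Tao abstraction) to pass to the $L^6_{t,r}$ Strichartz bound, whereas you prove the same kernel bound directly from Van der Corput under (H3)/(H4) and then run the $TT^*$ + Riesz--Thorin + one-dimensional Hardy--Littlewood--Sobolev machinery explicitly. The exponents match ($A_k^{1/3}=2^{(\frac13-\frac{\alpha(k)}{6})k}$), and the observation that $\wt\psi_k^2$ has $O(1)$ total variation so Van der Corput produces no spurious factor of $2^k$ is exactly the dyadic bookkeeping needed.
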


\begin{proof}
Since $\phi$ satisfies (H3) and (H4), then by Theorem 1 in
\cite{GPW}, we have the decay estimate
\[
\Big\|\int_{\R} \psi_k(s) \varphi(s) e^{irs-it\phi(s)} \,
ds\Big\|_{L^\infty_r}\les
|t|^{-\frac{1}{2}}2^{(1-\frac{\alpha(k)}{2})}
\|\mathscr{F}^{-1}[\psi_k \varphi]\|_{L^1}.
\]
Then Lemma 2 follows immediately from Proposition 1 in \cite{GPW},
also see \cite{KT}.
\end{proof}

\begin{proof}[\bf Proof of Proposition \ref{p1}]
We get from \eqref{eq:radialfree} and Lemma \ref{lem:Bessel} (i) and
Lemma \ref{lemma:smoothing effect} that
\begin{align*}
\|S_\phi(t)P_k u_0(x)\|_{L^q_{t,x}(\R\times
A_j)}\les&\norm{F_k(t,r)r^{\frac{n-1}{q}}}_{L_t^qL_{I_j}^q}\\
\les&2^{(\frac{1}{2}-\frac{m(k)}{q})k}\norm{\psi_k(s)h(s)s^{n-1}r^{\frac{n-1}{q}}}_{L_{r\in
I_j}^qL_s^2}\\
\les&2^{\frac{nj}{q}}2^{(\frac{n}{2}-\frac{m(k)}{q})k}\norm{\psi_k(s)h(s)s^{\frac{n-1}{2}}}_{L_s^2}
\end{align*}
which completes the proof of Proposition \ref{p1}, since
$\norm{\psi_k(s)h(s)s^{\frac{n-1}{2}}}_{L_s^2}=\norm{P_k
u_0}_{L^2}$.
\end{proof}

It remains to prove Proposition \ref{p2}. We will use the decay
properties at the infinity of the Bessel function. More precisely,
\begin{align}\label{eq:Besselinfty}
J_{\frac{n-2}{2}}(r)=\frac{e^{i(r-(n-1)\pi/4)}+e^{-i(r-(n-1)\pi/4)}}{2r^{1/2}}+d_nr^{\frac{n-2}{2}}e^{-ir}E_+(r)-e_nr^{\frac{n-2}{2}}e^{ir}E_-(r),
\end{align}
where $E_\pm(r)\les r^{-(n+1)/2}$ if $r\ge1$, $d_n,e_n$ are
constants, see \cite{Stein1993}. Inserting \eqref{eq:Besselinfty}
into \eqref{eq:radialfree}, we then divide $F(t,|x|)$ into two
parts: the main term and the error term, namely
\begin{align}\label{eq:decfree}
F(t,|x|)=M(t,|x|)+E(t,|x|)
\end{align}
with
\begin{align*}
{M}(t,r)=&c_nr^{-\frac{n-1}{2}}\int_{\R}
\psi_k(s)h(s)s^{\frac{n-1}{2}} e^{i(rs-t\phi(s))}
ds+\bar{c_n}r^{-\frac{n-1}{2}}\int_{\R} \psi_k(s)
h(s)s^{\frac{n-1}{2}}e^{-i(rs+t\phi(s))} ds,\\
{E}(t,r)=&c_1\int_{\R} \psi_k(s)h(s)s^{n-1}e^{-it\phi(s)-
irs}E_{+}(rs)ds-c_2\int_{\R} \psi_k(s)h(s)s^{n-1}e^{-it\phi(s)+
irs}E_{-}(rs)ds.
\end{align*}

First we estimate the error term ${E(t,|x|)}$ in the following
Lemma.
\begin{lemma}\label{lem:error}
Assume $\phi$ satisfies (H1) and (H2). If $j+k \ge 2$ and $2\leq
q\leq \infty$, we have
\begin{align}\label{eq:lemerror}
\|E(t,|x|)\|_{L^q_{t,x}(\R\times A_j)}\lesssim 2^{(-\frac
{n+1}{2}+\frac{n}{q})j} 2^{-(\frac {1}{2}+\frac{m(k)}{q})k}\|P_k
u_0\|_{L^{2}}.
\end{align}
\end{lemma}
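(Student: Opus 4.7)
The plan is to treat each of the two summands in the definition of $E(t,r)$ in the same way, since the signs of the phases $\pm irs$ will be absorbed into the amplitude. The key observation is that on the support of $\psi_k(s)$ we have $s\sim 2^k$, and for $r\in I_j$ we have $r\sim 2^j$, so $rs\gtrsim 2^{j+k}\ge 1$; hence the ``big-argument'' bound $|E_\pm(rs)|\lesssim (rs)^{-(n+1)/2}$ from \eqref{eq:Besselinfty} applies throughout the integration region.

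Next, for a fixed radius $r\in I_j$, I view the factor $\psi_k(s)h(s)s^{n-1}E_\pm(rs)e^{\mp irs}$ as a purely $s$-dependent amplitude (since $r$ is a parameter and the factor $e^{\mp irs}$ carries no $t$-dependence). Applying Lemma \ref{lemma:smoothing effect} to
\[
\int_{\R}\psi_k(s)\bigl[h(s)s^{n-1}E_\pm(rs)e^{\mp irs}\bigr]e^{-it\phi(s)}\,ds
\]
in $L^q_t$ gives, for every fixed $r$,
\[
\bigl\|E(t,r)\bigr\|_{L^q_t}\lesssim 2^{(\frac12-\frac{m(k)}{q})k}\bigl\|\psi_k(s)h(s)s^{n-1}E_\pm(rs)\bigr\|_{L^2_s}.
\]
Using $|E_\pm(rs)|\lesssim (rs)^{-(n+1)/2}$ and $s\sim 2^k$ in the support of $\psi_k$, the $L^2_s$-norm on the right-hand side is bounded by $r^{-(n+1)/2}2^{-k}\|\psi_k(s)h(s)s^{(n-1)/2}\|_{L^2_s}$, which (up to constants coming from Plancherel in polar coordinates) equals $r^{-(n+1)/2}2^{-k}\|P_k u_0\|_{L^2}$.

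Finally, I convert to the spatial norm on the annulus $A_j$ via the radial formula $\|F(|x|)\|_{L^q_x(A_j)}\sim \|F(r)r^{(n-1)/q}\|_{L^q_{r\in I_j}}$, swap the $L^q_t$ and $L^q_r$ norms (which costs nothing since they coincide), and integrate the remaining power of $r$ over $I_j$. This yields the factor
\[
\bigl\|r^{(n-1)/q-(n+1)/2}\bigr\|_{L^q_{r\in I_j}}\lesssim 2^{j(n/q-(n+1)/2)},
\]
which combined with the previous bound produces exactly the right-hand side of \eqref{eq:lemerror}. No step looks delicate: the main point is the clean separation between the $t$-variable (handled by Lemma \ref{lemma:smoothing effect}) and the spatial profile (handled by the pointwise decay of $E_\pm$), so there is no real obstacle; the mild book-keeping of exponents is the only thing to watch.
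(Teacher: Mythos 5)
Your proposal is correct and follows essentially the same route as the paper: convert the annulus norm to a weighted $L^q_r$ norm, apply Lemma \ref{lemma:smoothing effect} in $t$ for each fixed $r$ (absorbing $e^{\mp irs}E_\pm(rs)$ into the amplitude), and then use the pointwise decay $|E_\pm(rs)|\lesssim(rs)^{-(n+1)/2}$ together with $s\sim 2^k$, $r\sim 2^j$ to collect the powers. The exponent bookkeeping you carry out matches the paper's bound exactly.
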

\begin{proof}
As in the proof of Proposition \ref{p1}, we have
\begin{align*}
\|E(t,|x|)\|_{L^q_{t,x}(\R\times
A_j)}\les&\norm{E(t,r)r^{\frac{n-1}{q}}}_{L_t^qL_{I_j}^q}\\
\les&2^{(\frac
{1}{2}-\frac{m(k)}{q})k}\norm{\psi_k(s)F(s)s^{n-1}r^{\frac{n-1}{q}}E_\pm(rs)}_{L_{r\in
I_j}^qL_s^2}\\
\les&2^{-(\frac
{1}{2}+\frac{m(k)}{q})k}2^{j(\frac{n}{q}-\frac{n+1}{2})}\norm{\psi_k(s)F(s)s^{\frac{n-1}{2}}}_{L_s^2},
\end{align*}
where we used the fact $|E_\pm(r)|\les r^{-(n+1)/2}$. Thus we
complete the proof.
\end{proof}

Next we estimate the main term $M(t,|x|)$ in the following Lemma.

\begin{lemma}\label{lem:main}
(a) Assume $\phi$ satisfies (H1) and (H2). If $j+k \ge 2$, we have
\begin{align}
\|M(t,|x|)\|_{L^2_{t,x}(\R\times A_j)}\lesssim&
2^{j/2}2^{\frac{1-m(k)}{2}k}\|P_ku_0\|_{L^2},\label{eq:mainterm2}\\
\|M(t,|x|)\|_{L^\infty_{t,x}(\R\times
A_j)}\lesssim&2^{-j(n-1)/2}2^{k/2}\|P_ku_0\|_{L^2}.\label{eq:mainterminfty}
\end{align}

(b) Assume $\phi$ satisfies (H3) and (H4). If $j+k \ge 2$, we have
\begin{align}\label{eq:mainterm6}
\|M(t,x)\|_{L^6_{t,x}(\R\times A_j)}\lesssim
2^{-\frac{n-1}{3}j}2^{(\frac{1}{3}-\frac{\alpha(k)}{6})k}\|P_ku_0\|_{L^2}.
\end{align}
\end{lemma}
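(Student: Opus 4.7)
The plan is to reduce each of the three bounds to the one-dimensional estimates Lemma \ref{lemma:smoothing effect} and Lemma \ref{lemma:str}. The key observation is that, after the decomposition \eqref{eq:decfree}, the main term takes the form $M(t,r) = r^{-(n-1)/2}\bigl(c_n I_+(t,r) + \bar c_n I_-(t,r)\bigr)$, where
\[
I_\pm(t,r) = \int_\R \psi_k(s) h(s) s^{(n-1)/2} e^{\pm irs - it\phi(s)}\,ds,
\]
i.e.\ exactly a one-dimensional free evolution (with phase $\phi$) applied to a frequency-$2^k$ localized function whose $L^2_s$-norm equals $\|P_k u_0\|_{L^2}$. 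Converting the $L^q_{t,x}(\R\times A_j)$-norm to polar coordinates contributes a Jacobian $r^{n-1}$ that partially cancels the prefactor $r^{-(n-1)q/2}$, leaving a weight $r^{n-1-(n-1)q/2}$ to be integrated over $r\in I_j$. By symmetry it suffices to treat $I_+$; the case $I_-$ follows by $r\mapsto -r$.

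For \eqref{eq:mainterm2}, I would freeze $r$ and apply Lemma \ref{lemma:smoothing effect} at $q=2$ with $\varphi(s)=h(s)s^{(n-1)/2}e^{irs}$. Since the unimodular factor $e^{irs}$ preserves the $L^2_s$-norm, this gives $\|I_+(\cdot,r)\|_{L^2_t}\les 2^{(1-m(k))k/2}\|P_k u_0\|_{L^2}$ uniformly in $r$, and integrating over $r\in I_j$ contributes the remaining $2^{j/2}$. The $L^\infty$ bound \eqref{eq:mainterminfty} is even simpler: estimate $I_+$ pointwise by placing absolute values inside the integral and using Cauchy--Schwarz in $s$ on $\supp \psi_k$ (of length $\sim 2^k$), yielding $|I_+(t,r)|\les 2^{k/2}\|P_k u_0\|_{L^2}$; combined with $r^{-(n-1)/2}\sim 2^{-(n-1)j/2}$ on $A_j$ this gives the claim.

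For \eqref{eq:mainterm6}, I would apply Lemma \ref{lemma:str} directly to the joint space-time norm of $I_+$, giving $\|I_+\|_{L^6_t L^6_r(\R^2)}\les 2^{(1/3-\alpha(k)/6)k}\|P_k u_0\|_{L^2}$. Expressing $\|M\|_{L^6_{t,x}(\R\times A_j)}$ in polar coordinates yields
\[
\|M\|_{L^6_{t,x}(\R\times A_j)}^6 \sim \int_\R\int_{I_j} r^{-3(n-1)+(n-1)}|I_+(t,r)|^6\,dr\,dt = \int_\R\int_{I_j} r^{-2(n-1)}|I_+(t,r)|^6\,dr\,dt,
\]
and since $r^{-2(n-1)}\sim 2^{-2(n-1)j}$ on $I_j$, the Strichartz bound for $I_+$ delivers \eqref{eq:mainterm6} after taking sixth roots.

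There is no deep analytic obstacle; the argument is essentially careful Jacobian bookkeeping, and the decomposition \eqref{eq:decfree} of the Bessel function was engineered precisely so that the oscillatory integrals $I_\pm$ fit the one-dimensional evolutions already handled. The only mildly delicate point is verifying that Lemma \ref{lemma:str}, stated for the phase $rs - t\phi(s)$, also covers $I_-$ with phase $-rs - t\phi(s)$, but this is immediate by the change of variable $r\mapsto -r$ under which the $L^6_r(\R)$-norm is invariant.
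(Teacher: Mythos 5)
Your proposal is correct and follows essentially the same route as the paper: after polar coordinates absorb the factor $r^{-(n-1)/2}$, the three bounds reduce to Lemma \ref{lemma:smoothing effect} at $q=2$ (freezing $r$ and integrating), a trivial Cauchy--Schwarz for $q=\infty$, and Lemma \ref{lemma:str} for $q=6$, exactly as the paper does. The Jacobian bookkeeping and the reflection argument for $I_-$ are accounted for correctly.
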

\begin{proof}
From symmetry it suffices to estimate the first term in $M(t,|x|)$.
We get from Lemma \ref{lemma:smoothing effect} with $q=2$ that
\begin{align*}
\norm{M(t,|x|)}_{L^2_{t,x}(\R\times
A_j)}\les&\norm{M(t,r)r^{\frac{n-1}{2}}}_{L_t^2L_{I_j}^2}\\
\les&\normo{\int_\R
\psi_k(s)h(s)s^{\frac{n-1}{2}}e^{i(rs-t\phi(s))}ds}_{L_{I_j}^2L_t^2}\\
\les&2^{j/2}2^{-\frac{m(k)-1}{2}k}\norm{\psi_k(s)h(s)s^{\frac{n-1}{2}}}_{L_s^2},
\end{align*}
which gives the first inequality, as desired. Similarly,
\begin{align*}
\norm{M(t,|x|)}_{L^\infty_{t,x}(\R\times
A_j)}\les&\norm{M(t,r)}_{L_t^\infty L_{I_j}^\infty}\\
\les&2^{-j(n-1)/2}\normo{\int_\R
\psi_k(s)h(s)s^{\frac{n-1}{2}}e^{i(rs-t\phi(s))}ds}_{L_{I_j}^\infty L_t^\infty}\\
\les&2^{-j(n-1)/2}2^{k/2}\norm{\psi_k(s)h(s)s^{\frac{n-1}{2}}}_{L_s^2},
\end{align*}

To prove (b), we get from Lemma \ref{lemma:str} that
\begin{align*}
\|M(t,|x|)\|_{L^6_{t,x}(\R\times A_j)}\les&\norm{M(t,r)r^{\frac{n-1}{6}}}_{L_t^6L_{I_j}^6}\\
\les& 2^{-(n-1)j/3}\normo{\int_\R
\psi_k(s)h(s)s^{\frac{n-1}{2}}e^{i(rs-t\phi(s))}ds}_{L_t^6L_{I_j}^6}\\
\les&2^{-(n-1)j/3}2^{(\frac{1}{3}-\frac{\alpha(k)}{6})k}\|\psi_k(s)h(s)s^{\frac{n-1}{2}}\|_{L^2_s},
\end{align*}
which completes the proof of the lemma.
\end{proof}

Now we are ready to prove Proposition \ref{p2}.

\begin{proof}[\bf Proof of Proposition \ref{p2}]
If $\phi$ satisfies (H1) and (H2), then by interpolating
\eqref{eq:mainterm2} and \eqref{eq:mainterminfty} we get that for
$2\leq q\leq \infty$
\begin{align}\label{mainq2}
\|M(t,x)\|_{L^q_{t,x}(\R\times A_j)}\les
2^{(\frac{n}{q}-\frac{n-1}{2})j}2^{(\frac{1}{2}
-\frac{m(k)}{q})k}\|P_k u_0\|_{L^2}.
\end{align}
Then from Lemma \ref{lem:error} we get for $2\leq q\leq \infty$
\begin{align*}
\|S_\phi(t)P_k u_0(x)\|_{L^q_{t,x}(\R\times A_j)}\les&
\|E(t,x)\|_{L^q_{t,x}(\R\times A_j)}+\|M(t,x)\|_{L^q_{t,x}(\R\times A_j)}\\
\les & 2^{(\frac{n}{q}-\frac{n-1}{2})j}2^{(\frac{1}{2}
-\frac{m(k)}{q})k}\|P_k u_0\|_{L^2}.
\end{align*}
Moreover, if $\phi$ also satisfies (H3) and (H4), then by
interpolating \eqref{eq:mainterm2} and \eqref{eq:mainterm6} we get
that for $2\leq q\leq 6$
\begin{align}\label{mainq2}
\|M(t,x)\|_{L^q_{t,x}(\R\times A_j)}&\les
2^{(\frac{2n+1}{2q}-\frac{2n-1}{4})j}2^{(\frac{-3m(k)+\alpha(k)+1}{2q}
+\frac{m(k)-\alpha(k)+1}{4})k}\|P_k u_0\|_{L^2}.
\end{align}
Thus, in view of Lemma \ref{lem:error} and \eqref{mainq2}, the
left-hand side of \eqref{kj>0} can be bounded by
\begin{align*}
\|S_\phi(t)P_k u_0(x)\|_{L^q_{t,x}(\R\times A_j)}\les&
\|{E}(t,x)\|_{L^q_{t,x}(\R\times A_j)}+\|M(t,x)\|_{L^q_{t,x}(\R\times A_j)}\\
\les & (C_1(k,j)+C_2(k,j))\|P_k u_0\|_{L^2}
\end{align*}
where
\begin{align*}
C_1(k,j)=&2^{(-\frac {n+1}{2}+\frac{n}{q})j}2^{-(\frac {1}{2}+\frac{m(k)}{q})k},\\
C_2(k,j)=&2^{(\frac{2n+1}{2q}-\frac{2n-1}{4})j}2^{(\frac{-3m(k)+\alpha(k)+1}{2q}
+\frac{m(k)-\alpha(k)+1}{4})k}.
\end{align*}
It remains to prove $C_1(k,j)\le C_2(k,j)$. Actually, by simple
calculation we get
\begin{align*}
\frac{C_2(k,j)}{C_1(k,j)} = & 2^{(\frac{2n+1}{2q}-\frac{2n-1}{4}+\frac {n+1}{2}-\frac{n}{q})j+(\frac{-3m(k)+\alpha(k)+1}{2q}
+\frac{m(k)-\alpha(k)+1}{4}+\frac {1}{2}+\frac{m(k)}{q})k}\\
=&2^{(j+k)(\frac{1}{2q}+\frac{3}{4})+(\frac{1}{4}-\frac{1}{2q})(m(k)-\alpha(k))k}.
\end{align*}
It is easy to see that
\[
(j+k)\Big(\frac{1}{2q}+\frac{3}{4}\Big)+\Big(\frac{1}{4}-\frac{1}{2q}\Big)(m(k)-\alpha(k))k\ge
1,
\]
since $j+k\ge 2$ and $(m(k)-\alpha(k))k\ge 0$ in view of Remark
\ref{rem:malpha}. Thus we finish the proof.
\end{proof}

\noindent{\bf Step 2.} Endpoint: $q=\frac{4n+2}{2n-1}$ in
\eqref{eq:thm2}.

From step 1 we see that in this case we just fail to sum over $j\geq
2-k$. To overcome this, we do not decompose for large $j$. The main
tools are the Van der Corput Lemma \cite{Stein1993} and double
weight Hardy-Littlewood-Sobolev inequalities \cite{Stein1958}:
\begin{lemma}[Van der Corput]\label{lem:vander}
Assume $\psi \in C_0^\infty(\R)$ and $P\in C^2(\R)$ is a real-valued
function such that $|P''(\xi)|\geq \lambda$ in the support of
$\psi$. Then
\[\aabs{\int e^{iP(\xi)}\psi(\xi)d\xi}\leq C\lambda^{-1/2}(\norm{\psi}_\infty+\norm{\psi'}_1).\]
\end{lemma}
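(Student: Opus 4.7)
This is a classical lemma; the plan is to implement the standard integration-by-parts proof with a resonance cutoff.

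First, I would note that $|P''(\xi)| \geq \lambda > 0$ combined with $P \in C^2$ forces $P''$ to have a constant sign on $\supp \psi$ (by the intermediate value theorem), so $P'$ is strictly monotone there and has at most one zero $\xi_0$ (taking $\xi_0$ outside $\supp \psi$ if no such zero exists). I would then split the integration at a scale $\delta > 0$ to be optimized. On the resonant region $\{|\xi - \xi_0| \leq \delta\}$, the trivial bound gives contribution $\les \delta \norm{\psi}_\infty$. On the complementary region, monotonicity of $P'$ together with $|P''| \geq \lambda$ yields $|P'(\xi)| \geq \lambda\delta$, which is precisely what makes integration by parts feasible.

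For the non-resonant piece I would write $e^{iP(\xi)} = (iP'(\xi))^{-1}(e^{iP(\xi)})'$ and integrate by parts:
\begin{equation*}
\int_{|\xi-\xi_0|>\delta} e^{iP}\psi\,d\xi = \Big[\frac{\psi e^{iP}}{iP'}\Big]_{\xi=\xi_0\pm\delta} - \int_{|\xi-\xi_0|>\delta} e^{iP}\brk{\frac{\psi'}{iP'} - \frac{\psi P''}{i(P')^2}}d\xi.
\end{equation*}
The boundary terms are $\les \norm{\psi}_\infty/(\lambda\delta)$; the $\psi'$-term contributes $\les \norm{\psi'}_1/(\lambda\delta)$; and the $P''/(P')^2$-term is bounded by $\norm{\psi}_\infty$ times the total variation of $1/P'$ on the non-resonant set, which is $O(1/(\lambda\delta))$ because $P'$ is monotone on each of the two components where it keeps a constant sign.

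Summing all pieces gives $\aabs{\int e^{iP}\psi\,d\xi} \les \delta\norm{\psi}_\infty + (\norm{\psi}_\infty + \norm{\psi'}_1)/(\lambda\delta)$, and the optimal choice $\delta = \lambda^{-1/2}$ balances the two contributions to produce the claimed bound. The only real obstacle is the total-variation bookkeeping for $1/P'$, which requires separately handling the two components of the non-resonant region where $P'$ has opposite signs; this step rests crucially on the monotonicity of $P'$ extracted from the hypothesis $|P''| \geq \lambda$.
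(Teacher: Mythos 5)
The paper does not prove this lemma; it is quoted directly from Stein \cite{Stein1993} as a known black box, so there is no internal proof to compare against. Your argument is the standard one for the $k=2$ case of Van der Corput: a resonance cutoff at scale $\delta$ around the unique critical point of $P'$, a trivial bound on the resonant window, integration by parts off it using $|P'|\geq \lambda\delta$, monotonicity of $1/P'$ on each side to control $\int |P''|/(P')^2$, and optimization $\delta=\lambda^{-1/2}$. This is essentially Stein's proof, modulo a cosmetic difference: Stein first proves the amplitude-free estimate $|\int_a^b e^{iP}|\lesssim\lambda^{-1/2}$ by the same cutoff argument and then absorbs $\psi$ via a single additional integration by parts, whereas you carry $\psi$ through the whole computation. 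Both routes are standard and correct, and produce the same optimization.

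One small point deserving a word of caution. Your opening IVT step --- ``$|P''|\geq\lambda$ forces $P''$ to have constant sign on $\supp\psi$'' --- implicitly uses that $\supp\psi$ is an interval (or that $|P''|\geq\lambda$ holds on the convex hull of $\supp\psi$). As literally stated, $\psi\in C_0^\infty(\R)$ could have disconnected support with $P''$ switching sign in a gap between components, in which case $P'$ could have more than one zero and a single resonance cutoff would not suffice. This is a latent imprecision in the lemma statement itself, and the canonical formulation (Stein's) is over an interval $[a,b]$ with the derivative bound holding on all of $[a,b]$, which is exactly how the paper applies it (to $\psi_0^2(\eta_k(\mu))$, supported in a single interval near $\mu\sim 1$). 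So your proof is correct in the setting actually used; just be aware that you are silently assuming the interval hypothesis.
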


\begin{lemma}\label{lem:DWHLS}
If $1<r,s<\infty$, $1/r+1/s\geq 1$, $0<\lambda<d$, $\alpha+\beta\geq
0$ and
\[1-\frac{1}{r}-\frac{\lambda}{d}<\frac{\alpha}{d}<1-\frac{1}{r},\ \frac{1}{r}+\frac{1}{s}+\frac{\lambda+\alpha+\beta}{d}=2,\]
then
\[\aabs{\int_{\R^d}\int_{\R^d}\frac{f(x)g(y)}{|x|^\alpha |x-y|^\lambda|y|^\beta}dxdy}\leq C_{\alpha,\beta,s,\lambda,d}\norm{f}_r \norm{g}_s.\]
\end{lemma}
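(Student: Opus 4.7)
The plan is to establish this Stein--Weiss weighted Hardy--Littlewood--Sobolev inequality by reducing it to a one-dimensional convolution estimate through symmetrization followed by a logarithmic change of variable.

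First I would symmetrize. By the Riesz rearrangement inequality, each of the kernel factors $|x|^{-\alpha}$, $|x-y|^{-\lambda}$, $|y|^{-\beta}$ coincides with its own symmetric decreasing rearrangement (the hypothesis $\alpha+\beta\ge 0$ ensures the joint weight $|x|^{-\alpha}|y|^{-\beta}$ remains controllable at the origin after symmetrization), so we may assume $f,g\ge 0$ are radial and radially decreasing. Writing $f(x)=F(|x|)$, $g(y)=G(|y|)$ and integrating out the angles yields a one-dimensional bilinear form with kernel $\kappa(a,b)=\int_{S^{d-1}\times S^{d-1}}|a\omega-b\omega'|^{-\lambda}\,d\omega\,d\omega'$. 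Standard computations give $\kappa(a,b)\lesssim (a+b)^{-\lambda}$ plus, when $\lambda>d-1$, a diagonal contribution controlled by $|a-b|^{d-1-\lambda}\mathbf{1}_{|a-b|\lesssim\min(a,b)}$.

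Next I would make the logarithmic substitution $a=e^u$, $b=e^v$, absorbing the Jacobians $a^{d-1}\,da$, $b^{d-1}\,db$ and the radial weights $a^{-\alpha}$, $b^{-\beta}$ into new densities $\tilde F(u)$, $\tilde G(v)$. The scaling hypothesis $\frac{1}{r}+\frac{1}{s}+\frac{\lambda+\alpha+\beta}{d}=2$ is precisely the homogeneity condition that forces all exponents to line up so that $\tilde F\in L^r(\R)$ and $\tilde G\in L^s(\R)$, and the form becomes a translation-invariant convolution
\[
\int_\R\int_\R \tilde F(u)\tilde G(v)\,K(u-v)\,du\,dv.
\]
The strict inequalities $1-\frac{1}{r}-\frac{\lambda}{d}<\frac{\alpha}{d}<1-\frac{1}{r}$ translate into exponential decay of $K(w)$ as $w\to\pm\infty$, while $\lambda<d$ renders the singularity near $w=0$ integrable. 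Together these put $K\in L^m(\R)$ with $1+1/m=1/r'+1/s'$, which is meaningful exactly when $1/r+1/s\ge 1$, and Young's convolution inequality closes the argument.

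The hard part will be the near-diagonal piece of $\kappa(a,b)$ in the regime $\lambda\ge d-1$: after the logarithmic change of variable this contributes a fractional-integral kernel on $\R$, so rather than Young's inequality one must invoke the classical one-dimensional Hardy--Littlewood--Sobolev inequality to absorb it. The strictness of both bounds on $\alpha/d$ in the hypotheses is exactly what is needed for the resulting 1D HLS estimate to hold at an admissible non-endpoint exponent; the endpoint case would correspond to a logarithmic divergence here, consistent with the strict inequalities being imposed.
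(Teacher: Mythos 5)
The paper does not prove Lemma~\ref{lem:DWHLS}: it states it and refers to \cite{Stein1958}, where the inequality is established by a quite different argument (a dyadic/operator splitting according to the relative sizes of $|x|$ and $|y|$, together with Hardy-type and Schur-type estimates and duality, rather than rearrangement). Your reduction to a log-convolution is a legitimate alternative route, and after the reduction your analysis of the one-dimensional kernel is essentially correct: homogeneity of degree $-\lambda$ makes $K$ a function of $u-v$, the two strict bounds on $\alpha/d$ give exponential decay at $\pm\infty$, and $\lambda<d$ controls the $|w|^{d-1-\lambda}$ singularity at $w=0$.

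There is, however, a genuine gap in the first step. The Riesz (Brascamp--Lieb--Luttinger) rearrangement inequality lets you pass to $f^*, g^*$ only if \emph{each} fixed kernel factor is already symmetric decreasing. That requires $\alpha\ge 0$ and $\beta\ge 0$ \emph{separately}. The hypothesis only asserts $\alpha+\beta\ge 0$, and the other hypotheses comfortably permit, say, $\alpha<0<\beta$; in that case $|x|^{-\alpha}=|x|^{|\alpha|}$ is radially \emph{increasing}, its symmetric decreasing rearrangement is degenerate, and the rearrangement inequality simply does not apply. Your parenthetical remark that ``$\alpha+\beta\ge 0$ ensures the joint weight $|x|^{-\alpha}|y|^{-\beta}$ remains controllable after symmetrization'' does not repair this, since the BLL inequality acts factor by factor and cannot trade exponents between $x$ and $y$. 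So as a proof of the lemma as stated (with possibly one negative weight) the argument breaks at the start; one has to fall back on the Stein--Weiss operator-splitting proof, or otherwise reduce to the all-nonnegative case first. For the concrete application in the paper ($d=1$, $\alpha=\beta=(\tfrac12-\tfrac1q)(n-1)>0$) the symmetrization is fine, so your sketch does establish what is actually used.

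Two smaller points. Your Young exponent should read $\tfrac1m=\tfrac{1}{r'}+\tfrac{1}{s'}=2-\tfrac1r-\tfrac1s$, not $1+\tfrac1m=\tfrac{1}{r'}+\tfrac{1}{s'}$; with the correct formula, $m\ge 1$ is exactly $\tfrac1r+\tfrac1s\ge 1$, as you want. And once you use the scaling relation to compute $m$, you can check that $K\in L^m$ near $w=0$ whenever $(d-1)(\lambda-d)<\alpha+\beta$, which holds automatically from $\lambda<d$ and $\alpha+\beta\ge 0$ except in the edge case $d=1$, $\alpha+\beta=0$; so the one-dimensional HLS is only needed in that borderline situation, not whenever $\lambda\ge d-1$.
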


Now we proceed to prove \eqref{eq:thm2} for $q=\frac{4n+2}{2n-1}$.
Obviously, we have
\begin{align*}
\|S_\phi(t)P_k u_0\|_{L^q_{t,x}(\R^{n+1})} \le&\sum_{j\le
1-k}\|S_\phi(t)P_k u_0\|_{L^q_{t,x}(\R\times A_j)}
+\|S_\phi(t)P_k u_0\|_{L^q_{t,x}(\R\times \{|x|\geq 2^{1-k}\})}\\
:=&I+II.
\end{align*}
From step 1 we see that the term $I$ is bounded as desired. It
remains to bound the term $II$. Using \eqref{eq:decfree} we get
\begin{align*}
II \le&\|M(t,|x|)\|_{L^q_{t,x}(\R\times \{|x|\geq 2^{1-k}\})}+\|E(t,|x|)\|_{L^q_{t,x}(\R\times \{|x|\geq 2^{1-k}\})}\\
:=&II_1+II_2.
\end{align*}
From step 1 we see that the term $II_2$ is bounded as desired. Thus,
it remains to bound the term $II_1$. From symmetry, it suffices to
prove
\begin{align*}
&\normo{1_{[2^{1-k},\infty)}(r)r^{(\frac{1}{q}-\frac{1}{2})(n-1)}\int_{\R}
\psi_k(s)h(s)s^{\frac{n-1}{2}} e^{i(rs-t\phi(s))}
ds}_{L^q_{t,r}}\\
&\les 2^{(\frac{n}{2}-\frac{n+m(k)}{q})k+
(\frac{1}{4}-\frac{1}{2q})(m(k)-\alpha(k))k}\norm{h(s)s^{\frac{n-1}{2}}}_2
\end{align*}
which follows from the following estimate
\begin{align}\label{eq:weistr}
\normo{|r|^{(\frac{1}{q}-\frac{1}{2})(n-1)}\int_{\R}
\psi_0(s)h(s)e^{i(rs-t2^{-km(k)}\phi(2^ks))} ds}_{L^q_{t,r}}\les
2^{(\frac{1}{4}-\frac{1}{2q})(m(k)-\alpha(k))k} \norm{h}_2.
\end{align}

It remains to prove \eqref{eq:weistr}. Since $\psi_0(s)$ is
supported in $\{s\sim 1\}$, then from (H1)-(H4) we get that
$\phi_k=2^{-km(k)}\phi(2^ks)$ has an inverse denoted by
$\eta_k=\phi_k^{-1}: range(\phi_k)\to \{s\sim 1\}$, moreover,
\begin{align}\label{eq:etaderi}
|\eta_k'|\sim 1, \quad |\eta''|\sim 2^{k(\alpha(k)-m(k))}.
\end{align}
By a change of variable $s=\eta_k(\mu)$, we get that
\eqref{eq:weistr} is equivalent to
\begin{align}\label{eq:weistr2}
\normo{|r|^{(\frac{1}{q}-\frac{1}{2})(n-1)}\int_{\R}
\psi_0(\eta_k(\mu))h(\mu)e^{i(r\eta_k(\mu)-t\mu)}
d\mu}_{L^q_{t,r}}\les
2^{(\frac{1}{4}-\frac{1}{2q})(m(k)-\alpha(k))k} \norm{h}_2
\end{align}
For $f\in L^2(\R)$, define operator
\[Tf(x,t)=|x|^{(\frac{1}{q}-\frac{1}{2})(n-1)}\int_{\R}
\psi_0(\eta_k(\mu))f(\mu)e^{i(x\eta_k(\mu)-t\mu)} d\mu.\] It
suffices to prove $\norm{T}_{L^2\to L^q_{t,x}}\les
2^{(\frac{1}{4}-\frac{1}{2q})(m(k)-\alpha(k))k}$. By duality, we
have
\[T^*g(\mu)=\psi_0(\eta_k(\mu))\int_{\R\times \R} e^{-i(x\eta_k(\mu)-t\mu)}|x|^{(\frac{1}{q}-\frac{1}{2})(n-1)}g(x,t)dxdt.\]
By the $TT^*$ arguments, it suffices to prove
\[\norm{TT^*g}_{L^q}\les 2^{(\frac{1}{2}-\frac{1}{q})(m(k)-\alpha(k))k} \norm{g}_{L^{q'}}.\]
From the definition we have
\begin{align*}
TT^*g(x,t)=&|x|^{(\frac{1}{q}-\frac{1}{2})(n-1)}\int
\psi_0^2(\eta_k(\mu))
e^{-i(y\eta_k(\mu)-\tau \mu)}{|y|}^{(\frac{1}{q}-\frac{1}{2})(n-1)}g(y,\tau) e^{i(x\eta_k(\mu)-t\mu)}d\mu dy d\tau\\
=&|x|^{(\frac{1}{q}-\frac{1}{2})(n-1)}\int_{\R^2}
K(x-y,t-\tau){|y|}^{(\frac{1}{q}-\frac{1}{2})(n-1)}g(y,\tau)dy
d\tau,
\end{align*}
where
\[K(x-y,t-\tau)=\int \psi_0^2(\eta_k(\mu))e^{i[(x-y)\eta_k(\mu)-(t-\tau)\mu]}d\mu.\]
Using Plancherel's equality, we get
\[\normo{\int K(x-y,t-\tau)g(y,\tau)d\tau}_{L^2_t}\les \norm{g(y,\cdot)}_{L^{2}}.\]
On the other hand, it follows from Van der Corput lemma and
\eqref{eq:etaderi} that
\[|K(x-y,t-\tau)|\les 2^{\frac{k(m(k)-\alpha(k))}{2}}|x-y|^{-1/2}.\]
Then by interpolation we have
\[\normo{\int K(x-y,t-\tau)g(y,\tau)d\tau}_{L^q_t}\les
2^{k(m(k)-\alpha(k))(\frac{1}{2}-\frac{1}{q})}|x-y|^{-(\frac{1}{2}-\frac{1}{q})}\norm{g(u,\cdot)}_{L^{q'}}.\]
Using Minkowski inequality we obtain
\begin{align*}
\norm{TT^*g}_{L^q_{x,t}}\les
2^{k(m(k)-\alpha(k))(\frac{1}{2}-\frac{1}{q})}\normo{|x|^{(\frac{1}{q}-\frac{1}{2})(n-1)}\int{|y|}^{(\frac{1}{q}-\frac{1}{2})(n-1)}\norm{g(y,\cdot)}_{L^{q'}}|x-y|^{-(\frac{1}{2}-\frac{1}{q})}
dy}_{L^q_x}.
\end{align*}
To complete the proof, it suffices to prove
\begin{align}
\aabs{\int_\R \int_\R
\frac{g(y)f(x)}{|x|^{(\frac{1}{2}-\frac{1}{q})(n-1)}
{|y|}^{(\frac{1}{2}-\frac{1}{q})(n-1)}|x-y|^{(\frac{1}{2}-\frac{1}{q})}}dxdy}\les
\norm{g}_{L^{q'}} \norm{f}_{L^{q'}},
\end{align}
which follows immediately from Lemma \ref{lem:DWHLS}, since it is
easy to verify the condition with $q=\frac{4n+2}{2n-1}$,
$\alpha=\beta=(\frac{1}{2}-\frac{1}{q})(n-1)$,
$\lambda=\frac{1}{2}-\frac{1}{q}$, $r=s=q'$, $d=1$. Therefore, we
complete the proof.

\noindent{\bf Step 3.} Sharpness.

It remains to prove that the range of $q$ is optimal. We will prove
that $\norm{e^{it\sqrt{-\Delta}}P_0u_0}_{L^q_{t,x}}\les
\norm{u_0}_2$ fails if $q\leq \frac{2n}{n-1}$, and
$\norm{e^{it\Delta}P_0u_0}_{L^q_{t,x}}\les \norm{u_0}_2$ fails if
$q<\frac{4n+2}{2n-1}$. For the former one, from the proof in step 1
we see that it suffices to disprove: for $q=\frac{2n}{n-1}$
\begin{align}\label{eq:coutwave}
\normo{r^{\frac{n-1}{q}}r^{-\frac{n-1}{2}}\int_{\R}
\psi_0(s)h(s)\cos(rs-(n-1)\pi/4)e^{its} ds}_{L_{t,r\geq 2}^q}\les
\norm{h}_2.
\end{align}
Indeed, by taking $h(s)=1_{[0,10]}(s)$, and from the fact that for
$r\gg 1$
\[
\normo{\int_{\R} \psi_0(s)\cos(rs-\frac{(n-1)\pi}{4})e^{its}
ds}_{L_{|t-r|\leq 1}^q}\ges \norm{
c\hat{\psi_0}(t+r)+\bar{c}\hat{\psi_0}(t-r)}_{L^q_{|t-r|\leq 1}}\ges
1,
\]
we obtain that $\normo{r^{\frac{n-1}{q}}r^{-\frac{n-1}{2}}\int_{\R}
\psi_0(s)h(s)\cos(rs-(n-1)\pi/4)e^{its} ds}_{L_{t,r\geq
2}^q}=\infty$. Thus \eqref{eq:coutwave} fails if $q=\frac{2n}{n-1}$.

To see the latter one, similarly, it suffices to disprove: for
$q<\frac{4n+2}{2n-1}$
\begin{align}\label{eq:coutsch}
\normo{r^{\frac{n-1}{q}}r^{-\frac{n-1}{2}}\int_{\R}
\psi_0(s)h(s)\cos(rs-(n-1)\pi/4)e^{its^2} ds}_{L_{t,r\geq 2}^q}\les
\norm{h}_2.
\end{align}
Indeed, fix a $j$ sufficiently large and take
$h(s)=2^{j/2}1_{|s-1|\les 2^{-j}}$. Then $\norm{h}_2=1$. For $t>0$,
the main contribution of
$r^{\frac{n-1}{q}}r^{-\frac{n-1}{2}}\int_{\R}
h(s)\cos(rs-(n-1)\pi/4)e^{its^2} ds$ is
\begin{align*}
c_nr^{\frac{n-1}{q}}r^{-\frac{n-1}{2}}\int_{\R}
h(s)e^{-irs}e^{its^2}ds=c_n2^{j/2}r^{\frac{n-1}{q}}r^{-\frac{n-1}{2}}\int_{\R}
1_{|s|\leq 2^{-j}}(s)e^{-irs}e^{its^2}e^{i2ts} ds.
\end{align*}
Thus the left-hand side of \eqref{eq:coutsch} is larger than
\[\normo{2^{j/2}r^{\frac{n-1}{q}}r^{-\frac{n-1}{2}}\int_{\R}
1_{|s|\leq 2^{-j}}(s)e^{-irs}e^{its^2}e^{i2ts} ds}_{L^q_{r\sim
2^{2j},|r-2t|\les 2^j}}\ges 2^{j(\frac{2n+1}{q}-\frac{2n-1}{2})}\]
which is unbounded if $q<\frac{4n+2}{2n-1}$. Therefore, we complete
the proof of Theorem \ref{thm:main}.

Next we prove Theorem \ref{thm:mainSch}. First we give the following
maximal function estimates, which generalize the results in
\cite{KPV} for $a\geq 2$ to $a>0$.

\begin{lemma}\label{lem:max}
Assume $a>0$ and $k\geq 0$. Then
\begin{align}\label{eq:lemmax}
\normo{\int_\R
e^{it|\xi|^a}e^{ix\xi}\eta_{0}(\xi/2^k)f(\xi)d\xi}_{L_x^2L_{|t|\les
1}^\infty}\les B(a,k)\norm{f}_2,
\end{align}
where \[B(a,k)=\left\{
\begin{array}{l}
2^{{ak/4}},\quad a\ne 1,\\
2^{{k/2}}, \quad a=1.
\end{array}
\right.\] Moreover, the bounds are sharp.
\end{lemma}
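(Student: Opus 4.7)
My plan is to linearize the maximal function and carry out a $TT^\ast$ argument. Pick a measurable selector $t^\ast:\R\to\R$ with $|t^\ast(x)|\lesssim 1$ that nearly saturates the $L^\infty_t$-supremum pointwise in $x$, and consider the linear operator
\[
Sf(x):=\int_\R e^{it^\ast(x)|\xi|^a+ix\xi}\eta_0(\xi/2^k)f(\xi)\,d\xi.
\]
By the $TT^\ast$ identity it suffices to bound $\|SS^\ast\|_{L^2\to L^2}\lesssim B(a,k)^2$, where the kernel of $SS^\ast$ is
\[
K(x,y)=\int_\R e^{i(t^\ast(x)-t^\ast(y))|\xi|^a+i(x-y)\xi}\eta_0(\xi/2^k)^2\,d\xi,
\]
on which I will run Schur's test.

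For $a\ne 1$ I rescale $\xi=2^k\eta$ and set $\alpha=2^{ak}(t^\ast(x)-t^\ast(y))$, $\beta=2^k(x-y)$. Since $|\eta|\sim 1$ on the support of $\eta_0^2$, the stationary point of $\alpha|\eta|^a+\beta\eta$ lies in the support iff $|\alpha|\sim|\beta|$, and in that case the second derivative $\alpha a(a-1)|\eta|^{a-2}$ has size $\sim|\alpha|$. Van der Corput (Lemma~\ref{lem:vander}) on the resonant region, together with repeated integration by parts off it, yields
\[
|K(x,y)|\lesssim 2^k\min\bigl(1,(2^{ak}|t^\ast(x)-t^\ast(y)|)^{-1/2}\bigr)\mathbf{1}_{|\alpha|\sim|\beta|}+(\text{rapidly decaying off-resonance}).
\]
To estimate $\sup_x\int|K(x,y)|\,dy$, I decompose dyadically by $T:=|t^\ast(x)-t^\ast(y)|\sim 2^{-j}$: the resonance condition $|\alpha|\sim|\beta|$ forces $y$ into a set of measure $\lesssim 2^{(a-1)k}T$, which is a purely geometric constraint independent of $t^\ast$. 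The level-$j$ contribution is therefore $\lesssim 2^k(2^{ak}T)^{-1/2}\cdot 2^{(a-1)k}T=2^{ak/2}T^{1/2}$, and summing the resulting geometric series over $j\ge 0$ gives $\sup_x\int|K(x,y)|\,dy\lesssim 2^{ak/2}$. Symmetry and Schur then deliver $\|S\|_{L^2\to L^2}\lesssim 2^{ak/4}=B(a,k)$.

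The case $a=1$ is more elementary: $|\eta|$ is linear on each half of the support of $\eta_0$, so $K$ splits into two rapidly decaying bumps localised on the ``cones'' $(x-y)\pm(t^\ast(x)-t^\ast(y))=0$; alternatively, a Sobolev embedding in $t$ combined with Plancherel, noting that each $\partial_t$ costs $|\xi|\sim 2^k$, produces the $2^{k/2}$ bound directly. Sharpness in both cases is checked by a Knapp/chirp-type modulated example $f(\xi)=e^{-ix_0\xi-it_0|\xi|^a}\eta_0(\xi/2^k)$, which creates a coherent peak of amplitude $\sim 2^k$ on a space-time tile of dimensions $2^{-k}\times 2^{-ak}$ and yields the matching lower bound on $\|Tf\|_{L^2_xL^\infty_t}/\|f\|_{L^2}$. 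The main obstacle is the resonant-region analysis for $a\ne 1$: because $t^\ast$ is only measurable, the $y$-integration cannot exploit any regularity of $t^\ast$, and the only dimensionally correct control on the size of the relevant $y$-set at time-scale $T$ is the geometric constraint $|x-y|\sim 2^{(a-1)k}T$ supplied by the stationary phase itself, which must be balanced carefully against the $|\alpha|^{-1/2}$ decay from van der Corput in order to produce exactly $2^{ak/2}$.
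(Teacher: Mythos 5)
Your upper-bound argument is essentially the paper's, expressed through a standard alternative route. The paper rescales to put the time interval at $|t|\lesssim 2^{ka}$, runs $TT^\ast$ directly at the level of the $L^2_xL^1_t\to L^2_xL^\infty_t$ convolution, and applies Young's inequality to bound everything by $\norm{K_a}_{L^1_xL^\infty_t}$, where $K_a$ is the two-dimensional (translation-invariant) kernel; stationary phase then gives the bound. You linearize the supremum with a measurable selector $t^\ast$ and run Schur's test on $K(x,y)$. These are equivalent: your Schur quantity $\sup_x\int|K(x,y)|\,dy$ is dominated by replacing $t^\ast(x)-t^\ast(y)$ with a free parameter $T$ and taking $\sup_{|T|\lesssim 1}$ inside the integral, which is exactly $\norm{K}_{L^1_zL^\infty_T}$ — the same quantity. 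Your dyadic resonance bookkeeping is a correct way of computing it, though you should note the Van der Corput gain only applies for $T\gtrsim 2^{-ak}$; the complementary range contributes $O(1)$ since the resonant $y$-sets are disjoint across scales.

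There is, however, a genuine error in your sharpness argument for $a\ne 1$. The modulated bump $f(\xi)=e^{-ix_0\xi-it_0|\xi|^a}\eta_0(\xi/2^k)$ has $\norm{f}_2\sim 2^{k/2}$ and produces a wave packet of frequency width $\sim 2^k$, which disperses on time scale $2^{-ak}$: for $|\tau|=|t-t_0|>2^{-ak}$ the peak amplitude drops to $\sim 2^k(2^{ak}|\tau|)^{-1/2}$ while it travels along the group-velocity ray $x-x_0\approx -a\,2^{(a-1)k}\tau$. Tracking the best $t$ for each $x$ and integrating, one finds $\norm{u}_{L^2_xL^\infty_{|t|\lesssim 1}}\lesssim 2^{k/2}(1+ak)^{1/2}$, so the ratio is only $(1+ak)^{1/2}$ — logarithmic, not the required $2^{ak/4}$. (For $a=1$ your example does work, because the dispersion relation is linear and the packet stays coherent across the whole window $|t|\lesssim 1$, as the paper's one-sided frequency example also exhibits.) To get sharpness for $a\ne 1$ you need a much narrower frequency bump, as the paper does: after rescaling to $k=0$, take $f=\theta^{-1/2}1_{|\xi-1|\lesssim\theta}$ with $\theta=2^{-ka/2}$. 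Because $|(\xi+1)^a-1-a\xi|\lesssim\xi^2\lesssim\theta^2$, the packet stays coherent (no quadratic dephasing) over $|x|\lesssim\theta^{-2}=2^{ka}$ when $t$ is slaved to $x$ via $t=-x/a$, yielding a lower bound $\theta^{-1/2}=2^{ka/4}$ with $\norm{f}_2\sim 1$. The key point missing from your example is that the packet's frequency width must be tuned so that the quadratic term in the phase stays $O(1)$ throughout the entire time window $|t|\lesssim 1$.
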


\begin{proof}
By change of variables: $\xi=2^k\eta$ and then $x=2^{-k}y$, we get
that \eqref{eq:lemmax} is equivalent to
\begin{align}\label{eq:lemmaxpf1}
\normo{\int_\R
e^{it|\xi|^a}e^{ix\xi}\eta_{0}(\xi)f(\xi)d\xi}_{L_x^2L_{|t|\les
2^{ka}}^\infty}\les B(a,k)\norm{f}_2.
\end{align}
By $TT^*$ methods, \eqref{eq:lemmaxpf1} is equivalent to
\begin{align}\label{eq:lemmaxpf2}
\normo{\int_{\R^2}\left[\int_\R
e^{i(t-t')|\xi|^a}e^{i(x-x')\xi}\eta_{0}(\xi)d\xi\right]
g(t',x')dt'dx'}_{L_x^2L_{|t|\les 2^{ka}}^\infty}\les
B(a,k)^2\norm{g}_{L_x^2L_{|t|\les 2^{ka}}^1}.
\end{align}
Denote $K_a(x-x',t-t')=\int_\R
e^{i(t-t')|\xi|^a}e^{i(x-x')\xi}\eta_{0}(\xi)d\xi$. By Stationary
phase and Van der Corput lemma, since $|t-t'|\les 2^{ka}$, it is
easy to see that for $a\ne 1$
\[|K_a(x-x',t-t')|\les (1+|x-x'|)^{-1/2}1_{|x-x'|\les 2^{ka}}+|x-x'|^{-4}1_{|x-x'|\gg 2^{ka}}\]
and for $a=1$
\[|K_1(x-x',t-t')|\les 1\cdot 1_{|x-x'|\les 2^{k}}+|x-x'|^{-4}1_{|x-x'|\gg 2^{k}}.\]
Using bounds above and Young's inequality, we get
\begin{align*}
\normo{\int_{\R^2}K_a(x-x',t-t') g(t',x')dt'dx'}_{L_x^2L_{|t|\les
2^{ka}}^1}\les&
\norm{K_a}_{L_x^1L_t^\infty}\norm{g}_{L_x^2L_{|t|\les
2^{ka}}^1}\\
\les& B(a,k)^2\norm{g}_{L_x^2L_{|t|\les 2^{ka}}^1}.
\end{align*}
Thus we obtain the bounds as desired.

It remains to show that the bounds are sharp. First we consider
$a=1$. For $f$ supported in $\{\xi>0\}$, we have
\[L.H.S \mbox{ of } \eqref{eq:lemmaxpf1}\ges \normo{\int_\R
e^{-ix\xi}e^{ix\xi}\eta_{0}(\xi)f(\xi)d\xi}_{L_{|x|\les 2^k}^2}\ges
2^{k/2}\] which shows the sharpness of the bound $2^{k/2}$. Now we
consider $a\ne 1$. Take $f=\theta^{-1/2}1_{|\xi-1|\les \theta}$,
$\theta=2^{-ka/2}$. Then $\norm{f}_2\sim 1$, and
\begin{align*}
L.H.S \mbox{ of } \eqref{eq:lemmaxpf1}\ges&
\theta^{-1/2}\normo{\int_{|\xi|\les \theta}
e^{it(\xi+1)^a}e^{-it}e^{ix\xi}d\xi}_{L_{|x|\les \theta^{-2}}^2L^\infty_{|t|\les 2^{ka}}}\\
\ges& \theta^{-1/2}\normo{\int_{|\xi|\les \theta}
e^{-ix(\xi+1)^a/a}e^{ix/a}e^{ix\xi}d\xi}_{L_{|x|\les
\theta^{-2}}^2}\\
\ges& \theta^{-1/2}=2^{ka/4}
\end{align*}
where in the last inequality we used the fact that
$|(\xi+1)^a-1-a\xi|\les \xi^2$. Thus we complete the proof of the
lemma.
\end{proof}

We present the proof of Theorem \ref{thm:main} in the following two
cases.

{\bf Case 1:} $a\ne 1$.

First we assume that $a\ne 1$. Since \eqref{eq:Stri} is trivial if
$(q,r)=(\infty,2)$, thus by Bernstein's inequality, Riesz-Thorin
interpolation and the classical Strichartz estimates, it suffices to
prove \eqref{eq:Stri} for $(q,r)=(2,r)$, where
$\frac{4n-2}{2n-3}<r<\frac{2n}{n-2}$.

By the scaling transform $(t,x)\to (\lambda^a t, \lambda x)$,
clearly we may assume $k=0$. By the classical Strichartz estimates
(see \cite{KT} for $n\geq 3$ and \cite{Tao2} for $n=2$):
$
\norm{e^{itD^a}P_0f}_{L_t^2L_x^{\frac{2n}{n-2}}}\leq C
\norm{f}_{L_x^2},
$
then we see that from H\"older's inequality, it suffices to prove
\begin{align}\label{eq:Stripf1}
\norm{e^{itD^a}P_0f}_{L_t^2L_{|x|\geq 10}^{r}}\leq C
\norm{f}_{L_x^2}.
\end{align}
As before we divide $u_a(t,|x|)=e^{itD^a}P_0f$ into two parts: the
main term and the error term, namely
\begin{align}\label{eq:decfree}
u_a(t,|x|)=M_a(t,|x|)+E_a(t,|x|)
\end{align}
with
\begin{align*}
{M}_a(t,r)=&c_nr^{-\frac{n-1}{2}}\int_{\R}
\psi_0(s)g(s)s^{\frac{n-1}{2}} e^{i(rs-ts^a)}
ds+\bar{c_n}r^{-\frac{n-1}{2}}\int_{\R} \psi_0(s)
g(s)s^{\frac{n-1}{2}}e^{-i(rs+ts^a)} ds,\\
{E}_a(t,r)=&c_1\int_{\R} \psi_0(s)g(s)s^{n-1}e^{-its^a-
irs}E_{+}(rs)ds-c_2\int_{\R} \psi_0(s)g(s)s^{n-1}e^{-its^a+
irs}E_{-}(rs)ds.
\end{align*}
First we bound the main term. We have
\begin{lemma}
(a) Assume $a\ne 1$, $a>0$, $j\geq 2$ and $2\leq r\leq \infty$. Then
\begin{align}
\|M_a(t,|x|)\|_{L^2_tL_x^r(\R\times A_j)}\les&
2^{j(\frac{2n-1}{2r}-\frac{2n-3}{4})}\norm{f}_{L^2}.\label{eq:Ma}
\end{align}
\end{lemma}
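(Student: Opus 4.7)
The plan is to reduce the bound to two endpoint estimates in the spatial exponent and then interpolate. For $r=2$, I would invoke the already-established $L^2_{t,x}$ estimate \eqref{eq:mainterm2} of Lemma \ref{lem:main}(a), specialized to $k=0$ and $\phi(s)=s^a$ (so that $m(0)=a$); this delivers $\|M_a\|_{L^2_{t,x}(\R\times A_j)}\lesssim 2^{j/2}\|f\|_2$, matching the target exponent $1/2$ at $r=2$.

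For $r=\infty$ I would prove the stronger pointwise-in-$r$ claim $\|M_a(t,r)\|_{L^2_t}\lesssim r^{-(n-1)/2}\|f\|_2$ for each fixed $r\in I_j$. Writing the first half of $M_a(t,r)$ as $c_n r^{-(n-1)/2}\int\psi_0(s)[g(s)s^{(n-1)/2}e^{irs}]e^{-its^a}\,ds$, I apply Lemma \ref{lemma:smoothing effect} with $k=0$, $q=2$: since $\phi(s)=s^a$ with $a\ne 1$ satisfies (H1)--(H2), the lemma gives $\|\int\psi_0(s)\varphi_r(s)e^{-its^a}ds\|_{L^2_t}\lesssim \|\psi_0\varphi_r\|_{L^2_s}\sim\|f\|_2$, where $\varphi_r(s)=g(s)s^{(n-1)/2}e^{irs}$ has the same $L^2_s$ norm as without the oscillatory factor $e^{irs}$. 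The Jacobian $|\phi'|^{-1/2}$ arising from the change of variable $u=s^a$ is harmless on the support of $\psi_0$. The second half of $M_a$ is treated identically. Since $M_a$ is radial, $\|M_a\|_{L^\infty_x(A_j)}=\sup_{r\in I_j}|M_a(t,r)|$, so taking the sup in $r\in I_j$ gives $\|M_a\|_{L^\infty_xL^2_t(\R\times A_j)}\lesssim 2^{-j(n-1)/2}\|f\|_2$, and Minkowski's integral inequality (valid since $2\le\infty$) yields $\|M_a\|_{L^2_tL^\infty_x(\R\times A_j)}\lesssim 2^{-j(n-1)/2}\|f\|_2$. This is actually stronger than the required $2^{-j(2n-3)/4}$ for every $n\ge 2$, so it certainly suffices.

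I would then close the argument by vector-valued Riesz--Thorin interpolation on the inner spatial $L^r_x$ norm (keeping the outer $L^2_t$ fixed) between the two endpoints above. A direct computation gives, for $2\le r\le\infty$,
\[
\|M_a\|_{L^2_tL^r_x(\R\times A_j)}\lesssim 2^{j(\frac{n}{r}-\frac{n-1}{2})}\|f\|_2,
\]
and the elementary identity $\frac{n}{r}-\frac{n-1}{2}-\bigl(\frac{2n-1}{2r}-\frac{2n-3}{4}\bigr)=\frac{1}{2r}-\frac{1}{4}\le 0$ holds precisely on our range $r\ge 2$, so the interpolated bound dominates the stated target and the claim follows. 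The main obstacle is the $r=\infty$ endpoint: its validity hinges on the hypothesis $a\ne 1$, which is exactly what makes $s\mapsto s^a$ a diffeomorphism with non-vanishing derivative on the support of $\psi_0$ and thus makes the smoothing-effect Lemma \ref{lemma:smoothing effect} applicable; this is also the feature that cleanly separates the present setting from the wave equation.
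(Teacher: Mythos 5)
Your $r=2$ endpoint and the interpolation bookkeeping are fine, but the $r=\infty$ endpoint is wrong, and the error is the Minkowski step. Minkowski's integral inequality runs in the opposite direction: for $2\le\infty$ it gives $\|F\|_{L^\infty_x L^2_t}\le \|F\|_{L^2_t L^\infty_x}$, not the reverse. Starting from the pointwise-in-$r$ smoothing estimate you correctly obtain $\|M_a\|_{L^\infty_x L^2_t(\R\times A_j)}\lesssim 2^{-j(n-1)/2}\|f\|_2$, but this is the \emph{smaller} of the two mixed norms; $\|M_a\|_{L^2_t L^\infty_x}$ can be strictly larger, and no reshuffling of norms lets you pass from a $\sup$-outside bound to a $\sup$-inside bound for free. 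This is exactly the distinction between a fixed-point smoothing estimate and a genuine maximal function estimate, and it is the heart of the lemma.

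The bound you assert at $r=\infty$, namely $2^{-j(n-1)/2}$, is also numerically stronger than the stated target $2^{-j(2n-3)/4}$ by a factor $2^{-j/4}$. The paper's proof uses the maximal function estimate of Lemma \ref{lem:max} (after the rescaling $\xi=s2^{aj}$, $t=2^{aj}x$, which turns the relevant norm into $L^2_x L^\infty_{|t|\le 2}$ for the dispersion $\xi^{1/a}$ at frequency $2^{aj}$), and that lemma carries a loss $B(1/a,aj)=2^{j/4}$. That loss is declared sharp in Lemma \ref{lem:max}. Hence the extra factor $2^{j/4}$ is not an artifact; the $r=\infty$ estimate you claim is in general false, and with it the improved interpolated bound $2^{j(n/r-(n-1)/2)}$. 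So there is a genuine gap: the $r=\infty$ endpoint must be proved as a maximal estimate (as in Lemma \ref{lem:max}), it cannot be reduced to the pointwise-in-$r$ smoothing estimate via Minkowski. Once that is supplied, the Riesz--Thorin interpolation with your $r=2$ endpoint does give the stated exponent $\frac{2n-1}{2r}-\frac{2n-3}{4}$ (with $\frac{2n-3}{4}$ rather than $\frac{n-1}{2}$), matching the paper.
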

\begin{proof}
For $r=2$, it was proven in Lemma \ref{lem:main}. By Riesz-Thorin
interpolation, it suffices to prove for $r=\infty$. By the
definition of $M_a$ and symmetry, it suffices to show
\begin{align}\label{eq:mainpf1}
2^{-\frac{(n-1)j}{2}}\normo{\int_{\R} \eta(s)g(s) e^{i(rs^{1/a}-ts)}
ds}_{L_t^2L_r^\infty(\R\times I_j)}\les \norm{g}_2
\end{align}
where $\eta(s)$ is a bump function on $\{s\sim 1\}$. By making
change of variables $\xi=s2^{aj}$, $t=2^{aj}x$, we see that it
suffices to prove
\begin{align}\label{eq:mainpf2}
2^{-\frac{(n-1)j}{2}}\normo{\int_{\R} \eta(\xi/2^{aj})g(\xi)
e^{i(t\xi^{1/a}-x\xi)} d\xi}_{L_x^2L_{|t|\leq
2}^\infty}\les2^{-\frac{(2n-3)j}{4}} \norm{g}_2
\end{align}
which reduces to a maximal function estimate associated to the
dispersion $\xi^{1/a}$. Since $a\ne 1$, then \eqref{eq:mainpf2}
follows immediately from Lemma \ref{lem:max}.
\end{proof}

Next, we estimate the error term $E_a(t,|x|)$. This term certainly
has better estimates than the main term, but for our purpose, the
following rough estimates will be enough.
\begin{lemma}
Assume $a\ne 1$, $j\geq 2$ and $2\leq r\leq \frac{2n}{n-2}$. Then
\begin{align}\label{eq:main}
\|E_a(t,|x|)\|_{L^2_tL_x^r(\R\times A_j)}\les
2^{-\frac{j}{2}(\frac{n}{r}-\frac{n-2}{2})}\norm{f}_{L^2}.
\end{align}
\end{lemma}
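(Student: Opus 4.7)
My plan is to exploit the strong polynomial decay $|E_\pm(\rho)|\lesssim \rho^{-(n+1)/2}$ available on the integrand's support. Since $j\ge 2$ and $s\in \supp\psi_0\subset\{s\sim 1\}$, on $A_j$ one has $rs\sim 2^j\gg 1$, so $|E_\pm(rs)|\lesssim 2^{-j(n+1)/2}$ uniformly in $s$. Combined with the local-smoothing bound of Lemma~\ref{lemma:smoothing effect}, this should handle the full range $2\le r\le \tfrac{2n}{n-2}$ in one shot, without the two-endpoint interpolation that was needed for $M_a$.

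Concretely, I would first apply Minkowski's inequality (valid since $r\ge 2$) to swap the order of norms:
\[\|E_a\|_{L^2_tL^r_x(\R\times A_j)}\le \big\|\,\|E_a(\cdot,r)\|_{L^2_t}\,\big\|_{L^r_x(A_j)}.\]
For each fixed $r$, the inner integral has exactly the form handled by Lemma~\ref{lemma:smoothing effect} with $k=0$, $q=2$, $\phi(s)=s^a$, and $\varphi(s)=g(s)s^{n-1}e^{\pm irs}E_\pm(rs)$, yielding
\[\|E_a(\cdot,r)\|_{L^2_t}\lesssim \big\|\psi_0(s)g(s)s^{n-1}E_\pm(rs)\big\|_{L^2_s}\lesssim 2^{-j(n+1)/2}\|f\|_{L^2},\]
where the decay of $E_\pm$ is used by pulling an $L^\infty_s$ bound out of the $L^2_s$ norm (the factor $e^{\pm irs}$ is harmless). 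Since this bound is independent of $r$, the outer $L^r_x(A_j)$ norm contributes only a volume factor $|A_j|^{1/r}\sim 2^{jn/r}$, giving
\[\|E_a\|_{L^2_tL^r_x(\R\times A_j)}\lesssim 2^{j(n/r-(n+1)/2)}\|f\|_{L^2}.\]

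To close the proof, I would verify that $j\bigl(\tfrac{n}{r}-\tfrac{n+1}{2}\bigr)\le -\tfrac{j}{2}\bigl(\tfrac{n}{r}-\tfrac{n-2}{2}\bigr)$ for every $r\ge 2$; this reduces to $\tfrac{3n}{2r}\le \tfrac{3n}{4}$, which is precisely $r\ge 2$. The two sides coincide at $r=2$ (both giving $2^{-j/2}$), while at $r=\tfrac{2n}{n-2}$ my Minkowski bound is in fact strictly better ($2^{-3j/2}$ versus $1$), consistent with the paper's remark that \eqref{eq:main} is a deliberately ``rough'' estimate just strong enough to sum with the main-term bound \eqref{eq:Ma}. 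I do not foresee any real obstacle: the error term genuinely enjoys polynomial decay, so no maximal-function argument (as in Lemma~\ref{lem:max}) is required, and the smoothing estimate alone suffices.
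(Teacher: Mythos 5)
Your Minkowski step is in the wrong direction, and this breaks the argument for every $r>2$. Recall the general mixed-norm Minkowski inequality
\[
\normo{\,\norm{F}_{L^q_y}\,}_{L^p_x}\ \le\ \normo{\,\norm{F}_{L^p_x}\,}_{L^q_y}\qquad\text{requires }q\le p,
\]
i.e.\ the \emph{inner} exponent on the left must be the \emph{smaller} one. With the paper's convention $\norm{E_a}_{L^2_tL^r_x}=\normo{\norm{E_a(t,\cdot)}_{L^r_x}}_{L^2_t}$, the inner exponent is $r$ and the outer is $2$, so the inequality you assert, $\normo{\norm{E_a}_{L^r_x}}_{L^2_t}\le\normo{\norm{E_a}_{L^2_t}}_{L^r_x}$, is valid only when $r\le 2$. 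For $r>2$ Minkowski gives precisely the opposite inequality, so the quantity you actually compute,
\[
\normo{\,\norm{E_a(\cdot,\rho)}_{L^2_t}\,}_{L^r_{\rho\in I_j}}\ \les\ 2^{j\big(\frac{n}{r}-\frac{n+1}{2}\big)}\norm{f}_2,
\]
is a \emph{lower} bound, not an upper bound, for $\norm{E_a}_{L^2_tL^r_x(\R\times A_j)}$. A uniform-in-$\rho$ bound on $\norm{E_a(\cdot,\rho)}_{L^2_t}$ plus a volume factor does not control the $L^2_tL^r_x$ norm; a function like $\mathbf{1}_{|t-\rho|<1}$ on $\R_t\times I_j$ has $\normo{\norm{\cdot}_{L^2_t}}_{L^r_\rho}\sim 2^{j/r}$ but $\norm{\cdot}_{L^2_tL^r_\rho}\sim 2^{j/2}$, which is much larger. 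At $r=2$ the swap is just Fubini and your computation reduces to the already-known Lemma~\ref{lem:error}, so you only recover that endpoint.

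The paper instead interpolates between two endpoints: at $r=2$ the bound $2^{-j/2}$ follows from Lemma~\ref{lem:error} (your local-smoothing computation), while at $r=\frac{2n}{n-2}$ one writes $E_a=u_a-M_a$ and uses the classical endpoint Strichartz estimate $\norm{u_a}_{L_t^2L_x^{2n/(n-2)}}\les\norm{f}_2$ together with the already-proved main-term bound \eqref{eq:Ma} (which at $r=\frac{2n}{n-2}$ gives a negative power of $2^j$ and is hence $\les 1$). Riesz--Thorin interpolation of the linear map $f\mapsto E_a$ then yields exactly $2^{-\frac j2(\frac nr-\frac{n-2}{2})}$ for the whole range $2\le r\le\frac{2n}{n-2}$. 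The point is that the error-term decay alone does not let you avoid information about $L^r_x$ at fixed $t$; the classical Strichartz endpoint supplies precisely that missing ingredient.
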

\begin{proof}
For $r=2$, it was proven in Lemma \ref{lem:error}. For
$r=\frac{2n}{n-2}$ we have
\[\norm{E_a(t,|x|)}_{L_t^2L_x^{\frac{2n}{n-2}}(\R\times
A_j)}\leq
\norm{u_a(t,|x|)}_{L_t^2L_x^{\frac{2n}{n-2}}}+\norm{M_a(t,|x|)}_{L_t^2L_x^{\frac{2n}{n-2}}(\R\times
A_j)}\les \norm{f}_{2}\] where we used the classical endpoint
Strichartz estimates and Lemma \ref{lem:main}.
\end{proof}

We are ready to prove \eqref{eq:Stripf1}. Indeed, since
$\frac{4n-2}{2n-3}<r<\frac{2n}{n-2}$, by Lemma \ref{lem:main} and
Lemma \ref{lem:error}, we can sum over $j\geq 1$:
\[\norm{e^{itD^a}P_0f}_{L_t^2L_{|x|\geq 10}^{r}}\leq \sum_{j=1}^\infty \norm{M_a(t,|x|)}_{L_t^2L_{x}^{r}(\R\times
A_j)}+\sum_{j=1}^\infty \norm{E_a(t,|x|)}_{L_t^2L_{x}^{r}(\R\times
A_j)}\les \norm{f}_2.\]

{\bf Case 2.} $a=1$ and $n\geq 3$.

As in Case 1, it suffices to prove \eqref{eq:Stri} for
$(q,r)=(2,r)$, where $\frac{2n-2}{n-2}<r<\frac{2n-2}{n-3}$. Using
the decomposition \eqref{eq:decfree} and the following lemma, we
immediately obtain \eqref{eq:Stri}.
\begin{lemma}
Assume $j\geq 2$ and $2\leq r\leq \infty$, $2\leq q\leq
\frac{2n-2}{n-3}$. Then
\begin{align*}
\|M_1(t,|x|)\|_{L^2_tL_x^r(\R\times A_j)}\les
2^{j(\frac{n-1}{r}-\frac{n-2}{2})}\norm{f}_{L^2},\quad
\|E_1(t,|x|)\|_{L^2_tL_x^q(\R\times A_j)}\les
2^{-(\frac{n-1}{2q}-\frac{n-3}{4})}\norm{f}_{L^2}.
\end{align*}
\end{lemma}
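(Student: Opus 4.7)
The argument follows the template of Case~1 but with two substitutions forced by the degeneracy $\phi'(s)\equiv 1$: the non-degenerate maximal estimate of Lemma~\ref{lem:max} is replaced by its $a=1$ version (bound $2^{k/2}$ instead of $2^{k/4}$), and the endpoint Schr\"odinger Strichartz estimate is replaced by the endpoint wave Strichartz estimate.

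For the main term $M_1$, the case $r=2$ is immediate from Lemma~\ref{lem:main}(a) with $\phi(r)=r$ and $k=0$, giving $2^{j/2}=2^{j(\frac{n-1}{2}-\frac{n-2}{2})}$. For $r=\infty$, the key observation is that since the phase is linear in $s$, each summand of $M_1(t,r)$ equals $r^{-(n-1)/2}$ times the Fourier transform $F$ of the compactly supported $L^2$ function $\psi_0(s)h(s)s^{(n-1)/2}$ evaluated at $\pm(r\mp t)$, with $\|F\|_{L^2}\lesssim\|f\|_{L^2}$. It thus suffices to prove
\[
\bigl\|F(r-t)\bigr\|_{L^2_tL^\infty_{r\in I_j}}\lesssim 2^{j/2}\|F\|_{L^2},
\]
which after rescaling is precisely Lemma~\ref{lem:max} in the degenerate case $a=1$. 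Equivalently, one derives it directly from the pointwise inequality
\[
\sup_{r\in I_j}|F(r-t)|^2\le \frac{1}{|I_j|}\int_{t-2^j}^{t-2^{j-1}}|F(u)|^2\,du+2\int_{t-2^j}^{t-2^{j-1}}|F(u)F'(u)|\,du
\]
(obtained by writing $F(r-t)^2=F(u_0)^2+2\int_{u_0}^{r-t}FF'$ and averaging in $u_0$), integrated in $t$, and combined with the Bernstein bound $\|F'\|_{L^2}\lesssim\|F\|_{L^2}$ coming from the unit-scale Fourier support of $F$. Multiplying by the prefactor $r^{-(n-1)/2}\sim 2^{-j(n-1)/2}$ then yields $\|M_1\|_{L^2_tL^\infty_x(A_j)}\lesssim 2^{-j(n-2)/2}\|f\|_{L^2}$, and Riesz--Thorin interpolation between $r=2$ and $r=\infty$ covers every $2\le r\le\infty$.

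For the error term $E_1$, the case $q=2$ is Lemma~\ref{lem:error} applied with $\phi(r)=r$ and $k=0$. At the other endpoint $q=\frac{2(n-1)}{n-3}$, write $E_1=u_1-M_1$ and combine the Keel--Tao endpoint wave Strichartz estimate
\[
\|e^{it\sqrt{-\Delta}}P_0f\|_{L^2_tL^{(2n-2)/(n-3)}_x(\R\times\R^n)}\lesssim\|f\|_{L^2}
\]
with the $M_1$ bound just proved at the same exponent, which contributes a harmless $2^{-j/2}\|f\|_{L^2}$. This yields $\|E_1\|_{L^2_tL^{(2n-2)/(n-3)}(A_j)}\lesssim\|f\|_{L^2}$, matching the stated bound at that endpoint. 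Riesz--Thorin interpolation between the two endpoint values of $q$ now gives the full range.

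The principal obstacle is the $L^2_tL^\infty_{r\in I_j}$ maximal estimate for $F(r-t)$: since $\phi'(s)\equiv 1$ no stationary-phase decay is available, and the payoff from the maximal inequality degrades from the $2^{j/4}$ of Case~1 to $2^{j/2}$. This degradation is exactly what shrinks the admissible region of Theorem~\ref{thm:mainSch}(a) relative to part~(b).
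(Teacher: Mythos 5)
Your argument follows exactly the template the paper intends, since the paper's own proof of this lemma is a single sentence deferring to the two preceding lemmas for $a\ne 1$: for $M_1$, the $r=2$ bound from Lemma \ref{lem:main}(a) with $m(0)=1$, the $r=\infty$ bound via the $a=1$ case of the maximal estimate Lemma \ref{lem:max}, then Riesz--Thorin; for $E_1$, the $q=2$ bound from Lemma \ref{lem:error}, the subtraction $E_1=u_1-M_1$ together with the endpoint wave Strichartz estimate at the upper exponent, then interpolation. Your self-contained re-derivation of the degenerate maximal estimate via the averaging trick combined with the Bernstein bound $\|F'\|_2\lesssim\|F\|_2$ is a welcome addition and correctly reproduces the $2^{j/2}$ loss.

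There is, however, a genuine gap at $n=3$. This lemma is invoked in Case 2 of the proof of Theorem \ref{thm:mainSch}, which explicitly covers $n\ge 3$, and for $n=3$ your upper endpoint is $q=\tfrac{2n-2}{n-3}=\infty$. The estimate you cite as ``the Keel--Tao endpoint wave Strichartz estimate'' is then precisely the forbidden $(2,\infty)$ endpoint of the three-dimensional wave equation, which is false for general data and is explicitly excluded in \cite{KT}. For \emph{radial} data it does hold — this is the Klainerman--Machedon estimate \cite{KM} — and it is that radial version, not the Keel--Tao theorem, that must be invoked when $n=3$ (citing Theorem \ref{thm:mainSch}(a) itself would of course be circular). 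Alternatively, and more economically since it avoids any critical Strichartz endpoint for every $n$, one can estimate $E_1$ directly: applying Lemma \ref{lemma:smoothing effect} with exponent $2$ in $t$ at each fixed $r\in I_j$ and using $|E_\pm(rs)|\lesssim (rs)^{-(n+1)/2}$ gives $\|E(t,r)\|_{L^2_t}\lesssim r^{-(n+1)/2}\|f\|_2$, and then Minkowski (using $q\ge 2$) yields
\begin{align*}
\|E_1(t,|x|)\|_{L^2_tL^q_x(\R\times A_j)}\lesssim 2^{j(\frac{n}{q}-\frac{n+1}{2})}\|f\|_2,
\end{align*}
which one checks is at least as strong as the claimed bound $2^{-j(\frac{n-1}{2q}-\frac{n-3}{4})}\|f\|_2$ throughout $2\le q\le\infty$ (with equality at $q=2$). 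Either patch closes the $n=3$ case; the rest of your proof is sound.
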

\begin{proof}
The proof follows exactly as the proof of two Lemmas above, thus we
omit the details.
\end{proof}

Finally, we show the sharpness. $q\geq 2$ is necessary since
\eqref{eq:Stri} is time-translation invariant. The same
counter-example for \eqref{eq:coutsch} shows that
$\frac{2}{q}+\frac{2n-1}{r}\leq n-\frac{1}{2}$ is necessary.

\begin{remark}\label{rem:conj}
We give a remark on Conjecture \ref{conj} for $a=2$. From the proof
of Theorem \ref{thm:mainSch}, we see that to prove \eqref{eq:conj},
it suffices to prove
\[\normo{r^{-1/(2n-1)}\int_\R \psi_0(s)g(s)e^{i(rs-ts^2)}ds}_{L_t^2L_{\{r\geq 1\}}^{\frac{4n-2}{2n-3}}}\les \norm{g}_2.\]
Unfortunately, we are not able to prove this.
\end{remark}

\section{Strichartz estimates in the radial case}

In this section, we will apply Theorem \ref{thm:main} to some
dispersive equations. Since we do not have the decay estimates, then
we use Christ-Kiselev lemma to derive the retarded linear estimates.
First we prove a duality property for radial function.
\begin{lemma}\label{lem:radialdual}
Assume $1\leq p\leq \infty$, $1=1/p+1/p'$, $f\in L^p(\R^n)$ and $f$
is radial. Then
\begin{align}\label{eq:lemradialdual}
\norm{f}_{L^p(\R^n)}=\sup\left\{\bigg|\int_{\R^n}
f(x)g(x)dx\bigg|:g\in L^{p'}(\R^n), g \mbox{ is radial and }
\norm{g}_{L^{p'}}\leq 1\right\}.
\end{align}
\end{lemma}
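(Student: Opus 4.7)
The plan is to prove Lemma \ref{lem:radialdual} by a standard rotation-averaging argument. One inequality is immediate: restricting the duality supremum to radial test functions can only make it smaller, so the right-hand side of \eqref{eq:lemradialdual} is bounded above by the usual $L^p$--$L^{p'}$ duality norm, which equals $\norm{f}_{L^p(\R^n)}$.

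For the reverse inequality, I would start from an arbitrary $g\in L^{p'}(\R^n)$ with $\norm{g}_{L^{p'}}\le 1$ and replace it by its spherical average
\[
\wt g(x)=\int_{SO(n)} g(Rx)\, dR,
\]
where $dR$ denotes the normalized Haar measure on the rotation group $SO(n)$. First I would verify that $\wt g$ is radial: for any $R_0\in SO(n)$, the translation invariance of Haar measure gives $\wt g(R_0 x)=\int g(RR_0 x)\,dR=\wt g(x)$. Next, Minkowski's integral inequality (applicable for all $p'\in[1,\infty]$) yields
\[
\norm{\wt g}_{L^{p'}}\le \int_{SO(n)} \norm{g(R\,\cdot\,)}_{L^{p'}}\,dR=\norm{g}_{L^{p'}}\le 1,
\]
since Lebesgue measure on $\R^n$ is rotation invariant.

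The key identity is $\int_{\R^n} f\,g\,dx=\int_{\R^n} f\,\wt g\,dx$, which I would establish by Fubini's theorem together with the radiality of $f$: for each $R\in SO(n)$, a change of variables $y=Rx$ gives $\int f(x) g(Rx)\,dx=\int f(R^{-1}y) g(y)\,dy=\int f(y)g(y)\,dy$, and averaging over $R$ produces the claim. Consequently
\[
\bigg|\int_{\R^n} f\,g\,dx\bigg|=\bigg|\int_{\R^n} f\,\wt g\,dx\bigg|\le \sup\bigg\{\bigg|\int f\,h\,dx\bigg|:h\text{ radial},\ \norm{h}_{L^{p'}}\le 1\bigg\},
\]
and taking the supremum over all $g$ with $\norm{g}_{L^{p'}}\le 1$ produces the reverse inequality.

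There is no real obstacle here, only a minor technical point: one must justify measurability of $R\mapsto g(Rx)$ and the applicability of Fubini. For continuous compactly supported $g$ this is trivial, and the general case follows by a density argument since all bounds are controlled by $\norm{g}_{L^{p'}}$. The case $p=\infty$ (so $p'=1$) and $p=1$ (so $p'=\infty$) are handled without modification, as Minkowski's inequality and rotational invariance of Lebesgue measure make no distinction.
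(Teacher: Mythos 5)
Your proof is correct and is essentially the same averaging argument as in the paper: the paper replaces $g$ by its average over spheres $\tilde g(x)=\frac{1}{|\mathbb{S}^{n-1}|}\int_{\mathbb{S}^{n-1}}g(|x|x')\,d\sigma(x')$ and bounds the $L^{p'}$ norm via H\"older/Jensen, while you average over $SO(n)$ with Haar measure and use Minkowski's integral inequality, but these produce the identical symmetrization and the two norm bounds are interchangeable. No meaningful difference in approach.
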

\begin{proof}
Denote the right-hand side of \eqref{eq:lemradialdual} by $B$. Then
it is obviously that $B\leq \norm{f}_{L^p(\R^n)}$, thus it suffices
to show $\norm{f}_{L^p(\R^n)}\leq B$. By duality, we have
\begin{align*}
\norm{f}_{L^p(\R^n)}=&\sup_{g\in
L^{p'},\norm{g}_{L^{p'}}=1}\left|\int_{\R^n}f(x)g(x)dx\right|\\
=&\sup_{g\in
L^{p'},\norm{g}_{L^{p'}}=1}\left|\int_0^\infty\int_{\mathbb{S}^{n-1}}f(r)g(rx')r^{n-1}drd\sigma(x')\right|\\
=&\sup_{g\in
L^{p'},\norm{g}_{L^{p'}}=1}\left|\int_{\R^n}f(x)\tilde{g}(x)dx\right|,
\end{align*}
where we set
$\tilde{g}(x)=\frac{1}{|\mathbb{S}^{n-1}|}\int_{\mathbb{S}^{n-1}}g(|x|x')d\sigma(x')$.
It's easy to see from H\"older's inequality that $\tilde{g}$ is
radial and $\norm{\tilde{g}}_{L^{p'}}\leq 1$, then we get
$\norm{f}_{L^p(\R^n)}\leq B$ as desired.
\end{proof}

Obviously, Lemma \ref{lem:radialdual} holds similarly for function
$f(t,x)$ spherically symmetric in $x$, e.g. $f\in L_t^pL_x^q$. As a
corollary, we can apply Lemma \ref{lem:radialdual} to get the dual
version estimates of the linear estimates in the radial case.

\begin{lemma}\label{lem:duallinear}
Assume $1\leq q,r\leq \infty$, $1/q+1/q'=1/r+1/r'=1$, $k\in \Z$. If
for all $u_0\in L^2(\R^n)$ and $u_0$ is radial we have
\[\norm{S_\phi(t)P_ku_0}_{L_t^qL_x^r}\les C(k)\norm{u_0}_{L^2},\]
Then for all $f\in L_t^{q'}L_x^{r'}$ and $f$ is spherically
symmetric in space we have
\[\normo{\int_\R S_\phi(-t)[P_kf(t,\cdot)](x)dt}_{L^2(\R^n)}\les C(k)\norm{f}_{L_t^{q'}L_x^{r'}}.\]
\end{lemma}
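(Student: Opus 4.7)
The plan is to run the standard $TT^*$/duality argument, with the twist that the duality pairing on the $L^2$ side must be restricted to radial test functions so that we may invoke the hypothesis. This restriction is exactly what Lemma \ref{lem:radialdual} legitimizes.

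First I would fix $f \in L_t^{q'}L_x^{r'}$ that is spherically symmetric in $x$ and observe that $P_k$ commutes with rotations (since its Fourier multiplier is radial) and $S_\phi(t)$ likewise commutes with rotations (since the symbol $\phi(|\xi|)$ is radial). Consequently the function $x \mapsto \int_\R S_\phi(-t)[P_k f(t,\cdot)](x)\,dt$ is radial, so by Lemma \ref{lem:radialdual} its $L^2(\R^n)$ norm equals
\begin{equation*}
\sup_{\substack{u_0\text{ radial}\\ \norm{u_0}_{L^2}\le 1}}\left|\int_{\R^n}\Big(\int_\R S_\phi(-t)[P_kf(t,\cdot)](x)\,dt\Big)\overline{u_0(x)}\,dx\right|.
\end{equation*}

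Next I would use Fubini together with the facts that $S_\phi(t)$ is a unitary group on $L^2$ with $S_\phi(-t)^\ast = S_\phi(t)$ and $P_k$ is self-adjoint to move the operators onto $u_0$:
\begin{equation*}
\int_{\R^n}\int_\R S_\phi(-t)[P_kf(t,\cdot)](x)\,dt\,\overline{u_0(x)}\,dx
= \int_\R\int_{\R^n} P_k f(t,x)\,\overline{S_\phi(t)u_0(x)}\,dx\,dt.
\end{equation*}
Then Hölder's inequality in $x$ (with exponents $r', r$) and in $t$ (with exponents $q', q$) yields the bound
\begin{equation*}
\le \norm{P_k f}_{L_t^{q'}L_x^{r'}}\,\norm{S_\phi(t)P_k u_0}_{L_t^q L_x^r}
\lesssim \norm{f}_{L_t^{q'}L_x^{r'}}\,\norm{S_\phi(t)P_k u_0}_{L_t^q L_x^r},
\end{equation*}
where I used that $P_k$ is bounded on $L^{r'}$ and self-adjoint so it can be pulled off $f$ (and is absorbed into the projector on $u_0$).

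Finally, because $u_0$ is radial, the hypothesis applies and gives $\norm{S_\phi(t)P_k u_0}_{L_t^q L_x^r} \le C(k)\norm{u_0}_{L^2} \le C(k)$. Taking the supremum over admissible $u_0$ completes the proof. The only subtle point is the opening step: without Lemma \ref{lem:radialdual} one would have to test against arbitrary $L^2$ functions $u_0$, for which the radial Strichartz hypothesis need not hold. Everything else is purely formal manipulation with the unitary group, and no estimates beyond Hölder's inequality and the hypothesis enter.
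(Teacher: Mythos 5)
Your proof is correct and is exactly the argument the paper intends (the paper states this lemma as a ``corollary'' of Lemma~\ref{lem:radialdual} without writing out the duality pairing): restrict the $L^2$ supremum to radial test functions via Lemma~\ref{lem:radialdual}, then move $S_\phi(-t)$ and $P_k$ across the inner product and apply H\"older. One small cosmetic wrinkle: the intermediate bound $\norm{P_k f}_{L_t^{q'}L_x^{r'}}\norm{S_\phi(t)P_k u_0}_{L_t^q L_x^r}$ has $P_k$ appearing on both factors, which a single application of H\"older to $\int P_k f\,\overline{S_\phi(t)u_0}$ does not produce; the cleaner route is to first rewrite $\int P_k f\,\overline{S_\phi(t)u_0}=\int f\,\overline{S_\phi(t)P_k u_0}$ (using self-adjointness and commutativity of $P_k$) and then apply H\"older to obtain $\norm{f}_{L_t^{q'}L_x^{r'}}\norm{S_\phi(t)P_k u_0}_{L_t^q L_x^r}$ directly, which is the bound you actually use.
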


Christ-Kiselev lemma which was obtained by Christ and Kiselev
\cite{Christ} is very useful in deriving the retarded estimates from
the non-retarded estimates. The one we need is the following, for
its proof we refer the readers to \cite{SmithSogge}.

\begin{lemma}[Christ-Kiselev]
Assume $1\leq p_1,q_1,p_2,q_2\leq \infty$ with $p_1>p_2$. If for all
$f\in L_t^{p_2}L_x^{q_2}$ spherically symmetric in space
\[\normo{\int_\R S_\phi(t-s)(P_kf(s))(x)ds}_{L_t^{p_1}L_x^{q_1}}\les C(k)\norm{f}_{L_t^{p_2}L_x^{q_2}},\]
then we have
\[\normo{\int_0^t S_\phi(t-s)(P_kf(s))(x)ds}_{L_t^{p_1}L_x^{q_1}}\les C(k)\norm{f}_{L_t^{p_2}L_x^{q_2}}\]
holds  with the same bound $C(k)$, for all $f\in L_t^{p_2}L_x^{q_2}$
spherically symmetric in space.

\end{lemma}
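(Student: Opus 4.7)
The plan is to run the standard Christ--Kiselev dyadic decomposition of the retarded region, adapted to the mixed-norm setting $L_t^{p_1}L_x^{q_1}$. After normalizing $\norm{f}_{L_t^{p_2}L_x^{q_2}}=1$ (legitimate since the hypothesis $p_1>p_2$ forces $p_2<\infty$), I would introduce the probability measure
\[d\mu(s)=\norm{f(s,\cdot)}_{L_x^{q_2}}^{p_2}\,ds\]
on $\R$, and for each $n\ge 1$ partition $\R$ into $2^n$ consecutive intervals of equal $\mu$-mass $2^{-n}$. Pairing adjacent intervals at each scale produces the familiar decomposition
\[\{(s,t)\in\R^2:s<t\}=\bigsqcup_{n\ge 1}\bigsqcup_{j}J_{n,j}\times K_{n,j},\]
where, for each fixed $n$, the intervals $\{J_{n,j}\}_j$ are pairwise disjoint, the intervals $\{K_{n,j}\}_j$ are pairwise disjoint, $\sup J_{n,j}\le\inf K_{n,j}$, and $\mu(J_{n,j})=\mu(K_{n,j})=2^{-n}$.

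Using this decomposition I would write
\[\int_0^t S_\phi(t-s)(P_kf(s))(x)\,ds=\sum_{n\ge 1}\sum_j 1_{K_{n,j}}(t)\int_\R S_\phi(t-s)\bigl(P_k[1_{J_{n,j}}f](s)\bigr)(x)\,ds.\]
Each cutoff $1_{J_{n,j}}(s)f(s,x)$ is still spherically symmetric in $x$ (the cutoff only touches $s$), so the non-retarded hypothesis applies and yields
\[\normo{\int_\R S_\phi(t-s)\bigl(P_k[1_{J_{n,j}}f](s)\bigr)\,ds}_{L_t^{p_1}L_x^{q_1}}\les C(k)\norm{1_{J_{n,j}}f}_{L_t^{p_2}L_x^{q_2}}=C(k)\,2^{-n/p_2}.\]
Because the $K_{n,j}$ are disjoint in $t$, the function $\sum_j 1_{K_{n,j}}(t)g_j(t,x)$ has, at each fixed $t$, at most one nonzero summand; hence for $p_1<\infty$ one gets $\bigl\|\sum_j 1_{K_{n,j}}g_j\bigr\|_{L_t^{p_1}L_x^{q_1}}^{p_1}=\sum_j\norm{1_{K_{n,j}}g_j}_{L_t^{p_1}L_x^{q_1}}^{p_1}$, so the $n$th block contributes at most $C(k)\,2^{n/p_1}\cdot 2^{-n/p_2}=C(k)\,2^{n(1/p_1-1/p_2)}$ (the case $p_1=\infty$ is cleaner, the $L^\infty$-norm collapsing to $\sup_j$ and removing the $2^{n/p_1}$ factor).

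Summing over $n$, the hypothesis $p_1>p_2$ makes the exponent $1/p_1-1/p_2$ strictly negative, so the geometric series $\sum_{n\ge 1}2^{n(1/p_1-1/p_2)}$ converges and produces the retarded estimate with a constant $\les C(k)$, the implicit factor depending only on $p_1,p_2$. The spherical symmetry of the output is automatic, since $S_\phi(t-s)P_k$ commutes with rotations and the decomposition only cuts in time. The main (essentially only) nontrivial step is the combinatorial setup of the off-diagonal decomposition; once that is in place, the $\ell^{p_1}$-gluing via disjoint $K_{n,j}$ and the geometric-series summation are essentially one line each, and the strict inequality $p_1>p_2$ is exactly what is needed for convergence.
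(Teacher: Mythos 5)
Your argument is correct and is the standard Christ--Kiselev proof: transfer to $[0,1]$ via the distribution function of $\mu$, use the dyadic Whitney decomposition of $\{(s,t):s<t\}$, invoke the non-retarded bound on each time-cutoff piece $1_{J_{n,j}}f$ (which stays radial in $x$), glue along the $t$-disjoint intervals $K_{n,j}$ by $\ell^{p_1}$-orthogonality, and sum the geometric series in $n$, convergent precisely because $p_1>p_2$. The paper does not prove this lemma itself but cites Smith--Sogge for it, and your argument is exactly the one found there (and in Christ--Kiselev's original paper), carried out correctly in the mixed-norm setting, so there is nothing substantive to compare.
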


Now we are ready to give some new Strichartz estimates for some
concrete equations. First note that from Minkowski inequality and
Littlewood-Paley square function theorem we get if $2\leq
q,r<\infty$ then
\begin{align}\label{eq:sumfre}
\norm{f}_{L_t^qL_x^r}\les \norm{\norm{P_kf}_{L_t^qL_x^r}}_{l_k^2},
\quad \norm{\norm{P_kf}_{L_t^{q'}L_x^{r'}}}_{l_k^2}\les
\norm{f}_{L_t^{q'}L_x^{r'}}.
\end{align}
We will apply \eqref{eq:sumfre} to get the Strichartz estimates on
the whole space.

{\bf 1. Schr\"odinger equation}
\begin{align}\label{eq:Sch}
\left \{
\begin{array}{l}
i\partial_{t}u+\Delta u=F,\quad (t,x)\in \R\times \R^n,\\
u(0)=u_0(x).
\end{array}
\right.
\end{align}
By Duhamel's principle, we get
$u=S(t)u_0-i\int_0^tS(t-\tau)F(\tau)d\tau$, where
$S(t)=e^{-it\Delta}$, which corresponds to $\phi(r)=r^2$. Then we
see that $\phi$ satisfies (H1), (H2), (H3) and (H4) with
$m_1=m_2=\alpha_1=\alpha_2=2$. Thus by Theorem \ref{thm:main} we
obtain for $q\geq\frac{4n+2}{2n-1}$ and if $u_0$ is radial then
\begin{align}\label{eq:Schnew}
\norm{S(t)P_k u_0}_{L^{q}_{t,x}(\R^{n+1})} \les
2^{(\frac{n}{2}-\frac{n+2}{q})k}\norm{u_0}_2.
\end{align}

\begin{definition}\label{def:nDSch}
Suppose $n\geq 2$. The exponent pair $(q,r)$ is said to be n-D
radial Schr\"odinger-admissible if $q,r\geq 2$, and
\begin{align}
\frac{4n+2}{2n-1}\leq q\leq \infty, \frac{2}{q}+\frac{2n-1}{r}\leq
n-\frac{1}{2} \quad or \quad 2\leq q<\frac{4n+2}{2n-1},
\frac{2}{q}+\frac{2n-1}{r}< n-\frac{1}{2}.
\end{align}
\end{definition}

For $n\geq 3$, the n-D radial Schr\"odinger-admissible pairs are
described in the Figure 1 ($a\ne 1$).

\begin{proposition}[Schr\"odinger Strichartz estimate]\label{prop:StriSch}
Suppose $n\geq 2$ and $u,u_0, F$ are spherically symmetric and
satisfy equation \eqref{eq:Sch}. Then
\begin{align}
\norm{u}_{L_t^qL_x^r}+\norm{u}_{C(\R:\dot{H}^\gamma)}\les
\norm{u_0}_{\dot{H}^\gamma}+\norm{F}_{L_t^{\tilde q'}L_x^{\tilde
r'}},
\end{align}
if $\gamma \in \R$, $(q,r)$ and $(\tilde q,\tilde r)$ are both n-D
radial Schr\"odinger-admissible, either $(\tilde q,\tilde r,n)\ne
(2,\infty,2)$ or $(q,r,n)\ne (2,\infty,2)$, and satisfy the ``gap"
condition
\[\frac{2}{q}+\frac{n}{r}=\frac{n}{2}-\gamma,\ \frac{2}{\tilde q}+\frac{n}{\tilde r}=\frac{n}{2}+\gamma.\]
\end{proposition}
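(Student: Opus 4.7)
The plan is to first establish the homogeneous Strichartz estimate by combining the frequency-localized bound \eqref{eq:Schnew} with the Littlewood-Paley square function theorem \eqref{eq:sumfre}, then deduce the non-retarded inhomogeneous estimate by duality and composition, and finally invoke the Christ-Kiselev lemma to produce the retarded Duhamel estimate. The $C(\R:\dot{H}^\gamma)$ bound will come essentially for free from unitarity of $S(t)$ on $\dot H^\gamma$.

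First I would promote \eqref{eq:Schnew} (which is diagonal in $(q,r)$) to the full mixed-norm range. Since $P_k u_0$ is frequency-localized at $\{|\xi|\sim 2^k\}$, Bernstein's inequality in $x$ converts that bound into
\begin{align*}
\norm{S(t)P_k u_0}_{L^q_tL^r_x}\les 2^{(n/2-2/q-n/r)k}\norm{P_k u_0}_{L^2}
\end{align*}
along the radial Strichartz line $\frac{2}{q}+\frac{2n-1}{r}=n-\frac{1}{2}$ with $q\geq (4n+2)/(2n-1)$; real interpolation with the classical $(\infty,2)$ endpoint then sweeps out every $n$-D radial Schr\"odinger-admissible pair. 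Using the gap condition $\gamma=n/2-2/q-n/r$ and the equivalence $\norm{u_0}_{\dot H^\gamma}^2\sim \sum_k 2^{2\gamma k}\norm{P_k u_0}_2^2$, summing over $k$ via \eqref{eq:sumfre} yields the homogeneous bound $\norm{S(t)u_0}_{L^q_tL^r_x}\les \norm{u_0}_{\dot H^\gamma}$.

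Next I would dualize. Lemma \ref{lem:duallinear} applied to $(\tilde q,\tilde r)$ together with the companion gap condition $2/\tilde q+n/\tilde r=n/2+\gamma$ gives
\begin{align*}
\normo{\int_\R S(-s)F(s)\,ds}_{\dot H^\gamma}\les \norm{F}_{L^{\tilde q'}_tL^{\tilde r'}_x}.
\end{align*}
Composing this with the homogeneous estimate (and exploiting unitarity of $S(t)$ on $\dot H^\gamma$) produces the non-retarded inhomogeneous bound $\normo{\int_\R S(t-s)F(s)\,ds}_{L^q_tL^r_x}\les \norm{F}_{L^{\tilde q'}_tL^{\tilde r'}_x}$. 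The retarded version with $\int_0^t$ then follows from the Christ-Kiselev lemma whenever $q>\tilde q'$, i.e., $1/q+1/\tilde q<1$.

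The expected main obstacle is the diagonal endpoint $q=\tilde q'=2$ where Christ-Kiselev just fails. The hypothesis that not both $(q,r,n)$ and $(\tilde q,\tilde r,n)$ equal $(2,\infty,2)$ is designed to exclude only the $n=2$ double-endpoint obstruction; the remaining $q=\tilde q=2$ case occurs solely at the classical Keel-Tao endpoint $(2,2n/(n-2))$ for $n\geq 3$ (which lies simultaneously in the classical and radial admissible regions) and is handled by the standard Keel-Tao bilinear endpoint argument, since the dispersive decay it uses is rotation-invariant. Finally, the $C(\R:\dot H^\gamma)$ part is immediate: $\norm{S(t)u_0}_{\dot H^\gamma}=\norm{u_0}_{\dot H^\gamma}$ handles the free evolution, and applying the dual estimate to the truncated source $1_{[0,t]}F$ together with unitarity handles the Duhamel piece, continuity in $t$ following from a standard density argument in Schwartz functions.
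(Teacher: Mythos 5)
Your overall architecture (homogeneous estimate, dualize via Lemma~\ref{lem:duallinear}, compose, then Christ--Kiselev) matches the paper's, but your derivation of the non-retarded mixed-norm estimate is incomplete in a way that loses exactly the new radial content. Starting from the diagonal bound \eqref{eq:Schnew} at a frequency $k$, Bernstein in $x$ takes you from $L^{q_0}_x$ to $L^r_x$ only for $r\geq q_0$; together with interpolation against the trivial $(q,r)=(\infty,2)$ point this yields precisely the admissible pairs with $q\geq \tfrac{4n+2}{2n-1}$ (the triangle bounded by $r=2$, the diagonal, and the radial line from $D'$ down to $(\infty,2)$). It does \emph{not} reach any pair with $2\leq q<\tfrac{4n+2}{2n-1}$, and in particular not the pairs $q=2$, $\tfrac{4n-2}{2n-3}<r<\tfrac{2n}{n-2}$, which are the genuinely new Strichartz pairs that radial symmetry buys you (the segment between $B'$ and $C'$). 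Theorem~\ref{thm:mainSch}, which the paper simply cites here, obtains these via the dyadic annular decomposition of physical space and the maximal-function bound of Lemma~\ref{lem:max}; that is a separate argument, not a Bernstein/interpolation consequence of \eqref{eq:Schnew}. As written, your homogeneous step proves strictly less than the proposition requires.

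A secondary inaccuracy concerns the Christ--Kiselev endpoint. You claim that $q=\tilde q=2$ can only occur at the classical Keel--Tao pair $(2,\tfrac{2n}{n-2})$. For $n\geq 3$ this is false: with $q=\tilde q=2$ the two gap conditions give $\tfrac{1}{r}=\tfrac{n-2-2\gamma}{2n}$ and $\tfrac{1}{\tilde r}=\tfrac{n-2+2\gamma}{2n}$, and both pairs are $n$-D radial Schr\"odinger-admissible (in the open, $q<\tfrac{4n+2}{2n-1}$ regime) for every $\gamma$ with $|\gamma|<\tfrac{n-1}{2n-1}$. This is a one-parameter family containing non-classical $r,\tilde r$ with $\gamma\neq 0$, so the Keel--Tao bilinear argument, which only covers classically admissible pairs, does not dispose of it. (The paper's own disposal of this case, via the assertion that ``either $q,r>2$ or $\tilde q,\tilde r>2$'' when $\gamma\neq 0$, is also too quick on exactly this point.) The remainder of your proposal --- the dualization, the $TT^*$-composition, the use of Christ--Kiselev in the non-endpoint range, and the $C(\R:\dot H^\gamma)$ bound from unitarity and a density argument --- is correct and coincides with the paper's proof.
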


\begin{proof}
The case $F=0$ follows from Theorem \ref{thm:mainSch}. Now we assume
$F\ne 0$, $(q,r)$ and $(\tilde q,\tilde r)$ are both n-D radial
Schr\"odinger admissible, $(\tilde q,\tilde r,n)\ne (2,\infty,2)$
and satisfy the ``gap" condition. If $\gamma=0$, this is implied by
the already known estimates \cite{KT}. If $\gamma\ne 0$, then by
scaling it suffices to prove
\begin{align}\label{eq:strschpf2}
\normo{\int_0^tS(t-s)P_0F(s)ds}_{L_t^qL_x^r}\les
\norm{F}_{L_t^{\tilde q'}L_x^{\tilde r'}}.
\end{align}
Since either $q,r>2$ or $\tilde q,\tilde r>2$, then in view of
Christ-Kiselev lemma it suffices to prove
\begin{align}\label{eq:strschpf3}
\normo{\int_\R S(t-s)P_0F(s)ds}_{L_t^qL_x^r}\les
\norm{F}_{L_t^{\tilde q'}L_x^{\tilde r'}},
\end{align}
which follows immediately from the non-retarded linear estimates and
Lemma \ref{lem:duallinear}. Thus we complete the proof of the
proposition.
\end{proof}

\begin{remark}
We remark that we can take $\gamma<0$, which means there are
smoothing effects in the non-retarded Strichartz estimates. This
only holds in the radial case. There are also smoothing effects in
some retarded estimates, but for our purpose, we only derive the
ones without smoothing effect.
\end{remark}

{\bf 2. Wave equation}
\begin{align}\label{eq:wave}
\left \{
\begin{array}{l}
\partial_{tt}u-\Delta u=F,\quad (t,x)\in \R\times \R^n,\\
u(0)=u_0(x),\ u_t(0)=u_1(x).
\end{array}
\right.
\end{align}
By Duhamel's principle, we get
$u=W'(t)u_0+W(t)u_1-\int_0^tW(t-\tau)F(\tau)d\tau$, where
\[W(t)=\frac{\sin(t\sqrt{-\Delta})}{\sqrt{-\Delta}},\quad
W'(t)=\cos(t\sqrt{-\Delta}).\]
This reduces to $W_{\pm}(t):=e^{\pm
it(-\Delta)^{1/2}}$, which corresponds to $\phi(r)=r$. Then we see
that $\phi$ satisfies (H1) and (H2) with $m_1=m_2=1$. Thus by
Theorem \ref{thm:main} we obtain for $q>\frac{2n}{n-1}$ and if $u_0$
is radial then
\begin{align}\label{eq:Schnew}
\norm{W_\pm(t)P_k u_0}_{L^{q}_{t,x}(\R^{n+1})} \les
2^{(\frac{n}{2}-\frac{n+1}{q})k}\norm{u_0}_2.
\end{align}

\begin{definition}
Suppose $n\geq 2$. The exponent pair $(q,r)$ is said to be n-D
radial wave-admissible if $q,r\geq 2$, and one of the following

(1) $n=2$, $(q,r)\in
A_2=\{(q,r):\frac{1}{q}+\frac{1}{r}<\frac{1}{2},q>4\}\cup
\{(4,\infty),(\infty,2)\}$;

(2) $n\geq 3$, $(q,r)\in A_{\geq 3}=\{(q,r):q\geq 2,
\frac{1}{q}+\frac{n-1}{r}<\frac{n-1}{2}\}\cup \{(\infty,2)\}$.
\end{definition}

For $n\geq 4$, the n-D radial wave-admissible pairs are described in
the Figure 1 ($a=1$).

\begin{proposition}[Wave Strichartz estimate]\label{prop:Striwave}
Suppose $n\geq 2$ and $u,u_0,u_1, F$ are spherically symmetric and
satisfy equation \eqref{eq:wave}. Then
\begin{align}
\norm{u}_{L_t^qL_x^r}+\norm{u}_{C([0,T]:\dot{H}^\gamma)}+\norm{\partial_tu}_{C([0,T]:\dot{H}^{\gamma-1})}\les
\norm{u_0}_{\dot{H}^\gamma}+\norm{u_1}_{\dot{H}^{\gamma-1}}+\norm{F}_{L_t^{\tilde
q'}L_x^{\tilde r'}},
\end{align}
if $\gamma \in \R$, $(q,r)$ and $(\tilde q,\tilde r)$ are both n-D
radial wave-admissible, $(\tilde q,\tilde r,n)\ne (2,\infty,3)$, and
satisfy the ``gap" condition
\[\frac{1}{q}+\frac{n}{r}=\frac{n}{2}-\gamma,\ \frac{1}{\tilde q}+\frac{n}{\tilde r}=\frac{n}{2}-1+\gamma.\]
\end{proposition}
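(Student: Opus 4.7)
The plan is to mirror the proof of Proposition \ref{prop:StriSch} (the Schr\"odinger analog), replacing the role of $e^{it\Delta}$ by the half-wave propagators $W_\pm(t)=e^{\pm it\sqrt{-\Delta}}$ and invoking Theorem \ref{thm:mainSch} with $a=1$ instead of $a\ne 1$. Since $\cos(t\sqrt{-\Delta})=\tfrac{1}{2}(W_++W_-)$ and $\sin(t\sqrt{-\Delta})/\sqrt{-\Delta}=\tfrac{1}{2i}D^{-1}(W_+-W_-)$, the factor $D^{-1}$ transfers $u_1\in\dot H^{\gamma-1}$ to data at level $\dot H^{\gamma}$, so everything is measured at the same regularity.

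For the homogeneous part ($F=0$), Theorem \ref{thm:mainSch}(a) (for $n\ge 3$) and the $n=2$ radial wave estimates covered in the literature give, for each $n$-D radial wave-admissible pair $(q,r)$,
\[
\norm{W_\pm(t)P_k u_0}_{L^q_tL^r_x}\lesssim 2^{k(n/2-1/q-n/r)}\norm{P_k u_0}_{L^2}=2^{k\gamma}\norm{P_k u_0}_{L^2},
\]
using the gap condition $\tfrac{1}{q}+\tfrac{n}{r}=\tfrac{n}{2}-\gamma$. Squaring in $k$ and summing via the Littlewood--Paley square-function inequality \eqref{eq:sumfre} (valid since $q,r<\infty$; the boundary cases $q=\infty$ or $r=\infty$ are handled directly, the $(\infty,2)$ corner being just the standard energy estimate) yields the homogeneous bound. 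The $C_t\dot H^\gamma$ and $C_t\dot H^{\gamma-1}$ continuity is the standard energy identity.

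For the inhomogeneous part, if $\gamma=0$ the estimate is already known (classical Keel--Tao, since every classical wave-admissible pair is radial wave-admissible). For $\gamma\ne 0$, a Littlewood--Paley decomposition plus scaling reduces matters to the frequency-localized inequality
\[
\normo{\int_0^t W(t-s)P_0 F(s)\,ds}_{L^q_tL^r_x}\lesssim \norm{F}_{L^{\tilde q'}_tL^{\tilde r'}_x}.
\]
The excluded case $(\tilde q,\tilde r,n)=(2,\infty,3)$ guarantees that either $q>\tilde q$ or $\tilde q>q$ (in the radial wave-admissible range the only double endpoint is $(2,\infty)$ in $n=3$), so the Christ--Kiselev lemma (Lemma 3.3) applies and removes the cutoff $s<t$, leaving the non-retarded bound
\[
\normo{\int_\R W(t-s)P_0 F(s)\,ds}_{L^q_tL^r_x}\lesssim \norm{F}_{L^{\tilde q'}_tL^{\tilde r'}_x}.
\]
Factoring $W(t-s)=W(t)W(-s)$ (via the product formula for $W_\pm$) and combining the non-retarded radial estimate for $W(t)$ with its radial dual (Lemma \ref{lem:duallinear}, which is legal because $F$ is spherically symmetric in $x$ so the dual function produced there is radial) closes the argument.

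The main obstacle is managing the endpoints. The $(\infty,2)$ corner is trivial by energy conservation; for $n=2$ the $(4,\infty)$ endpoint is the known radial Sterbenz estimate, which must be inserted by hand since Theorem \ref{thm:mainSch}(a) is stated for $n\ge 3$; and the Christ--Kiselev step cannot be applied when $q=\tilde q=2$, which is precisely why the case $(\tilde q,\tilde r,n)=(2,\infty,3)$ is excluded. The only subtlety beyond the Schr\"odinger argument is keeping careful track of the $D^{-1}$ on the $u_1$ contribution, so that the frequency sum lands in $\dot H^{\gamma-1}$ rather than $\dot H^{\gamma}$; this is pure bookkeeping once the homogeneous bound for $W_\pm P_k$ is in hand.
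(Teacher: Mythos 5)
Your proposal is correct and follows the same template the paper indicates for Proposition \ref{prop:Striwave}, namely mirroring the proof of Proposition \ref{prop:StriSch}: decompose $W,W'$ into the half-wave groups $W_\pm$, use the frequency-localized radial estimate (Theorem \ref{thm:mainSch}(a)) plus the square-function summation \eqref{eq:sumfre} for the homogeneous part, and obtain the retarded inhomogeneous estimate by scaling to $P_0$, duality via Lemma \ref{lem:duallinear}, and the Christ--Kiselev lemma. You also correctly flag a point the paper leaves implicit: Theorem \ref{thm:mainSch}(a) is stated only for $n\ge 3$, so for $n=2$ the admissible pairs (including the $(4,\infty)$ corner in Definition 3.5) must be supplied from the known 2-D radial wave Strichartz estimates rather than from Theorem \ref{thm:mainSch} directly. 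One small caveat in your wording: the Christ--Kiselev hypothesis is $q>\tilde q'$ (not $q\ne \tilde q$); since $q,\tilde q\ge 2$ this is equivalent to excluding the simultaneous case $q=\tilde q=2$, which is also precisely what the analogous clause in the paper's proof of Proposition \ref{prop:StriSch} ("either $q,r>2$ or $\tilde q,\tilde r>2$") is invoking, so you are in line with the paper's own level of rigor on this point.
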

\begin{proof}
The proof is similar to that of Proposition \ref{prop:StriSch}. We
omit the details.
\end{proof}

{\bf 3. Klein-Gordon equation}
\begin{align}\label{eq:klein}
\left \{
\begin{array}{l}
\partial_{tt}u-\Delta u+u=F,\\
u(0)=u_0(x),\ u_t(0)=u_1(x).
\end{array}
\right.
\end{align}
By Duhamel's principle, we get
$u=K'(t)u_0+K(t)u_1-\int_0^tK(t-\tau)F(\tau)d\tau$, where
\begin{align*}
K(t)=\omega^{-1} \sin(t\omega),\quad K'(t)=\cos(t\omega), \quad
\omega=\sqrt{I-\Delta}.
\end{align*}
This reduces to the semigroup $K_{\pm}(t):=e^{\pm
it(I-\Delta)^{1/2}}$, which corresponds to $\phi(r)=(1+r^2)^{1/2}$.
By simple calculation,
\[
\phi'(r)=\frac{r}{(1+r^2)^{\half 1}},\quad
\phi''(r)=\frac{1}{(1+r^2)^{\half 3}},
\]
we see that $\phi$ satisfies (H1), (H2), (H3) and (H4) with $m_1=1$,
$\alpha_1=-1$, $m_2=\alpha_2=2$. Thus by Theorem \ref{thm:main} we
obtain for $q\geq \frac{4n+2}{2n-1}$ and if $u_0$ is radial then
\begin{align}
\norm{K_\pm(t)P_k u_0}_{L^{q}_{t,x}(\R^{n+1})} \les
C(q,k)\norm{u_0}_2,
\end{align}
where
\begin{align*}
C(q,k)=
\begin{cases}
2^{(\frac{n}{2}-\frac{n+2}{q})k}, & k\leq 0;\\
2^{(\frac{n}{2}-\frac{n+1}{q})k}, & k\geq 0,
\frac{2n}{n-1}<q\leq \infty;\\
2^{(\frac{n}{2}-\frac{n+1}{q})k+(\frac{1}{2}-\frac{1}{q})k}, & k\geq
0, \frac{4n+2}{2n-1}\leq q\leq \frac{2n}{n-1}.
\end{cases}
\end{align*}

{\bf 4. Beam equation}
\begin{align}\label{eq:Beam}
\left \{
\begin{array}{l}
\partial_{tt}u+\Delta^2 u+u=F,\\
u(0)=u_0(x),\ u_t(0)=u_1(x).
\end{array}
\right.
\end{align}
By Duhamel's principle, we have
$u=B'(t)u_0+B(t)u_1-\int_0^tB(t-\tau)F(\tau)d\tau$, where
\begin{eqnarray*}
B(t)= \omega^{-1}\sin(t\omega), \quad B'(t)=\cos(t \omega), \quad
\omega=\sqrt{I+\Delta^2}.
\end{eqnarray*}
This reduces to the semigroup $B_{\pm}(t):=e^{\pm
it(I+\Delta^2)^{1/2}}$, which corresponding to
$\phi(r)=(1+r^4)^{1/2}$. By simple calculation,
\[
\phi'(r)=2r^3/(1+r^4)^{\half 1}, \quad
\phi''(r)=(6r^2+2r^6)/(1+r^4)^{\half 3},
\]
we know that $\phi$ satisfies (H1) and (H2) with $m_1=\alpha_1=2$,
$m_2=\alpha_2=4$. Thus by Theorem \ref{thm:main} we obtain for
$q\geq\frac{4n+2}{2n-1}$ and if $u_0$ is radial then
\begin{align}
\norm{B_\pm(t)P_k u_0}_{L^{q}_{t,x}(\R^{n+1})} \les
B(q,k)\norm{u_0}_2,
\end{align}
where
\begin{align*}
B(q,k)=\left\{
\begin{array}{l}
2^{(\frac{n}{2}-\frac{n+4}{q})k}, \quad k\leq 0;\\
2^{(\frac{n}{2}-\frac{n+2}{q})k}, \quad k\geq 0.
\end{array}
\right.
\end{align*}

{\bf 5. Fractional-order Schr\"odinger equation}
\begin{align}\label{eq:FSch}
\left \{
\begin{array}{l}
i\partial_{t}u+(-\Delta)^{\frac{\sigma}{2}} u=F,\\
u(0)=u_0(x),
\end{array}
\right.
\end{align}
where $1<\sigma<2$. By Duhamel's principle, we have
$u=S_\sigma(t)u_0+\int_0^tS_\sigma(t-\tau)F(\tau)d\tau$, where
$S_\sigma(t)= e^{-it\phi(\sqrt{-\Delta})}$ with $\phi(r)=r^\sigma$.
By simple calculation, we see that $\phi$ satisfies (H1), (H2), (H3)
and (H4) with $m_1=\alpha_1=m_2=\alpha_2=\sigma$. Thus by Theorem
\ref{thm:main} we obtain for $q\geq\frac{4n+2}{2n-1}$ and if $u_0$
is radial then
\begin{align}
\norm{S_\sigma(t)P_k u_0}_{L^{q}_{t,x}(\R^{n+1})} \les
2^{(\frac{n}{2}-\frac{n+\sigma}{q})k}\norm{u_0}_2.
\end{align}

\begin{proposition}\label{prop:StriFSch}
Suppose $n\geq 2$ and $u,u_0, F$ are spherically symmetric in space
and satisfy equation \eqref{eq:FSch}. Then
\begin{align}
\norm{u}_{L_t^qL_x^r}+\norm{u}_{C(\R:\dot{H}^\gamma)}\les
\norm{u_0}_{\dot{H}^\gamma}+\norm{F}_{L_t^{\tilde q'}L_x^{\tilde
r'}},
\end{align}
if $\gamma \in \R$, $(q,r)$ and $(\tilde q,\tilde r)$ are both n-D
radial Schr\"odinger-admissible (see Definition \ref{def:nDSch}),
$(\tilde q,\tilde r,n)\ne (2,\infty,2)$, and satisfy the ``gap"
condition
\[\frac{\sigma}{q}+\frac{n}{r}=\frac{n}{2}-\gamma,\ \frac{\sigma}{\tilde q}+\frac{n}{\tilde r}=\frac{n}{2}+\gamma.\]
\end{proposition}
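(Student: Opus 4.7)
The plan is to mirror the proof of Proposition \ref{prop:StriSch} almost verbatim, exploiting the fact that the dispersion $\phi(r)=r^\sigma$ is scale-invariant and satisfies (H1)--(H4) with $m_1=\alpha_1=m_2=\alpha_2=\sigma$. Thus every ingredient used for $\sigma=2$ is available with the same proof, and the frequency-localized bound
\[
\norm{S_\sigma(t)P_k u_0}_{L^{q}_{t,x}(\R^{n+1})}\lesssim
2^{(\frac{n}{2}-\frac{n+\sigma}{q})k}\norm{u_0}_2
\]
together with the mixed-norm refinement from Theorem \ref{thm:mainSch}(b) plays the role of \eqref{eq:Schnew}.

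For the homogeneous part I would start from the frequency-localized mixed-norm estimate obtained by combining Theorem \ref{thm:mainSch}(b) with Bernstein and the scaling $(t,x)\mapsto(\lambda^\sigma t,\lambda x)$, namely
\[
\norm{S_\sigma(t)P_k u_0}_{L^q_tL^r_x}\lesssim
2^{k(\frac{n}{2}-\frac{\sigma}{q}-\frac{n}{r})}\norm{P_k u_0}_{L^2}
\]
for every $n$-D radial Schr\"odinger-admissible pair $(q,r)$. The gap condition $\sigma/q+n/r=n/2-\gamma$ turns the right-hand side into $\norm{P_k u_0}_{\dot H^\gamma}$, and square-summing in $k$ via the Littlewood--Paley inequality \eqref{eq:sumfre} (recall $q,r\ge 2$) produces the homogeneous bound $\norm{S_\sigma(t)u_0}_{L^q_tL^r_x}\lesssim \norm{u_0}_{\dot H^\gamma}$, with $u\in C(\R;\dot H^\gamma)$ being standard once one has the norm estimate.

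For the inhomogeneous part I would first subtract the homogeneous piece and, by rescaling, reduce to proving the frequency-localized retarded bound
\[
\normo{\int_0^t S_\sigma(t-s)P_0 F(s)\,ds}_{L^q_tL^r_x}\lesssim
\norm{F}_{L^{\tilde q'}_tL^{\tilde r'}_x}.
\]
Using the hypothesis $(\tilde q,\tilde r,n)\ne(2,\infty,2)$ to ensure $q>\tilde q'$, the Christ--Kiselev lemma reduces this to the non-retarded version
\[
\normo{\int_\R S_\sigma(t-s)P_0 F(s)\,ds}_{L^q_tL^r_x}\lesssim
\norm{F}_{L^{\tilde q'}_tL^{\tilde r'}_x},
\]
which factors as $S_\sigma(t)\circ\bigl[\int_\R S_\sigma(-s)P_0F(s)\,ds\bigr]$: the outer operator is controlled by the homogeneous estimate at $(q,r)$, and the inner integral by the radial dual estimate from Lemma \ref{lem:duallinear} applied to the homogeneous estimate at $(\tilde q,\tilde r)$.

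The only real subtlety is guaranteeing the strict inequality $q>\tilde q'$ required by Christ--Kiselev when one of the pairs sits at the double endpoint $(2,\infty)$. As in Proposition \ref{prop:StriSch}, this is ensured by the assumption $(\tilde q,\tilde r,n)\ne(2,\infty,2)$ combined with the gap condition, which under $1<\sigma<2$ prevents both $(q,r)$ and $(\tilde q,\tilde r)$ from simultaneously being the endpoint pair. Everything else is a bookkeeping exercise, identical in structure to the corresponding argument for the ordinary Schr\"odinger equation.
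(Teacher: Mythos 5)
Your strategy — frequency-localized homogeneous estimate via Theorem \ref{thm:mainSch}(b), square-summing with \eqref{eq:sumfre}, then Christ--Kiselev plus the dual estimate Lemma \ref{lem:duallinear} for the retarded piece — is the same approach the paper takes, since the paper simply says ``the proof is similar to that of Proposition \ref{prop:StriSch}.'' However, you miss the one case the paper explicitly singles out, and there is a second point where your justification is thinner than it looks.

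The paper's proof of Proposition \ref{prop:StriFSch} reads: ``The proof is similar to that of Proposition \ref{prop:StriSch}, except $(q,r,n)=(2,\infty,2)$. This particular case follows similarly as for the Schr\"odinger equation in \cite{Tao2}.'' That case is genuinely not covered by your argument. When $n=2$ and $(q,r)=(2,\infty)$ (which is radial admissible and is permitted by the hypotheses, since only $(\tilde q,\tilde r,n)\ne(2,\infty,2)$ is excluded, not $(q,r,n)$), the Littlewood--Paley square-function bound \eqref{eq:sumfre} that you invoke to sum over $k$ requires $r<\infty$; it is simply false at $r=\infty$, so the passage from $\norm{S_\sigma(t)P_k u_0}_{L^2_tL^\infty_x}$ to $\norm{S_\sigma(t)u_0}_{L^2_tL^\infty_x}$ does not follow. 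The paper handles this endpoint by adapting Tao's argument for the two-dimensional spherically averaged endpoint Strichartz estimate, which you would need to do as well.

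A smaller point: your justification for the Christ--Kiselev hypothesis $q>\tilde q'$ is that the gap condition together with $\sigma\ne n$ prevents $(q,r)$ and $(\tilde q,\tilde r)$ from \emph{both} being $(2,\infty)$. That observation is correct but not sufficient. Since $q,\tilde q\geq 2$ always force $\tilde q'\leq 2\leq q$, the inequality $q>\tilde q'$ fails precisely when $q=\tilde q=2$, and the admissible region contains a whole segment with $q=2$ and $r<\infty$ (the segment $B'C'$ in the paper's Figure 1). So in principle one could have $q=\tilde q=2$ with both $r,\tilde r$ finite, a situation not excluded by ``not both equal to $(2,\infty)$.'' (The paper's own proof of Proposition \ref{prop:StriSch} asserts ``either $q,r>2$ or $\tilde q,\tilde r>2$'' with the same lack of justification, so this is a shared gap rather than one you introduced; still, you should either rule out $q=\tilde q=2$ under the gap condition or supply a separate argument for it.)
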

\begin{proof}
The proof is similar to that of Proposition \ref{prop:StriSch},
except $(q,r,n)=(2,\infty,2)$. This particular case follows
similarly as for the schr\"odinger equation in \cite{Tao2}. We omit
the details.
\end{proof}

In particular, taking $\gamma=0$ we get a family of Strichartz
estimates without loss of regularity.
\begin{corollary}
Suppose $n\geq 2$, $\frac{2n}{2n-1}<\sigma\leq 2$ and $u,u_0, F$ are
spherically symmetric in space and satisfy equation \eqref{eq:FSch}.
Then
\begin{align}
\norm{u}_{L_t^qL_x^r}+\norm{u}_{C(\R:L^2)}\les
\norm{u_0}_{L^2}+\norm{F}_{L_t^{\tilde q'}L_x^{\tilde r'}},
\end{align}
if $(q,r)$ and $(\tilde q,\tilde r)$ belong to $\{(q,r):q,r\geq
2,\frac{\sigma}{q}+\frac{n}{r}=\frac{n}{2}\}$ and $(\tilde q,\tilde
r,n)\ne (2,\infty,2)$.
\end{corollary}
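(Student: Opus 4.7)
The plan is to derive this Corollary as a direct specialization of Proposition \ref{prop:StriFSch} with $\gamma=0$. Under this choice the two gap identities $\frac{\sigma}{q}+\frac{n}{r}=\frac{n}{2}-\gamma$ and $\frac{\sigma}{\tilde q}+\frac{n}{\tilde r}=\frac{n}{2}+\gamma$ both collapse to the scaling line appearing in the statement, so the only thing to verify is that every $(q,r)$ with $q,r\geq 2$ and $\frac{\sigma}{q}+\frac{n}{r}=\frac{n}{2}$ is n-D radial Schr\"odinger-admissible in the sense of Definition \ref{def:nDSch}, after which the Corollary follows by directly plugging these pairs into Proposition \ref{prop:StriFSch}.

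First I would substitute $\frac{n}{r}=\frac{n}{2}-\frac{\sigma}{q}$ into the admissibility inequality. A short computation yields
\[
\frac{2}{q}+\frac{2n-1}{r} \;=\; \frac{2n-1}{2}+\frac{2n-(2n-1)\sigma}{nq}.
\]
The hypothesis $\sigma>\frac{2n}{2n-1}$ is exactly the condition that makes the numerator $2n-(2n-1)\sigma$ strictly negative, so $\frac{2}{q}+\frac{2n-1}{r}<n-\frac{1}{2}$ for every $q\geq 2$ lying on the scaling line. This strict inequality simultaneously covers both branches of Definition \ref{def:nDSch}: for $q\geq \frac{4n+2}{2n-1}$ the non-strict admissibility inequality holds a fortiori, while for $2\leq q<\frac{4n+2}{2n-1}$ the strict inequality is exactly what is required.

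Next I would handle the excluded endpoint. The case $(q,r,n)=(2,\infty,2)$ arises on the scaling line $\frac{\sigma}{q}+\frac{n}{r}=\frac{n}{2}$ only when $\sigma=2$ and $n=2$, which is precisely the classical 2D Schr\"odinger endpoint already excluded from Proposition \ref{prop:StriFSch}. Hence for all other admissible pairs on either the homogeneous or the dual side of the estimate one may freely apply Proposition \ref{prop:StriFSch} with $\gamma=0$.

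The main (indeed, only) obstacle is this bookkeeping: one must recognize that the threshold $\sigma=\frac{2n}{2n-1}$ in the hypothesis corresponds exactly to the boundary of radial Schr\"odinger-admissibility intersected with the scaling line for $\phi(r)=r^\sigma$, and that the strict inequality $\sigma>\frac{2n}{2n-1}$ is precisely what pushes all pairs $(q,r)$ with $q\geq 2$ into the admissible region. Once this is in place, combining the verified admissibility with Proposition \ref{prop:StriFSch} immediately yields
\[
\|u\|_{L_t^qL_x^r}+\|u\|_{C(\R:L^2)}\lesssim \|u_0\|_{L^2}+\|F\|_{L_t^{\tilde q'}L_x^{\tilde r'}},
\]
completing the proof with no further technical steps required.
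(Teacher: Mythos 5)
Your proposal is correct and matches the paper's approach exactly: the paper simply observes ``taking $\gamma=0$'' in Proposition~\ref{prop:StriFSch}, and your computation supplies the details that the paper omits, namely that the scaling line intersected with $\sigma>\frac{2n}{2n-1}$ lands inside the n-D radial Schr\"odinger-admissible region. One small imprecision: at $q=\infty$ (hence $r=2$) your formula gives $\frac{2}{q}+\frac{2n-1}{r}=n-\frac12$ with \emph{equality}, not strict inequality, but this pair still belongs to the first branch of Definition~\ref{def:nDSch} (which only requires $\leq$), so the conclusion is unaffected.
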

These estimates without loss of derivatives hold only in the radial
case. Finally we present the Knapp-counterexample to show that the
general non-radial Strichartz estimates have loss of derivative for
$1<\sigma<2$.

Assume that the following inequality hold for general non-radial
function $f$:
\begin{align}\label{eq:Fschg}
\normo{\int_{\R^d}e^{it|\xi|^\sigma}e^{ix\xi}\eta_0(\xi)\hat{f}(\xi)d\xi}_{L^q_tL^r_x}\les
\norm{f}_{L^2}.
\end{align}
Take
\[D=\{\xi=(\xi_1,\xi')\in \R^d:|\xi_1-1|\les \delta,|\xi'|\leq \delta\}.\]
Let $\hat{f}=1_D(\xi)$. Then $\norm{f}_2\sim \delta^{d/2}$, and
\[\int_{\R^d}e^{it|\xi|^\sigma}e^{ix\xi}\eta_0(\xi)\hat{f}(\xi)d\xi=e^{i(t+x_1)}\int_{D}
e^{it(|\xi|^\sigma-\xi_1^\sigma)}e^{it(\xi_1^\sigma-1-\sigma(\xi_1-1))}e^{i(t\sigma+x_1)(\xi_1-1)}e^{ix'\xi'}d\xi.\]
Since in $D$ we have
\[||\xi|^\sigma-\xi_1^\sigma|\les |\xi'|^2\les \delta^2, \, |\xi_1^\sigma-1-\sigma(\xi_1-1)|\les |\xi_1-1|^2\les \delta^2,\]
thus for $|t|\les \delta^{-2},\, |t\sigma+x_1|\les \delta^{-1},\,
|x'|\les \delta^{-1}$, we have
$|\int_{\R^d}e^{it|\xi|^\sigma}e^{ix\xi}\eta_0(\xi)\hat{f}(\xi)d\xi|\sim
|D|$. Therefore, \eqref{eq:Fschg} implies
\[\delta^{-\frac{2}{q}-\frac{d}{r}+\frac{d}{2}}\les 1,\]
which implies immediately that $\frac{2}{q}+\frac{d}{r}\leq
\frac{d}{2}$ by taking $\delta\ll 1$.

\section{Applications to nonlinear equations}\label{sec:nonlinear}

In this section, we apply the improved Strichartz estimates to the
nonlinear equations, e.g. nonlinear Schr\"odinger equation,
nonlinear wave equation. These equations have been studied
extensively.

\subsection{Nonlinear Schr\"odinger equations}

First we consider the semi-linear Schr\"odinger equations:
\begin{align}\label{eq:NLS}
i\partial_tu+\Delta u= \mu |u|^{p}u,\ \ u(0)=u_0(x),
\end{align}
where $u(t,x):\R\times\R^n\rightarrow\C,\ n\geq 2$, $u_0\in
\dot{H}^s$, $p>0$, $\mu=\pm 1$. It is easy to see that equation
\eqref{eq:NLS} is invariant under the following scaling transform:
for $\lambda>0$
\[u(t,x)\rightarrow \lambda^{2/p} u(\lambda^2t,\lambda x),\ u_0(x)\rightarrow \lambda^{2/p} u_0(\lambda x).\]
Then the space $\dot{H}^{s_{sch}}$, where
\[s_{sch}=\frac{n}{2}-\frac{2}{p},\]
is the critical space to \eqref{eq:NLS} in the sense of scaling,
namely, $\norm{\lambda^{2/p} u_0(\lambda
\cdot)}_{\dot{H}^{s_{sch}}}=\norm{u_0}_{\dot{H}^{s_{sch}}}$. In
particular, if $p<4/n$, then $s_{sch}<0$, which is our main concern.

The well-posedness and scattering for the nonlinear Schr\"odinger
equation \eqref{eq:NLS} were extensively studied. We refer the
readers to \cite{CaWe,Bour,I-team1,KeMe1,Visan,Dodson2d,Dodson3d}
and the reference therein. It is well-known that the threshold of
$\dot{H}^s$-wellposedness for \eqref{eq:NLS} is $s\geq
\max(0,s_{Sch})$. However, in the radial case we prove the following

\begin{theorem}\label{thm:Schwp}
Assume $n\geq 2$, $0<p<4/n$, $s_{sch}=\frac{n}{2}-\frac{2}{p}$,
$\frac{1-n}{2n+1}\leq s_{sch}<0$, and $u_0$ is radial. Then we have

(1) Small data scattering: If $\norm{u_0}_{\dot{H}^{s_{sch}}}\leq
\delta$ for some $\delta\ll 1$, then there exist a unique global
solution $u$ to \eqref{eq:NLS} such that
\[u\in C(\R:\dot{H}^{s_{sch}})\cap L_{t,x}^{\frac{p(n+2)}{2}}(\R\times \R^n),\]
and $u_{\pm}\in \dot{H}^{s_{sch}}$ such that
$\norm{u-S(t)u_{\pm}}_{\dot{H}^{s_{sch}}}\rightarrow 0$, as
$t\rightarrow \pm \infty$.

(2) Large data local well-posedness: If $u_0\in \dot{H}^s$ for some
$s_{sch}\leq s<0$, then there exists $T>0$ and a unique solution
$u\in C((-T,T):\dot{H}^{s})\cap
L_{t,x}^{\frac{2(n+2)}{n-2s}}((-T,T)\times \R^n)$.
\end{theorem}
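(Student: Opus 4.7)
The plan is to carry out a standard contraction-mapping argument on the Duhamel formulation
\[
\Phi(u)(t) = S(t)u_0 - i\mu\int_0^t S(t-s)\bigl(|u|^p u\bigr)(s)\,ds,
\]
working in a resolution space built from the radial Strichartz estimates of Proposition \ref{prop:StriSch}. The scaling-critical Strichartz norm is $L^{q_0}_{t,x}$ with $q_0 = p(n+2)/2$, which satisfies the gap relation $\frac{n+2}{q_0} = \frac{n}{2} - s_{sch}$ by design. The assumption $s_{sch} \ge \frac{1-n}{2n+1}$ translates exactly into $q_0 \ge \frac{4n+2}{2n-1}$, placing $(q_0,q_0)$ at or beyond the endpoint of the $n$-D radial Schr\"odinger-admissible range; this is the regime where the endpoint case of Theorem \ref{thm:main} is indispensable.

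For part (1), I would first verify that both the primal pair $(q_0,q_0)$ and the dual pair $(\tilde q,\tilde q)$ with $\tilde q = p(n+2)/(pn-2)$ are radial Schr\"odinger-admissible under our assumptions; a direct computation at the threshold $s_{sch} = \frac{1-n}{2n+1}$ yields $\tilde q = 2(2n+1)(n+2)/(2n^2-n+2) > \frac{4n+2}{2n-1}$ for $n \ge 2$, so the dual pair lies strictly inside the admissible region while the primal pair sits at the boundary. Applying Proposition \ref{prop:StriSch} with $\gamma = s_{sch}$ and H\"older's inequality then gives
\[
\norm{\Phi(u)}_{L^{q_0}_{t,x}(\R\times\R^n)} \les \norm{u_0}_{\dot{H}^{s_{sch}}} + \norm{u}_{L^{q_0}_{t,x}}^{p+1},
\]
since $\tilde q'(p+1) = q_0$ by our choice of exponents. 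An analogous difference estimate closes the contraction on a small ball in $L^{q_0}_{t,x}(\R\times\R^n)$ with threshold $\delta$ independent of time, while the non-retarded bound yields $u \in C(\R;\dot H^{s_{sch}})$. The scattering states $u_\pm$ are then obtained from the Cauchy criterion applied to $S(-t)u(t)$ in $\dot H^{s_{sch}}$ as $t \to \pm\infty$.

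For part (2) I would run the same fixed-point scheme on a finite interval $(-T,T)$ with resolution space $L^{q_s}_{t,x}$, where $q_s = 2(n+2)/(n-2s)$ enforces the gap condition with $\gamma = s$. In the strictly subcritical regime $s > s_{sch}$ one gains a positive power of $T$ from H\"older in time, so the contraction closes for arbitrary data size by taking $T$ small. In the borderline case $s = s_{sch}$ the norm $\norm{S(t)u_0}_{L^{q_{s_{sch}}}_{t,x}(\R\times\R^n)}$ is finite by Proposition \ref{prop:StriSch}, hence by absolute continuity of the integral it becomes arbitrarily small on a sufficiently short interval $(-T,T)$; this substitutes for data smallness and again closes the contraction.

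The main obstacle is precisely the endpoint $s_{sch} = \frac{1-n}{2n+1}$, where $(q_0,q_0)$ lies exactly on the boundary of the radial-admissible region. Any weaker version of Theorem \ref{thm:main} excluding $q = \frac{4n+2}{2n-1}$ (such as Shao's original result) would force $s_{sch}$ strictly above this threshold and destroy the sharp range. Once the endpoint estimate from Section 2, obtained via the double-weight Hardy--Littlewood--Sobolev inequality, is fed into Proposition \ref{prop:StriSch}, the nonlinear bootstrap is routine.
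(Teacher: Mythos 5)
Your proposal is correct and follows essentially the same route as the paper: contraction mapping on the Duhamel integral in the scaling-critical diagonal space $L^{q_0}_{t,x}$ with $q_0=p(n+2)/2=\tfrac{2(n+2)}{n-2s_{sch}}$, dual pair $\tilde q=\tfrac{2(n+2)}{n+2s_{sch}}$, fed by Proposition \ref{prop:StriSch}, with the threshold $s_{sch}=\tfrac{1-n}{2n+1}$ corresponding exactly to the endpoint $q_0=\tfrac{4n+2}{2n-1}$ of Theorem \ref{thm:main}; your explicit check that $\tilde q$ stays strictly admissible at the threshold is a welcome detail the paper leaves implicit. For part (2) in the subcritical regime $s>s_{sch}$ the paper makes an explicit (non-diagonal) choice of $(\tilde q,\tilde r)$ to realize the positive power of $T$ with $(p+1)\tilde r'=q_s$; your ``gain a power of $T$ from H\"older in time'' is the same idea but you should record that this forces $\tilde q,\tilde r$ off the diagonal, since $p$ is tied to $s_{sch}$ rather than to $s$.
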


\begin{proof}
The proof is quite standard. The main point is to choose the
resolution space. By Duhamel's principle, we have
\[
u=\Phi_{u_0}(u)=S(t)u_0 + \mu \int_0^t S(t-s)
(|u|^{\frac{4}{n-2s_{sch}}}u)(s) ds.
\]
First, we show (1). Take\footnote{Indeed, the choice of index was
determined by a group of linear equation or inequalities. The choice
is not unique, and we choose the simple one here. We will remark
more on this for the wave equation.}
\[q=r= \frac{2(n+2)}{n-2s_{sch}}, \quad \tilde{q}=\tilde{r}=
\frac{2(n+2)}{n+2s_{sch}}.\] It is easy to verify that
$(q,r),(\tilde q,\tilde r)$ satisfy the conditions in Proposition
\ref{prop:StriSch} with $\gamma=s_{sch}$. Thus by applying
Proposition \ref{prop:StriSch}, we get
\begin{align*}
\|\Phi_{u_0}(u)\|_{L^q_{t,x}} +
\|D^{s_{sch}}\Phi_{u_0}(u)\|_{L^\infty_{t}L^{2}_x} \les &
\norm{S(t)u_0}_{L_{t}^qL_x^q} + \||u|^{\frac{4}{n-2s_{sch}}}u\|_{L^{\tilde{q}'}_{t,x}}\\
\les & \|D^{s_{sch}} u_0\|_{L^2} +
\|u\|^{1+\frac{4}{n-2s_{sch}}}_{L^{\frac{(n-2s_{sch}+4)\tilde{q}'}{n-2s_{sch}}}_{t,x}}.
\end{align*}
Note that $\tilde{q}'= \frac{2(n+2)}{n-2s_{sch}+4}$, then
$\frac{(n-2s_{sch}+4)\tilde{q}'}{n-2s_{sch}} = q$. Thus part (1)
follows from standard fixed point arguments (\cite{CaWe}).

Next, we show part (2). Local well-posedness for equation
\eqref{eq:NLS} in $\dot{H}^{s_{sch}}$ follows from the fact that for
$q=\frac{2(n+2)}{n-2s_{sch}}<\infty$
\[\lim_{T\rightarrow 0}\norm{S(t)u_0}_{L_{t\in
[-T,T]}^qL_x^q}=0.\] Now we assume $s_{sch}<s<0$. Take
$q=r=\frac{2(n+2)}{n-2s}$ and
\[\frac{1}{\tilde q}=\frac{n+2s}{2n+4}-\frac{2n(s-s_{sch})}{(n+2)(n-2s_{sch})}, \ \frac{1}{\tilde r}=\frac{n+2s}{2n+4}+\frac{4s-4s_{sch}}{(n+2)(n-2s_{sch})}.\]
It is easy to verify that $(q,r),(\tilde q,\tilde r)$ satisfy the
conditions in Proposition \ref{prop:StriSch} with $\gamma=s$, and
$(p+1)\tilde r'= q$. Thus by applying Proposition
\ref{prop:StriSch}, we get for some $\theta>0$
\begin{align*}
\|\Phi_{u_0}(u)\|_{L^q_{t,x}} +
\|D^{s}\Phi_{u_0}(u)\|_{L^\infty_{t}L^{2}_x}
\les & \|D^{s} u_0\|_{L^2} + \||u|^{\frac{4}{n-2s_{sch}}}u\|_{L_{t\in [-T,T]}^{\tilde{q}'}L_x^{\tilde r'}}\\
\les & \|D^{s} u_0\|_{L^2} +T^{\theta}
\|u\|^{1+\frac{4}{n-2s_c}}_{L^{\frac{(n-2s_{sch}+4)\tilde{r}'}{n-2s_{sch}}}_{t,x}}.
\end{align*}
Thus part (2) also follows from standard fixed-point argument.
\end{proof}

\begin{remark}\rm
In part (2) of Theorem \ref{thm:Schwp}, the existence time $T$
depends only on $\norm{u_0}_{\dot{H}^s}$ for $s>s_{sch}$, but on the
profile of $u_0$ for $s=s_{sch}$.

Actually, we can obtain more conclusions than Theorem
\ref{thm:Schwp}. Using the similar proof, we can obtain if $s_{sch}<
\frac{1-n}{2n+1}$, namely $0<p< \frac{8n+4}{2n^2+3n-2}$, large data
local well-posedness for \eqref{eq:NLS} hold in $\dot{H}^{s}$ for
$s>s_{1}$ with
\begin{align}
s_1=
\begin{cases}
\frac{1-n}{2n+1}, & \frac{2}{n}\leq p < \frac{8n+4}{2n^2+3n-2};\\
\frac{np-n^2p}{2(-1 + 2 n + n p)}, & p\leq \frac{2}{n}.
\end{cases}
\end{align}
Actually, $s_0$ is determined by the following groups of linear
equations:
\begin{align*}
\left\{
\begin{array}{l}
2\leq q,r,\tilde q,\tilde r\leq \infty,\\
\frac{2}{q}+\frac{2n-1}{r}=n-\frac{1}{2},\\
\frac{2}{q}+\frac{n}{r}=\frac{n}{2}-\gamma,\\
\frac{2}{\tilde
q}+\frac{n}{\tilde r}=\frac{n}{2}+\gamma,\\
(p+1)\tilde r'=r, \tilde q=\infty.
\end{array}
\right.
\end{align*}
Then we can also obtain $(q,r),(\tilde q,\tilde r)$ for $s>s_0$,
which can be used to prove local well-posedness as in the proof of
Theorem \ref{thm:Schwp}.

\begin{figure}[htbh]
\begin{center}
\includegraphics{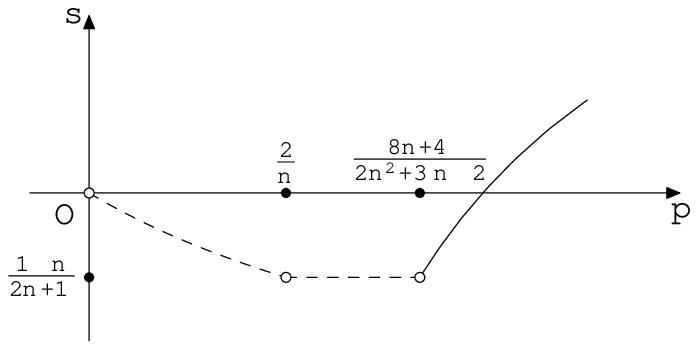}
\end{center}
\caption{ \mbox{$\dot{H}^s$ well-posedness for NLS}}\label{hfig1}
\end{figure}

The same conclusions obtained above certainly hold for general
nonlinear terms $F(u)$, for example, if $F$ satisfies
\begin{align}\label{eq:nonlinearF}
|F(u)|\les& |u|^{p+1},\nonumber\\
|u||F'(u)|\sim& |F(u)|.
\end{align}
We describe the regularity $s$ for $\dot{H}^s$ local well-posedness
and nonlinear increasing rate $p+1$ in Figure 3.
\end{remark}

\begin{remark}\label{rem:inho}
Part (2) in Theorem \ref{thm:Schwp} also holds for data $u_0\in
H^s$. Indeed, we simply construct the resolution space as following
\[\norm{u}_{Y_T}=\norm{P_{\leq 0}u}_{L_{[-T,T]}^\infty L^2}+\norm{P_{\geq 1}u}_{L^{q}_{|t|\leq T,x}}.\]
\end{remark}

\subsection{Nonlinear wave equations}
Next, we consider the semi-linear wave equations:
\begin{align}\label{eq:NLW}
\left \{
\begin{array}{l}
\partial_{tt}u-\Delta u=\mu|u|^pu,\quad (t,x)\in \R\times \R^n,\\
u(0)=u_0(x),\ u_t(0)=u_1(x).
\end{array}
\right.
\end{align} where
$u(t,x):\R\times\R^n\rightarrow\R,\ n\geq 2$, $\mu=\pm 1$, $u_0\in
\dot{H}^s$, $u_1\in \dot{H}^{s-1}$. It is easy to see that equation
\eqref{eq:NLW} is invariant under the following scaling transform:
for $\lambda>0$
\[u(t,x)\rightarrow \lambda^{2/p} u(\lambda t,\lambda x),\ u_0(x)\rightarrow \lambda^{2/p} u_0(\lambda x), u_1(x)\rightarrow \lambda^{(2+p)/p} u_1(\lambda x).\]
Then the space $\dot{H}^{s_{w}}\times \dot{H}^{s_{w}-1}$, where
\[s_{w}=\frac{n}{2}-\frac{2}{p},\]
is the critical space to \eqref{eq:NLW} in the sense of scaling,
namely, $\norm{\lambda^{2/p} u_0(\lambda
\cdot)}_{\dot{H}^{s_{w}}}=\norm{u_0}_{\dot{H}^{s_{w}}}$.

The well-posedness and scattering for equation \eqref{eq:NLW} were
deeply studied. We refer the readers to
\cite{GSV,LindbladSogge,SStruwe,Sogge,Gril1,Gril2,SStruwe2,SStruwe3,SStruwe4,KT,Tao,KeMe2}
and the reference therein. In this section, we study the
well-posedness theory for \eqref{eq:NLW} in $\dot{H}^s\times
\dot{H}^{s-1}$ with radial initial data. As was indicated in the
introduction, the sharp results at the critical regularity were
obtained in \cite{LindbladSogge} if $s_w\geq 1/2$. Thus we restrict
ourselves to the case $s_w<1/2$, and we find an threshold $s_0(n)$
for the critical GWP in the radial case:
\begin{align}\label{eq:s0n}
s_0(n)=
\begin{cases}
\frac{5-\sqrt{17}}{4}, &n=2,\\
\frac{12-\sqrt{129}}{6}, &n=3,\\
\frac{n^2+3n-3-\sqrt{n^4+6n^3-n^2-14n+9}}{4n-4},&n\geq 4.
\end{cases}
\end{align}
It seems that this is the optimal regularity by our methods. We
prove the following
\begin{theorem}\label{thm:Wavewp}
Assume $n\geq 2$, $0<p<\frac{4}{n-1}$,
$s_{w}=\frac{n}{2}-\frac{2}{p}$, $s_0(n)<s_{w}<1/2$ with $s_0(n)$
given by \eqref{eq:s0n}, and $u_0$ is radial. Then

(1) If
$\norm{u_0}_{\dot{H}^{s_{w}}}+\norm{u_1}_{\dot{H}^{s_w-1}}\leq
\delta$ for some $\delta\ll 1$, then there exists a unique global
solution $u$ to \eqref{eq:NLW} such that \[u\in
C(\R:\dot{H}^{s_{w}})\cap C^1(\R:\dot{H}^{s_w-1})\cap
L_t^qL_x^r(\R\times \R^n),\]where $(q,r)$ are given in the proof,
and $(u_{\pm},v_\pm)\in \dot{H}^{s_{w}}\times \dot{H}^{s_w-1}$ such
that
\[\norm{u-W'(t)u_{\pm}}_{\dot{H}^{s_w}}+\norm{u_t-W(t)v_{\pm}}_{\dot{H}^{s_w-1}}\rightarrow 0, \mbox{ as }
t\rightarrow \pm \infty.\]

(2) If $u_0\in \dot{H}^s$ for some $s_{w}\leq s<1/2$, then there
exists $T>0$ and a unique solution $u$ to \eqref{eq:NLW} defined on
$(-T,T)$ such that
\[u\in
C((-T,T):\dot{H}^{s})\cap C^1((-T,T):\dot{H}^{s-1})\cap
L_t^qL_x^r((-T,T)\times \R^n),\] where $(q,r)$ is the index given by
part (1) for $s_w=s$.
\end{theorem}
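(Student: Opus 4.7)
The plan is to mirror the Schr\"odinger argument in Theorem~\ref{thm:Schwp}, replacing Proposition~\ref{prop:StriSch} with its wave analogue Proposition~\ref{prop:Striwave}. By Duhamel's formula one writes
\[
u(t)=W'(t)u_0+W(t)u_1-\mu\int_0^t W(t-s)(|u|^p u)(s)\,ds=:\Phi_{(u_0,u_1)}(u),
\]
and the goal is to produce a complete metric space based on the norm
\[
\|u\|_{L_t^qL_x^r}+\|u\|_{C(\R:\dot H^{s_w})}+\|\partial_t u\|_{C(\R:\dot H^{s_w-1})}
\]
on which $\Phi$ is a contraction.

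The first step is to select a pair of radial wave-admissible exponents $(q,r)$ and $(\tilde q,\tilde r)$ satisfying the gap conditions of Proposition~\ref{prop:Striwave} with $\gamma=s_w$,
\[
\frac{1}{q}+\frac{n}{r}=\frac{n}{2}-s_w,\qquad \frac{1}{\tilde q}+\frac{n}{\tilde r}=\frac{n}{2}-1+s_w,
\]
together with the nonlinear matching $(p+1)\tilde q'=q$ and $(p+1)\tilde r'=r$, which allows H\"older in space-time to estimate $\||u|^pu\|_{L_t^{\tilde q'}L_x^{\tilde r'}}\leq\|u\|_{L_t^qL_x^r}^{p+1}$. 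Given such a pair, Proposition~\ref{prop:Striwave} yields
\[
\|\Phi(u)\|_{L_t^qL_x^r}+\|\Phi(u)\|_{C(\R:\dot H^{s_w})}+\|\partial_t\Phi(u)\|_{C(\R:\dot H^{s_w-1})}\lesssim \|u_0\|_{\dot H^{s_w}}+\|u_1\|_{\dot H^{s_w-1}}+\|u\|_{L_t^qL_x^r}^{p+1},
\]
and the parallel difference estimate for $\Phi(u)-\Phi(v)$ is immediate. Part~(1) then follows from a standard fixed-point argument on a small ball, together with the usual scattering tail argument showing that $\int_T^{\pm\infty} W(\cdot-s)(|u|^pu)(s)\,ds\to 0$ in $\dot H^{s_w}$ and its time derivative in $\dot H^{s_w-1}$ as $T\to\pm\infty$. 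For part~(2), with $s>s_w$ one replaces $s_w$ by $s$ in the gap conditions and weakens the nonlinear matching to a strict inequality in the time exponent, gaining a factor $T^\theta$ with some $\theta>0$ from H\"older in time, which closes the contraction on a short interval.

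The main obstacle is entirely the search for admissible $(q,r),(\tilde q,\tilde r)$. One must satisfy simultaneously the two gap equalities, the nonlinear matching (which forces $(\tilde q,\tilde r)$ to be determined by $(q,r)$ and $p$), and the two radial wave-admissibility constraints $\frac{1}{q}+\frac{n-1}{r}\leq\frac{n-1}{2}$ and $\frac{1}{\tilde q}+\frac{n-1}{\tilde r}\leq\frac{n-1}{2}$, along with $q,r,\tilde q,\tilde r\geq 2$. Eliminating the tilde-variables yields a system in $1/q,1/r$ whose feasibility is controlled by a quadratic inequality in $s_w$; solving for the smallest $s_w$ that makes the system feasible produces exactly the threshold $s_0(n)$ in \eqref{eq:s0n}. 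The three cases $n=2$, $n=3$, and $n\geq 4$ arise from the different radial wave-admissible regions $A_2$ versus $A_{\geq 3}$ and from which of the several constraints above is binding in low dimensions. The hypothesis $s<1/2$ keeps us inside the regime not covered by the non-radial results of \cite{LindbladSogge}; once the admissible exponents are exhibited explicitly, the remaining fixed-point and scattering steps are entirely routine.
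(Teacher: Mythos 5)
Your proposal takes essentially the same route as the paper: Duhamel's formula plus the radial wave Strichartz estimates of Proposition~\ref{prop:Striwave}, with the contraction closing as soon as one exhibits radial wave-admissible pairs $(q,r)$ and $(\tilde q,\tilde r)$ satisfying the two gap conditions together with the H\"older matching $(p+1)\tilde q'=q$, $(p+1)\tilde r'=r$, and the threshold $s_0(n)$ arising as the feasibility boundary for this system. That structural picture matches the paper exactly.

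What the proposal leaves out, however, is the part that actually carries the content of the theorem: you assert that eliminating $(\tilde q,\tilde r)$ leads to a quadratic inequality in $s_w$ whose critical value is $s_0(n)$, but you never produce the exponents or verify feasibility. The paper does this concretely, splitting into Case~1 ($\tfrac{1}{2n}<s_w\le\tfrac12$, where the diagonal pair $q=r=\tfrac{2n+2}{n-2s_w}$, $\tilde q=\tilde r=\tfrac{2n+2}{n+2s_w-2}$ works for all $n$) and Case~2 ($s_0(n)<s_w\le\tfrac{1}{2n}$, with separate explicit choices for $n=2$, $n=3$, and $n\ge4$, reflecting which of the admissibility constraints in $A_2$ versus $A_{\ge3}$ is binding). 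Since the value of $s_0(n)$ is precisely what the theorem asserts, exhibiting these pairs and checking the constraints is the step that still needs to be carried out to turn the proposal into a proof; everything else you describe (the fixed-point iteration, the scattering tail, and the $T^\theta$ gain for part~(2)) is indeed routine once the exponents are in hand.
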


\begin{proof}[Proof of Theorem \ref{thm:Wavewp}]
By Duhamel's principle, we have
\[
u=\Phi_{u_0,u_1}(u)=W'(t)u_0+W(t)u_1+\mu\int_0^t W(t-s)
(|u|^{\frac{4}{n-2s_{w}}}u)(s) ds.
\]
First we show part (1) and explain how $s_0$ is obtained. The main
issue is to choose the admissible pairs $(q,r),(\tilde q,\tilde r)$
so that the contraction argument is closed\footnote{The ideas for
the Schr\"odinger equations are the same. However, the choice of the
index for the wave equations is more complicated.}. By the choice of
$(q,r)$ and $(\tilde q,\tilde r)$, we should have
\begin{align*}
\|\Phi_{u_0,u_1}(u)\|_{L_t^qL_x^r}
\les & \norm{W'(t)u_0}_{L_t^qL_x^r}+\norm{W(t)u_1}_{L_t^qL_x^r} + \||u|^{\frac{4}{n-2s_{w}}}u\|_{L_t^{\tilde{q}'}L_x^{\tilde r'}}\\
\les & \|D^{s_{w}} u_0\|_{L^2}+\|D^{s_{w}-1} u_1\|_{L^2}+
\|u\|^{1+\frac{4}{n-2s_{w}}}_{L_t^qL_x^r}.
\end{align*}
The inequalities above hold if $(q,r),(\tilde q,\tilde r)$ satisfy
\begin{align}\label{eq:waveglobalcdt}
\left\{
\begin{array}{l}
(q,r),(\tilde q,\tilde r)\mbox{ is n-D radial wave admissible},\\
\frac{1}{q}+\frac{n}{r}=\frac{n}{2}-s_w,\\
\frac{1}{\tilde
q}+\frac{n}{\tilde r}=\frac{n}{2}-1+s_w,\\
(p+1)\tilde r'=r,\ (p+1)\tilde q'=q.
\end{array}
\right.
\end{align}
Therefore, once we find solution to \eqref{eq:waveglobalcdt}, then
part (1) follows from standard arguments. We give a solution to
\eqref{eq:waveglobalcdt} case by case:

Case 1: $\frac{1}{2n}<s_{w}\leq 1/2$.
\[(q,r)=(\frac{2n+2}{n-2s_{w}},\frac{2n+2}{n-2s_{w}}),\, (\tilde{q},\tilde{r})=
(\frac{2n+2}{n+2s_{w}-2},\frac{2n+2}{n+2s_{w}-2}).\]

Case 2: $s_0<s_{w}\leq \frac{1}{2n}$.

Case 2a: $n=2$.
\[(q,r)=(\frac{3-s_w}{(1-s_w)^2},\frac{3-s_w}{1-s_w}),\ (\tilde{q},\tilde{r})=
(\frac{1}{s_w},\infty).\]

Case 2b: $n=3$. For some $0<\theta\ll 1$,
\[(\frac{1}{q},\frac{1}{r})=(2s_w-3\theta,\frac{1}{2}-s_w+\theta),\ (\tilde{q},\tilde{r})=
(\frac{q}{q-p-1},\frac{r}{r-p-1}).\]

Case 2c: $n\geq 4$.
\begin{align*}
(q,r)=(\frac{2n+8-4s_{w}}{n-2s_{w}},
\frac{2n^2+8n-4ns_{w}}{n^2+3n-4ns_{w}+4s_{w}^2-6s_{w}}),\,
(\tilde{q},\tilde r)=(2,\frac{2n}{n+2s_{w}-3}).
\end{align*}
Therefore, part (1) is proved.

Next we show part (2). Local well-posedness in $\dot{H}^{s_{w}}$
follows from the fact that for the choice of $(q,r)$ in the proof of
part (1)
\[\lim_{T\rightarrow 0}\norm{W'(t)u_0}_{L_{t\in
[-T,T]}^qL_x^r}+\norm{W(t)u_1}_{L_{t\in [-T,T]}^qL_x^r}=0.\] Now we
assume $s_{w}<s<1/2$. The proof is very similar to the Schr\"odinger
equations. We take $(q,r)$ to be the one corresponding to $s$ in
part (1), and then take $(\tilde q,\tilde r)$ to close the argument.
We omit the details.
\end{proof}

\begin{remark}
As the Schr\"odinger equation, if $s_{w}\leq s_0(n)$, namely $p\leq
\frac{4}{n-2s_0(n)}$, we can't prove well-posedness in
$\dot{H}^s\times \dot{H}^{s-1}$ down to $s=s_{w}$. However, we can
also improve the well-posedness results in \cite{LindbladSogge}. We
only mention the case $n\geq 4$, we obtain if $\frac{3}{n}<p\leq
\frac{4}{n-2s_0(n)}$, then large data local well-posedness hold in
$\dot{H}^s\times \dot{H}^{s-1}$ for $s>s_2$ with
\[s_2=\frac{np-3}{2np+2n-2}.\]
Indeed, take $\tilde q=2,\tilde r=\frac{2n}{n-3+2s}$, and $(q,r)$
such that
\begin{align*}
\frac{1}{q}=\frac{n}{2}-\frac{n}{r}-s,\
\frac{1}{r}=\frac{1}{p+1}-\frac{1}{(p+1)\tilde r}.
\end{align*}
Then by this choice we can prove the local well-posedness using the
similar arguments as the proof of Theorem \ref{thm:Wavewp}.

The same results hold for general nonlinear terms $F(u)$, e.g. $F$
satisfying \eqref{eq:nonlinearF}. We describe the regularity $s$ for
$\dot{H}^s\times \dot{H}^{s-1}$ local well-posedness and nonlinear
increasing rate $p+1$ for \eqref{eq:NLW} in Figure 4.

\begin{figure}[htbh]
\begin{center}
\includegraphics{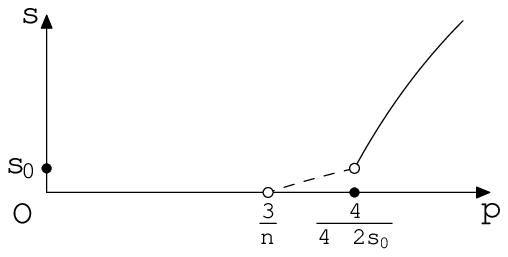}
\end{center}
\caption{\mbox{$\dot{H}^s\times \dot{H}^{s-1}$ well-posedenss for
NLW}}\label{hfig1}
\end{figure}
\end{remark}

\subsection{Nonlinear fractional-order Schr\"odinger equation}

In this section, we apply the improved Strichartz estimates to the
nonlinear fractional-order Schr\"odinger equation:
\begin{align}\label{eq:FNLS}
i\partial_tu+(\sqrt{-\Delta})^{\sigma} u= \mu |u|^{p}u,\ \
u(0)=u_0(x),
\end{align}
where $u(t,x):\R\times\R^n\rightarrow\C,\ n\geq 2$, $1<\sigma<2$,
$\mu=\pm 1$, $u_0\in \dot{H}^s$. To the best of our knowledge, there
are few results concerning the well-posedness for \eqref{eq:FNLS}.
The main reason is that the usual Strichartz estimates derived by
the decay estimates have a loss in derivatives except the trivial
one $L_t^\infty L_x^2$. Then one may need to use other methods, for
example, local smoothing effect methods, and using of the $X^{s,b}$
space. These methods will certainly be able to provide some results
at least when $p$ is an even integer.

However, in the radial case, we obtain more Strichartz estimates for
\eqref{eq:FNLS}, some of which don't have a loss in derivative. Then
our idea is to use these kinds of estimates. The equation
\eqref{eq:FNLS} has the following two symmetries which we will use.
One is the scaling invariance: for any $\lambda>0$, \eqref{eq:FNLS}
is invariant under the following transformation
\[u(t,x)\rightarrow \lambda^{\sigma/p} u(\lambda^\sigma t,\lambda x),\ u_0(x)\rightarrow \lambda^{\sigma/p} u_0(\lambda x).\]
The others are the conservation laws: if $u$ is smooth solution to
\eqref{eq:FNLS}, then
\begin{align*}
\frac{d}{dt}\int_{\R^n}|u|^2dx=&0,\quad (mass)\\
\frac{d}{dt}\int_{\R^n}||\nabla|^{\sigma/2}
u|^2-\frac{\mu}{p+2}|u|^{p+2}dx=&0. \quad (energy)
\end{align*}
Then we see the space $\dot{H}^{s_c}$, where
\[s_c=\frac{n}{2}-\frac{\sigma}{p}\]
is critical in the sense of scaling, and $\mu=-1$ is the defocusing
case while $\mu=1$ corresponds to the focusing case. We will use the
following lemma:

\begin{lemma}[Fractional chain rule, \cite{Christ2}]\label{lem:Fchain}
Suppose $G\in C^1(\C)$, $s\in (0,1]$, and $1<p,p_1,p_2<\infty$ are
such that $\frac{1}{p}=\frac{1}{p_1}+\frac{1}{p_2}$. Then
\[\norm{|\nabla|^sG(u)}_p\les \norm{G'(u)}_{p_1}\norm{|\nabla|^su}_{p_2}.\]
\end{lemma}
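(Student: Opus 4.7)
The plan is to follow the original Christ--Weinstein argument, which proceeds via a Littlewood--Paley (paraproduct) decomposition and reduces the fractional-derivative estimate to the classical chain rule applied at each frequency scale $2^k$. The case $s=1$ is immediate from $\nabla G(u) = G'(u)\nabla u$ together with H\"older's inequality, so the substance lies in the range $s \in (0,1)$.

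First I would write the telescoping identity
$$G(u) = \sum_{k \in \Z} \bigl[G(P_{\le k} u) - G(P_{\le k-1} u)\bigr] = \sum_{k \in \Z} \Psi_k \cdot P_k u,$$
where by the mean-value theorem $\Psi_k(x) = \int_0^1 G'\bigl(P_{\le k-1} u(x) + \theta P_k u(x)\bigr)\, d\theta$. Next, since $P_k u$ is frequency-localized at scale $2^k$ and $\Psi_k$ is essentially smoother at that scale, $|\nabla|^s(\Psi_k P_k u)$ is morally $2^{ks}\Psi_k P_k u$ up to a manageable paraproduct remainder. Combining with the square-function characterization
$$\||\nabla|^s F\|_p \sim \Bigl\|\bigl(\sum_k 2^{2ks}|P_k F|^2\bigr)^{1/2}\Bigr\|_p,$$
one is reduced to
$$\||\nabla|^s G(u)\|_p \les \Bigl\| \sup_k |\Psi_k| \cdot \bigl(\sum_k 2^{2ks}|P_k u|^2\bigr)^{1/2}\Bigr\|_p.$$
Applying H\"older with exponents $(p_1, p_2)$ and using the square-function characterization once more identifies the second factor with $\||\nabla|^s u\|_{p_2}$, so the task reduces to controlling $\|\sup_k |\Psi_k|\|_{p_1}$ by $\|G'(u)\|_{p_1}$.

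The remaining ingredient is the pointwise bound $\sup_k |\Psi_k(x)| \les M(G'(u))(x)$, after which the Hardy--Littlewood maximal inequality on $L^{p_1}$ closes the argument. This pointwise bound is the main obstacle: $G'$ is only assumed continuous, and $P_{\le k-1} u + \theta P_k u$ can take values far from those of $u(x)$, so one cannot simply identify the argument of $G'$ with a smoothed version of $u$ near $x$. Christ and Weinstein circumvent this by splitting into the regime where $|P_k u(x)|$ is much smaller than $|P_{\le k-1} u(x)|$ (on which continuity of $G'$ together with the standard bound $|P_{\le k-1} u(x)| \les M(u)(x)$ suffices to estimate $\Psi_k$) and the complementary regime, whose contribution is absorbed by a further square-function estimate exploiting the cancellation and decay properties of $P_k u$. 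Once this delicate step is in hand, everything else is routine paraproduct and maximal-function bookkeeping.
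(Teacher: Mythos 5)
The paper does not prove this lemma at all: it is quoted verbatim as a black box from Christ--Weinstein \cite{Christ2}, so there is no ``paper's own proof'' for your argument to be measured against. What one can still do is assess your reconstruction on its own terms, and there the central step does not hold up. Your reduction via the telescoping identity $G(u)=\sum_k \Psi_k\, P_k u$ and the weighted square-function characterization of $\||\nabla|^sF\|_p$ is sound, and after H\"older the whole lemma does indeed hinge on controlling $\sup_k|\Psi_k|$ in $L^{p_1}$. But the pointwise bound $\sup_k|\Psi_k(x)|\lesssim M(G'(u))(x)$ that you reach for is not established and is, as a bare inequality, false for general $C^1$ nonlinearities: $\Psi_k(x)$ involves $G'$ evaluated at $P_{\le k-1}u(x)+\theta P_k u(x)$, which is a smoothed value of $u$, not a value that $u$ actually attains near $x$, and nothing prevents $G'$ from being large at that smoothed value while $G'(u)$ is small everywhere in a neighborhood of $x$ (e.g.\ when $u$ oscillates about $0$, so $P_{\le k-1}u(x)\approx 0$, while $G'(0)$ is large and $G'(u(y))$ is small for $y$ near $x$). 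You acknowledge this as ``the main obstacle,'' but your proposed repair --- splitting on whether $|P_k u(x)|$ is small compared to $|P_{\le k-1}u(x)|$ and invoking ``continuity of $G'$'' plus $|P_{\le k-1}u|\lesssim Mu$ on one regime and an unspecified ``further square-function estimate'' on the other --- is not an argument. Continuity of $G'$ together with $|P_{\le k-1}u(x)|\lesssim Mu(x)$ only bounds $G'(P_{\le k-1}u(x))$ by $\sup_{|z|\le Mu(x)}|G'(z)|$, which is a quantity involving $G'$ on a range of values of $z$, not the maximal function of $G'(u)$; it cannot be converted into $\|G'(u)\|_{p_1}$ without further structure on $G$ (e.g.\ power-type or monotone growth of $|G'|$), and the contribution from the opposite regime is simply asserted to vanish. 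The actual Christ--Weinstein proof addresses exactly this point by a more careful decomposition and is genuinely delicate; as written your sketch names the gap but does not close it.
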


In view of the conservation laws, we only consider the nonlinear
terms between mass-critical to energy-critical, namely,
$\frac{2\sigma}{n}\leq p\leq\frac{2\sigma}{n-\sigma}$. First we
consider the critical $\dot H^s$ well-posedness theory of
\eqref{eq:FNLS}. For the simplicity of notation, we denote
$S_\sigma(t)= e^{it(\sqrt{-\Delta})^{\sigma}}$. We prove the
following

\begin{theorem}\label{thm:FSchwp}
Assume $n\geq 2$, $\frac{2n}{2n-1}\leq \sigma<2$, $p\ge
\frac{2\sigma}{n}$ $s_c=\frac{n}{2}-\frac{\sigma}{p}$, and $u_0\in
H^{s_c}$ is radial. Then the IVP \eqref{eq:FNLS} admits

(1) Small data scattering: If $\norm{u_0}_{\dot{H}^{s_c}}\leq
\delta$ for some $\delta\ll 1$, then there exists a unique global
solution \[u\in C(\R:{H}^{s_c})\cap
L_t^{p+2}L_x^{\frac{2n(p+2)}{2(n-\sigma)+np}}(\R\times \R^n),\] and
$u_{\pm}\in \dot{H}^{s_c}$ such that
$\norm{u-S_\sigma(t)u_{\pm}}_{\dot{H}^{s_c}}\rightarrow 0$, as
$t\rightarrow \pm \infty$.

(2) Large data local well-posedness: There exists $T=T(u_0)>0$ and a
unique solution $u\in C((-T,T):{H}^{s_c})\cap
L_t^{p+2}L_x^{\frac{2n(p+2)}{2(n-\sigma)+np}}((-T,T)\times \R^n)$.
\end{theorem}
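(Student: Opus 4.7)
The plan is to adapt the contraction-mapping arguments of Theorems \ref{thm:Schwp}--\ref{thm:Wavewp} to the fractional setting, relying on the radial Strichartz estimates of Proposition \ref{prop:StriFSch}, the fractional chain rule of Lemma \ref{lem:Fchain}, and Sobolev embedding. Since $s_c\ge 0$ under the hypothesis $p\ge 2\sigma/n$, the flavor is closer to $L^2$-critical theory, and one can close everything with a single self-dual Strichartz pair.

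First I would cast \eqref{eq:FNLS} in Duhamel form, $\Phi_{u_0}(u):=S_\sigma(t)u_0-i\mu\int_0^t S_\sigma(t-s)(|u|^p u)(s)\,ds$, and introduce the resolution space
\[
X_T:=\{u\in C([-T,T]:H^{s_c}): u,\ D^{s_c}u\in L^{p+2}_t L^r_x\},\qquad r:=\frac{2n(p+2)}{2(n-\sigma)+np}.
\]
A direct computation gives $\sigma/(p+2)+n/r=n/2$, while the admissibility inequality $\frac{2}{p+2}+\frac{2n-1}{r}\le n-\frac{1}{2}$ from Definition \ref{def:nDSch} reduces after elementary algebra to the standing hypothesis $\sigma\ge 2n/(2n-1)$. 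Hence $(p+2,r)$ is $n$-D radial Schr\"odinger admissible. Applying Proposition \ref{prop:StriFSch} at $\gamma=0$ (to both $u$ and $D^{s_c}u$, with $(p+2,r)$ also on the inhomogeneous side) yields
\[
\|\Phi_{u_0}(u)\|_{X_T}\lesssim \|u_0\|_{H^{s_c}}+\||u|^p u\|_{L^{(p+2)'}_t L^{r'}_x}+\|D^{s_c}(|u|^p u)\|_{L^{(p+2)'}_t L^{r'}_x}.
\]

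The core of the proof is therefore the nonlinear estimate $\lesssim \|u\|_{X_T}^{p+1}$. Using Lemma \ref{lem:Fchain} with $1/r'=1/p_1+1/r$ produces $\|D^{s_c}(|u|^p u)\|_{L^{r'}_x}\lesssim \|u\|^p_{L^a_x}\|D^{s_c}u\|_{L^r_x}$ for $a=pp_1=np(p+2)/(2\sigma)$. A short calculation verifies that $1/a=1/r-s_c/n$ exactly, so Sobolev embedding $\dot W^{s_c,r}(\R^n)\hookrightarrow L^a(\R^n)$ gives $\|u\|_{L^a_x}\lesssim \|D^{s_c}u\|_{L^r_x}$; H\"older in time at $(p+1)/(p+2)$ then delivers $\|D^{s_c}u\|^{p+1}_{L^{p+2}_t L^r_x}$. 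The companion bound for $\||u|^p u\|$ is handled by the same H\"older chain without Lemma \ref{lem:Fchain}, and analogous differences $\Phi_{u_0}(u)-\Phi_{u_0}(v)$ give contraction once the ball in $X_T$ is taken small enough.

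The remainder is then standard. For part (1), $\Phi_{u_0}$ is a contraction on a small ball of $X_\infty$ whenever $\|u_0\|_{\dot H^{s_c}}\le\delta$, yielding the global radial solution; scattering in $\dot H^{s_c}$ follows from the Cauchy criterion applied to $S_\sigma(-t)u(t)$ via the dual Strichartz bound on the tail $\int_t^{\pm\infty}S_\sigma(-s)(|u|^p u)(s)\,ds$ together with the global finiteness of $\|u\|_{L^{p+2}_t L^r_x}$. For part (2), working on $(-T,T)$, the smallness needed for the large-data contraction is provided by $\lim_{T\to 0}\|S_\sigma(t)u_0\|_{L^{p+2}_{|t|\le T} L^r_x}=0$ (dominated convergence, using the global Strichartz finiteness of $S_\sigma(t)u_0$), while $\|u\|_{L^\infty_t L^2}$ is absorbed via the $L^2$ Strichartz bound. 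The main obstacle throughout is the admissibility bookkeeping in the nonlinear estimate, but as shown above it collapses to the single inequality $\sigma\ge 2n/(2n-1)$, which is precisely the standing assumption.
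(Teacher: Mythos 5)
Your proposal is correct and follows essentially the same path as the paper: Duhamel form, the self-dual radial Schr\"odinger-admissible pair $(q,r)=(\tilde q,\tilde r)=(p+2,\tfrac{2n(p+2)}{2(n-\sigma)+np})$ on the $\gamma=0$ line, Proposition \ref{prop:StriFSch}, the fractional chain rule (Lemma \ref{lem:Fchain}) with the Sobolev embedding $\dot W^{s_c,r}\hookrightarrow L^{np(p+2)/(2\sigma)}$, and the standard contraction on a ball in the mixed $L^\infty_t H^{s_c}\cap L^q_t W^{s_c,r}$ space. The only cosmetic difference is that the paper first records the diagonal pair $(2+\tfrac{2\sigma}{n},2+\tfrac{2\sigma}{n})$ and notes the $\gamma=0$ estimates by interpolation before fixing $(p+2,r)$, whereas you verify admissibility of $(p+2,r)$ directly, reducing it (correctly) to $\sigma\ge 2n/(2n-1)$; the nonlinear estimate and the endgame for parts (1) and (2) are the same.
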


\begin{proof}
Since $\sigma \geq \frac{2n}{2n-1}$, then $\frac{2(n+\sigma)}{n}
\geq \frac{2(2n+1)}{2n-1}$. Thus it is easy to see that
$(2+\frac{2\sigma }{n}, 2+\frac{2\sigma }{n})$ is an n-D radial
Schr\"odinger admissible pair and then by Proposition
\ref{prop:StriFSch} we get
\[\norm{S_\sigma(t)u_0}_{L_{t,x}^{2+\frac{2\sigma
}{n}}(\R\times \R^n)}\les \norm{u_0}_{L_x^2}.\] Then interpolating
this with the trivial one $\norm{S_\sigma(t)u_0}_{L_t^\infty
L_x^2(\R\times \R^n)}\les \norm{u_0}_{L_x^2}$, we get more
estimates. The key point is that these Strichartz estimates are
without loss of regularity.

With these estimates, the proof is quite standard, for example see
\cite{Visan}. First we show part (1). By Duhamel's principle, we
have
\[
u=\Phi_{u_0}(u)=S_\sigma(t) u_0 + \mu \int_0^t S_\sigma (t-s)
(|u|^{p}u)(s) ds,
\]
Take
\[q=\tilde q=p+2, \ r=\tilde r=\frac{2n(p+2)}{2(n-\sigma)+np}.\]
It is easy to verify that $(q,r),(\tilde q,\tilde r)$ satisfy the
conditions in Proposition \ref{prop:StriFSch} with $\gamma=0$. Then
we define the set $X= B_1\cap B_2$ endowed with the metric $d(u,v):=
\|u-v\|_{L^q_tL^r_x}$, where
\begin{align*}
B_1=&\{u\in L^\infty_tH^{s_c}_x(\R\times \R^n): \
\|u\|_{L^\infty_tH^{s_c}_x} \le 2\|u_0\|_{H^{s_c}_x}
+C(n)(2\eta)^{1+p}\},\\
 B_2=&\{u\in L^{q}_t W^{s_c,r}_x(\R\times
\R^n): \ \|u\|_{L^{p+2}_t \dot W^{s_c,r}_x} \le 2\eta,\,\,
\|u\|_{L^{q}_t L^{r}_x} \le 2C(n)\|u_0\|_{L^2_x}\},
\end{align*}
with some sufficient small $\eta>0$ to be determined latter. It's
easy to see that $(X,d)$ is complete and we will show that the
solution map $\Phi_{u_0}$ is a contraction on $(X,d)$ with the
initial data condition
\begin{align}
\norm{u_0}_{\dot{H}^{s_c}}\leq \eta\ll 1.
\end{align}

First we show $\Phi_{u_0}:X\to X$. Since $q'=\frac{p+2}{p+1}, \
r'=\frac{2n(p+2)}{2(n+\sigma)+np}$, then it is easy to see that
\[\frac{1}{q'}=\frac{1}{q}+\frac{1}{pq},\quad \frac{1}{r'}=\frac{1}{r}+\frac{2\sigma}{n(p+2)}.\]
Then by Proposition \ref{prop:StriFSch}, fractional chain rule Lemma
\ref{lem:Fchain} and Sobolev embedding, we find that for $u\in X$,
\begin{align*}
\|\Phi_{u_0}(u)\|_{L^\infty_tH^{s_c}_x(I\times \R^n)}\le &
\|u_0\|_{H^{s_c}_x} + C(n) \|\langle\nabla\rangle^{s_c}
(|u|^p)u\|_{L^{q'}_tL^{r'}_x}\\
\le & \|u_0\|_{H^{s_c}_x} + C(n) \|\langle\nabla\rangle^{s_c}
u\|_{L^{q}_tL^{r}_x}\| u\|^p_{L^{q}_tL^{\frac{np(p+2)}{
2\sigma}}_x}\\
\le & \|u_0\|_{H^{s_c}_x} + C(n) (2\eta + 2C(n) \|u_0\|_{L^2_x}
)\||\nabla|^{s_c}u\|^p_{L^{q}_t L^{r}_x}\\
\le & \|u_0\|_{H^{s_c}_x} + C(n) (2\eta + 2C(n) \|u_0\|_{L^2_x}
)(2\eta)^p
\end{align*}
and similarly,
\begin{align*}
\|\Phi_{u_0}(u)\|_{L^q_tL^{r}_x}\le & C(d)\|u_0\|_{L^2_x} + C(d) \|(|u|^p)u\|_{L^{q'}_tL^{r'}_x}\\
\le & C(d)\|u_0\|_{L^2_x} + 2C(d)^2 \|u_0\|_{L^{2}_x}(2\eta)^{p},
\end{align*}
and
\begin{align*}
\||\nabla|^{s_c}\Phi_{u_0}(u)\|_{L^q_tL^{r}_x} \le
&\||\nabla|^{s_c}S_\sigma(t)u_0\|_{L^q_tL^{r}_x}
+ C(n) (2\eta)^{p+1}\\
\le & C(n)\eta+ C(n) (2\eta)^{p+1}.
\end{align*}
Thus, choosing $\eta_0=\eta_0(n)$ sufficiently small, we see that
for $0<\eta\leq \eta_0$, the functional $\Phi_{u_0}$ maps the set
$X$ back to itself. To see that $\Phi_{u_0}$ is a contraction, we
repeat the computations above and get for $u,v\in X$
\begin{align*}
\|\Phi_{u_0}(u)-\Phi_{u_0}(v)\|_{L^q_tL^{r}_x}\le & C(d) \|(|u|^p)u - (|v|^p)v\|_{L^{q'}_tL^{r'}_x}\\
\le & C(d) (2\eta)^{p} \|u-v\|_{L^{q}_tL^r_x}.
\end{align*}
Thus for $\eta$ sufficiently, the map $\Phi_{u_0}$ is a contraction.
By the contraction mapping theorem, it follows that $\Phi_{u_0}$ has
a fixed point in $X$. The rest of part (1) (e.g. the uniqueness)
follows from standard arguments \cite{Visan}.

Next, to show part (2), we see that since $q\neq \infty$, then
\[
\lim_{T\rightarrow 0}\norm{|\nabla|^{s_c} S_\sigma(t)u_0}_{L_{t\in
[-T,T]}^qL_x^r}=0.
\]
Then part (2) follows from standard fixed-point argument too.
\end{proof}

Using the similar arguments above, and in view of the conservation
laws, it is not difficult to prove the following corollary for which
we do not give the proof.

\begin{corollary}[$H^s$ subcritical]
Assume $n\geq 2$, $\frac{2n}{2n-1}<\sigma<2$ and $u_0$ is radial.
Then for $0<p<\frac{2\sigma}{n}$, the IVP \eqref{eq:FNLS} is
globally well-posed if $u_0\in L^2$; and for $\frac{2\sigma}{n}\leq
p<\frac{2\sigma}{n-2\sigma}$, the IVP \eqref{eq:FNLS} is locally
well-posed (globally well-posed in the defocusing case) if $u_0\in
H^{\sigma/2}$.
\end{corollary}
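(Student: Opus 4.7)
The plan is to mimic the contraction-mapping argument used for Theorem \ref{thm:FSchwp}, but now to work in the subcritical space ($L^2$ in the first regime, $H^{\sigma/2}$ in the second) rather than in the scaling-invariant $\dot{H}^{s_c}$. Because the data sit at strictly higher regularity than the scaling-critical one, the H\"older step in the time variable yields a small factor $T^{\theta}$ with some $\theta>0$, which closes the fixed point on a short interval $[-T,T]$ with $T=T(\|u_0\|)$. The conclusion is then upgraded to a global solution using the appropriate conservation law (mass or energy).

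In the mass-subcritical regime $0<p<2\sigma/n$ with $u_0\in L^2$, I would apply Proposition \ref{prop:StriFSch} with $\gamma=0$ on an $n$-D radial Schr\"odinger-admissible pair $(q,r)$ of the form $(q,r)=(p+2,\tfrac{2n(p+2)}{2(n-\sigma)+np})$ (admissible thanks to the assumption $\sigma>2n/(2n-1)$). I would then choose a dual pair $(\tilde q,\tilde r)$ satisfying $(p+1)\tilde r'=r$ and $\sigma/\tilde q+n/\tilde r=n/2$, but now with $\tilde q'<(p+1)q'$ so that H\"older in $t$ gains a factor $T^\theta$. A standard fixed-point argument in $L^q_tL^r_x\cap L^\infty_tL^2_x$ provides local well-posedness with $T=T(\|u_0\|_{L^2})$, and conservation of the $L^2$ norm extends the solution to all of $\R$.

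In the regime $2\sigma/n\le p<2\sigma/(n-2\sigma)$ with $u_0\in H^{\sigma/2}$ I would carry $\sigma/2$ derivatives throughout: choose the subcritical analogs of the above pairs, apply Proposition \ref{prop:StriFSch} with $\gamma=\sigma/2$, and estimate the Duhamel term in $L^\infty_t H^{\sigma/2}_x\cap L^q_t W^{\sigma/2,r}_x$ by combining the fractional chain rule (Lemma \ref{lem:Fchain}) with a Sobolev embedding $\dot W^{\sigma/2,r}\hookrightarrow L^{\rho}$ for a suitable $\rho$. The strict inequality $s_c<\sigma/2$ again produces a factor $T^\theta$, giving local well-posedness. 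In the defocusing case $\mu=-1$, the conserved energy $E(u)=\tfrac{1}{2}\||\nabla|^{\sigma/2}u\|_{L^2}^2+\tfrac{1}{p+2}\|u\|_{L^{p+2}}^{p+2}$, together with mass conservation and Sobolev embedding in the energy-subcritical range, produces a uniform bound on $\|u(t)\|_{H^{\sigma/2}}$, so the local solutions iterate to a global one.

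The main obstacle is purely bookkeeping: one must verify that for every $p$ in each stated interval there exist pairs $(q,r)$ and $(\tilde q,\tilde r)$ simultaneously satisfying (i) the $n$-D radial Schr\"odinger-admissibility of Definition \ref{def:nDSch}, (ii) the gap conditions of Proposition \ref{prop:StriFSch}, (iii) the nonlinear matching $(p+1)\tilde r'=r$ with strict subcriticality (the source of the $T^\theta$ gain), and (iv) in the second regime, the Sobolev embedding required after the fractional chain rule. The hypothesis $\sigma>2n/(2n-1)$ is precisely what enlarges the admissible region enough to accommodate $(p+2,\cdot)$; the delicate point is to check this feasibility uniformly as $p$ approaches the respective upper endpoints $2\sigma/n$ and $2\sigma/(n-2\sigma)$.
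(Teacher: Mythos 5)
The paper omits the proof of this corollary, saying only that it follows from ``similar arguments'' to Theorem \ref{thm:FSchwp} together with the conservation laws. Your high-level plan --- a contraction argument in a subcritical space yielding a $T^{\theta}$ gain, followed by globalisation via mass or energy conservation --- is precisely what is intended, and the sketch of the $H^{\sigma/2}$ regime is fine in outline (with the caveat that one should carry the $\sigma/2$ derivatives explicitly through the fractional chain rule while invoking Proposition \ref{prop:StriFSch} at $\gamma=0$, as in Theorem \ref{thm:FSchwp}, rather than using the smoothing $\gamma=\sigma/2$ estimate as you wrote; mixing the two is redundant and harder to close).

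There is, however, a concrete error in your choice of exponents in the mass-subcritical regime $0<p<2\sigma/n$. You keep the $\gamma=0$ pair $(q,r)=(p+2,\tfrac{2n(p+2)}{2(n-\sigma)+np})$ from the critical argument and then try to find a dual pair with $(p+1)\tilde r'=r$ and $\sigma/\tilde q+n/\tilde r=n/2$. For $p<2\sigma/n$ this forces $\tilde r'>2$, hence $\tilde r<2$, so $(\tilde q,\tilde r)$ is not admissible; worse, $\tilde q$ comes out negative. For instance as $p\to 0$ one has $r\to \tfrac{2n}{n-\sigma}$, hence $\tilde r'\to r$, $\tilde r\to \tfrac{2n}{n+\sigma}<2$, and $\tfrac{\sigma}{\tilde q}=\tfrac{n}{2}-\tfrac{n}{\tilde r}=-\tfrac{\sigma}{2}$, i.e.\ $\tilde q=-2$. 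The fix is to set $r=p+2$ rather than $q=p+2$: take $(q,r)=\bigl(\tfrac{2\sigma(p+2)}{np},\,p+2\bigr)$ and $(\tilde q,\tilde r)=(q,r)$. Then the $\gamma=0$ gap condition holds, $(p+1)r'=r$ is automatic, and $(p+1)q'<q$ is equivalent to $q>p+2$, i.e.\ to $p<2\sigma/n$ --- which is exactly the subcriticality producing the $T^{\theta}$ factor via H\"older in time. (At $p=2\sigma/n$ this choice reduces to the one used in Theorem \ref{thm:FSchwp}.) Together with the observation, already used in the paper, that $\sigma>\tfrac{2n}{2n-1}$ guarantees $n$-D radial Schr\"odinger-admissibility of every $\gamma=0$ pair with $q\ge 2$, the rest of your argument then closes as you describe.
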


Indeed, we can prove some other subtle well-posedness results. We
can also go below $L^2$, as long as $\sigma$ is close to $2$.
However, we do not pursue this. On the other hand, in the
$H^s$-critical case, we assumed $u_0\in H^{s_c}$ instead of $u_0\in
\dot{H}^{s_c}$ as in the work of Cazenave and Weissler \cite{CaWe}.
This makes the proof much simpler \cite{Visan}. We will address this
in our consequent works which will concern the large data scattering
theory for \eqref{eq:FNLS}.

\noindent{\bf Acknowledgment.} The authors are very grateful to
Victor Lie for helpful discussion. The endpoint radial Strichartz
estimate for the Schr\"odinger equation is out of our discussion.
The authors also thank Prof. Kenji Nakanishi for precious comments
and suggestions. This material is based upon work supported by the
National Science Foundation under agreement No. DMS-0635607 and The
S. S. Chern Fund. Any opinions, findings and conclusions or
recommendations expressed in this material are those of the authors
and do not necessarily reflect the views of the National Science
Foundation or The S. S. Chern Fund.

\footnotesize

\end{document}